\newtheorem{defnt}{Definition}
\newtheorem{lemma}{Lemma}
\newtheorem{remk}{Remark}
\newtheorem{thm}{Theorem}
\journal{}
\begin{document}
\begin{frontmatter}
\title{Lie Bracket Approximation of Extremum Seeking Systems}
\author[unist]{Hans-Bernd D\"urr}
\author[KTH]{Milo\v{s} S. Stankovi\'{c}}
\author[unist]{Christian Ebenbauer}
\author[KTH]{Karl Henrik Johansson}

\address[unist]{Institute for Systems Theory and Automatic Control, University of Stuttgart, Germany; (e-mail: {hans-bernd.duerr,ce}@ist.uni-stuttgart.de)}  
\address[KTH]{ACCESS Linnaeus Center, School of Electrical Engineering, KTH Royal Institute of Technology, 100 44 Stockholm, Sweden;
(e-mail: {milsta,kallej}@kth.se)}             

\begin{abstract}
Extremum seeking feedback is a powerful method to steer a dynamical system to an extremum of a partially or completely unknown map. It often requires advanced system-theoretic tools to understand the qualitative behavior of extremum seeking systems. In this paper, a novel interpretation of extremum seeking is introduced. We show that the trajectories of an extremum seeking system can be approximated by the trajectories of a system which involves certain Lie brackets of the vector fields of the extremum seeking system. It turns out that the Lie bracket system directly reveals the optimizing behavior of the extremum seeking system. Furthermore, we establish a theoretical foundation and prove that uniform asymptotic stability of the Lie bracket system implies practical uniform asymptotic stability of the corresponding extremum seeking system. We use the established results in order to prove local and semi-global practical uniform asymptotic stability of the extrema of a certain map for multi-agent extremum seeking systems.
\end{abstract}
\end{frontmatter}

\section{Introduction}
In diverse engineering applications one faces the problem of finding an extremum of a map without knowing its explicit analytic expression. 
Suppose, for example, one vehicle tries to minimize the distance to another vehicle. The only information available, is the distance to the other vehicle. Clearly, the distance does not provide a direction in which the vehicle has to move. However, it is intuitively clear that one can obtain a direction by using multiple measurements of the distance. 
Extremum seeking feedback exploits this procedure in a systematic way and can be used for steering dynamical systems to the extremum of an unknown map. Extremum seeking has a long history and has found many applications to diverse problems in control and communications (see \cite{5572972} and references therein). 


In this paper, we provide a novel methodology to analyze extremum seeking systems which differs from commonly used techniques (see e.g. \cite{Sanders:2007uq}).
Specifically, this work contains three main contributions. 

First, we provide a novel view on extremum seeking by identifying the sinusoidal perturbations in the extremum seeking system as artificial inputs and by writing it in a certain input-affine form. Based on this input-affine form, we derive an approximate system which captures the behavior of the trajectories of the original extremum seeking system. It turns out that the approximate system can be represented by certain Lie brackets of the vector fields in the extremum seeking system. We call this approximate system the Lie bracket system. The proposed methodology is different from results in the existing literature (see e.g. \cite{Krstic:2003fk}, \cite{Krstic:2000kx} and \cite{nesic}). 

Second, we establish a theoretic foundation which is based on this novel viewpoint. We prove that the trajectories of a class of input-affine systems with certain inputs are approximated by the trajectories of the Lie bracket systems. Similar results concerning sinusoidal inputs are covered in \cite{Kurzweil:1987fk} and were extended in \cite{220059}, \cite{573210} to the class of periodic inputs. In \cite{Sussmann:1991ys} and \cite{H.-J.-Sussmann:1993vn} convergence of trajectories of a class of input-affine systems to the trajectories of more general Lie bracket systems was established. These results are closely related to our results. Furthermore, we prove under mild assumptions that semi-global (local) practical uniform asymptotic stability of a class of input-affine systems follows from global (local) uniform asymptotic stability of the corresponding Lie bracket systems. These results are based on \cite{871771} and \cite{Moreau:2003fk}. Summarizing, to the authors best knowledge, the generality of the setup proposed herein was not addressed in the literature before.

Third, we apply the established results to analyze the behavior and the stability properties of extremum seeking vehicles with single-integrator and unicycle dynamics and with static maps. 
We formulate a multi-agent setup consisting of extremum seeking systems where the individual nonlinear maps of the agents satisfy a certain relationship which assures the existence of a potential function. We use the established theoretical results to show that the set of extrema of the potential function is (locally or semi-globally) practically uniformly asymptotically stable for the multi-agent system. This multi-agent setup is strongly related to game theory and potential games (see \cite{Monderer1996124}). In the single-agent case, this potential function coincides with the individual nonlinear map.
Similar extremum seeking vehicles were analyzed in \cite{zagsk} and \cite{zsk} by using averaging theory (see \cite{Khalil} and \cite{Sanders:2007uq}). The authors proposed various extremum seeking feedbacks for different vehicle dynamics and provided a local stability analysis for quadratic maps.
Using sinusoidal perturbations with vanishing gains, the authors of \cite{5400504} and \cite{msds1} were able to extend these results to prove almost sure convergence in the case of noisy measurements  of the map.
In a slightly different setup the authors of \cite{nesic} considered feedbacks which stabilize the extremum of a scalar, dynamic input-output map and established semi-global practical stability of the overall system under some technical assumptions. 
Multi-agent extremum seeking setups which use similar game-theoretic approaches can be found in \cite{Stankovics:2010fk}, where the agents seek a Nash equilibrium (see \cite{nash:1951}). The authors proved almost sure convergence of the scheme but without explicit consideration of the global stability properties. A closely related result, which considers the local stability of Nash equilibrium seeking systems, can be found in \cite{krstic:tac}.



Preliminary results of this work were published in \cite{Durr:2011uq} and \cite{Durr:2011kx} where the main proofs were omitted. Moreover, the results in this paper are more general.

\subsection{Organization}
The remainder of this paper is structured as follows. In Section 2 we illustrate the main idea using a simple example. In Section 3 we present theoretical results which link the stability properties of an input-affine system to its Lie bracket system. In Section 4 we apply these results to analyze stability properties of multi-agent extremum seeking systems. Finally, in Section 5 we illustrate the results with examples and give a conclusion in Section 6. 
\subsection{Notation}
\label{sec:preliminaries}
$\mathbb{N}_0$ denotes the set of positive integers including zero. $\mathbb{Q}_{++}$ denotes the set of positive rational numbers. The intervals of real number are denoted by $(a,b)=\{x\in\mathbb{R}:a<x<b\}$, $[a,b)=\{x\in\mathbb{R}:a\leq x<b\}$ and $[a,b]=\{x\in\mathbb{R}:a\leq x\leq b\}$. Let $f:\mathbb{R}^n\times\mathbb{R}^m\to\mathbb{R}^k$, then we write $f(\cdot,y)$ if we consider $f$ as a function of the first argument only and for all $y\in\mathbb{R}^m$. We denote by $C^n$ with $n\in\mathbb{N}_0$ the set of $n$ times continuously differentiable functions and by $C^\infty$ the set of smooth function.
The norm $|\cdot|$ denotes the Euclidian norm. 
The Jacobian of a continuously differentiable  function $b \in C^1: \mathbb{R}^{n}\to\mathbb{R}^{m}$
is denoted by
\begin{equation*}
\frac{\partial b(x)}{\partial x} := \begin{bmatrix}
\frac{\partial b_{1}(x)}{\partial x_{1}} & \ldots &  \frac{\partial b_{1}(x)}{\partial x_{n}} \\
\vdots & \ddots & \vdots\\
\frac{\partial b_{m}(x)}{\partial x_{1}} & \ldots & \frac{\partial b_{m}(x)}{\partial x_{n}}
\end{bmatrix}
\end{equation*}
and the gradient of a continuously differentiable function $J \in C^1:
\mathbb{R}^{n} \to \mathbb{R}$ is denoted by
$\nabla_{x}J(x) := \left[\frac{\partial J(x)}{\partial x_{1}},
\ldots, \frac{\partial J(x)}{\partial x_{n}}\right]^{\top}$. 
The Lie bracket of two vector fields $f,g : \mathbb{R}\times\mathbb{R}^n\to\mathbb{R}^n$ with $f(t,\cdot)$, $g(t,\cdot)$ being continuously differentiable is defined by $[f,g](t,x):= \frac{\partial g(t,x)}{\partial x}f(t,x) - \frac{\partial
f(t,x)}{\partial x}g(t,x)$. 
The $a$-neighborhood of a set $\mathcal{S}\subseteq\mathbb{R}^{n}$ with $a\in(0,\infty)$ is denoted by $\mathcal{U}^\mathcal{S}_a:=\{x \in \mathbb{R}^{n}:\inf_{y \in \mathcal{S}}|x-y|<a\}$. $\bar{\mathcal{U}}^\mathcal{S}_a$ denotes the closure of $\mathcal{U}^\mathcal{S}_a$. 
A function $u:\mathbb{R}\to\mathbb{R}$ is called measurable if it is Lebesgue-measurable. We use $s\in\mathbb{C}$ for the complex variable of the Laplace transformation if not indicated otherwise. 
\section{Main Idea}
One simple extremum seeking feedback for static maps is shown in Fig. \ref{fig:singleintsimp} (see also \cite{Krstic:2003fk} and \cite{zsk}).
Suppose that the function $f\in C^2:\mathbb{R}\to\mathbb{R}$ admits a local, strict maximum at $x^*$ and $\alpha,\omega\in(0,\infty)$. 
\begin{figure}[htpb]
\centering
\includegraphics[width=150px]{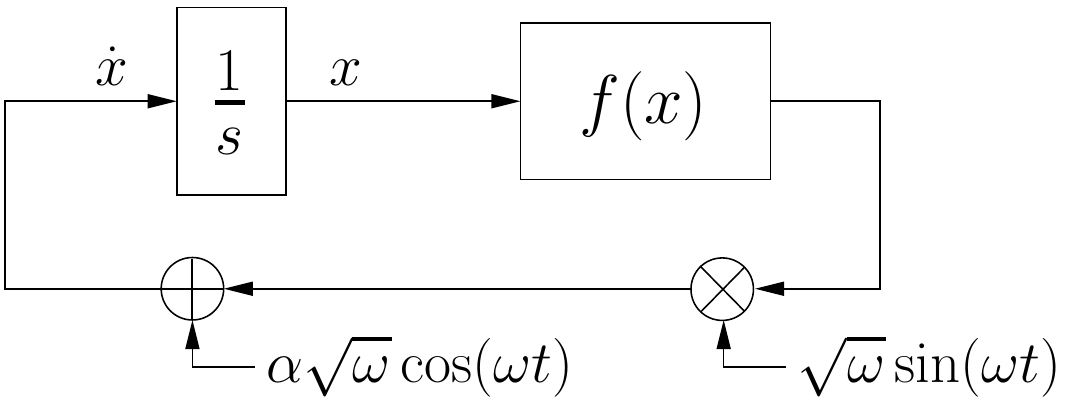}
\caption{Basic extremum seeking system}
\label{fig:singleintsimp}
\end{figure}

The extremum seeking system can be written as 
\begin{equation} \label{eq:intro1}
\dot{x} = \alpha\sqrt{\omega}\cos(\omega t)+ f(x)\sqrt{\omega}\sin(\omega t).
\end{equation}
The main idea is now to identify $\sin(\omega t)$ and $\cos(\omega t)$ as artificial inputs, i.e. $u_1(\omega t):=\cos(\omega t)$ and $u_2(\omega t):=\sin(\omega t)$. Thus, we obtain an input-affine system of the form
\begin{equation}
\dot{x}=b_1(x)\sqrt{\omega}u_1(\omega t) + b_2(x)\sqrt{\omega}u_2(\omega t)
\end{equation}
with $b_1(x)=\alpha$ and $b_2(x)=f(x)$.
Interestingly, if one computes the so called Lie bracket system involving $[b_1,b_2]$, i.e.
\begin{equation}\label{eq:intro2}
\dot{z} = \frac{1}{2}[b_1,b_2](z) = \frac{\alpha}{2}\nabla_z f(z), 
\end{equation}
then one sees that this system maximizes $f$. Having in mind, that trajectories resulting from sinusoidal inputs in \eqref{eq:intro1} can be approximated by trajectories of \eqref{eq:intro2} (see \cite{220059}, \cite{Kurzweil:1987fk}, \cite{573210}, \cite{H.-J.-Sussmann:1993vn}) allows us to establish a novel methodology to analyze extremum seeking systems.


The goal of this paper is to generalize this viewpoint to a larger class of extremum seeking systems. We derive a methodology which allows to analyze a broad class of extremum seeking systems by calculating their respective Lie bracket systems. The procedure can be summarized as follows: Write the extremum seeking system in input-affine form, calculate its corresponding Lie bracket system and prove asymptotic stability of the Lie bracket system which implies practical asymptotic stability for the extremum seeking system. 
\section{Lie Bracket Approximation for a Class of Input-Affine Systems}
In this section we consider a class of input-affine systems depending on a parameter and we deliver general results for approximating the trajectories of such systems by the trajectories of their respective Lie bracket systems. First, we state the definition of practical stability of a compact, invariant set for this class of systems. Second, we prove that their trajectories are approximated by the trajectories of their corresponding Lie bracket system for large values of the parameter. Third, we show how the stability properties of the input-affine system and the Lie bracket system are linked. The results in this section rely on a combination of results in \cite{220059}, \cite{Kurzweil:1987fk}, \cite{573210}, \cite{H.-J.-Sussmann:1993vn} and \cite{871771}, \cite{Moreau:2003fk}.

\subsection{Practical Stability}
In the following, we define the notion of practical stability which is closely related to Lyapunov stability and applies to differential equations depending on a parameter. Throughout the paper, we denote this parameter as $\omega$. For related literature about this concept we refer to \cite{871771}, \cite{nesic}, \cite{758492} and references therein. 

Let $x(\cdot):=x(\cdot;t_0,x_0,\omega)$ denote the solution of the differential equation
\begin{equation}
\label{eq:syspguas}
\dot{x}=f_\omega(t,x)
\end{equation}
through $x(t_0)=x_0$, where the vector field $f_\omega: \mathbb{R}\times\mathbb{R}^n\to\mathbb{R}^n$ depends on $\omega\in(0,\infty)$. 
\begin{defnt}
A compact set $\mathcal{S}\subseteq\mathbb{R}^n$ is said to be \textbf{practically uniformly stable} for \eqref{eq:syspguas} if for every $\epsilon\in(0,\infty)$ there exists a $\delta\in(0,\infty)$ and  $\omega_0\in(0,\infty)$ such that for all $t_0\in\mathbb{R}$ and for all  $\omega\in(\omega_0,\infty)$
\begin{equation}
\label{eq:pus}
x(t_0)\in \mathcal{U}_\delta^\mathcal{S} \Rightarrow x(t)\in \mathcal{U}_\epsilon^\mathcal{S}, t\in[t_0,\infty).
\end{equation}
\end{defnt}
\begin{defnt}
Let $\delta\in(0,\infty)$. A compact set $S\subseteq\mathbb{R}^n$ is said to be  \textbf{$\delta$-practically uniformly attractive} for \eqref{eq:syspguas} if for every $\epsilon\in(0,\infty)$ there exists a $t_f\in[0,\infty)$ and $\omega_0\in(0,\infty)$ such that for all $t_0\in\mathbb{R}$ and all $\omega\in(\omega_0,\infty)$
\begin{equation}
x(t_0)\in \mathcal{U}_\delta^\mathcal{S} \Rightarrow x(t)\in \mathcal{U}_\epsilon^\mathcal{S}, t\in[t_0+t_f,\infty).
\end{equation} 
\end{defnt}
\begin{defnt}
A compact set $\mathcal{S}\subseteq\mathbb{R}^n$ is said to be \textbf{locally practically uniformly asymptotically stable} for \eqref{eq:syspguas} if it is practically uniformly stable and there exists a $\delta\in(0,\infty)$ such that it is $\delta$-practically uniformly attractive.
\end{defnt}
\begin{defnt}
Let $\mathcal{S}\subseteq\mathbb{R}^n$ be a compact set. The solutions of \eqref{eq:syspguas} are said to be \textbf{practically uniformly bounded} if for every $\delta\in(0,\infty)$ there exists an $\epsilon\in(0,\infty)$ and $\omega_0\in(0,\infty)$ such that for all $t_0\in\mathbb{R}$ and for all $\omega\in(\omega_0,\infty)$
\begin{equation}
x(t_0)\in \mathcal{U}_\delta^\mathcal{S} \Rightarrow x(t)\in \mathcal{U}_\epsilon^\mathcal{S}, t\in[t_0,\infty).
\end{equation}
\end{defnt}
\begin{defnt}
A compact set $\mathcal{S}\subseteq\mathbb{R}^n$ is said to be \textbf{semi-globally practically uniformly asymptotically stable} for \eqref{eq:syspguas} if it is practically uniformly stable and for every $\delta\in(0,\infty)$ it is $\delta$-practically uniformly attractive. Furthermore the solutions of \eqref{eq:syspguas} must be practically uniformly bounded.
\end{defnt}

When \eqref{eq:syspguas} is independent of $\omega$ we omit the term ``practically'' in Definitions 1 -- 5 as well as ``semi'' in Definition 5. In this case, they are equivalent to the notion of stability in the sense of Lyapunov, we refer to e.g. \cite{Khalil}, \cite{871771} and \cite{Hale:1969kx}.
\subsection{Lie Bracket Approximation}
Throughout the paper, we consider the class of input-affine systems which can be written in the following form 
\begin{equation}\label{eq:input_affine_system}
\begin{split}
\dot{x} &= b_0(t,x)+\sum_{i=1}^{m}b_{i}(t,x)\sqrt{\omega}u_{i}(t,\omega t)
\end{split}
\end{equation}
with $x(t_0)=x_0\in\mathbb{R}^n$ and $\omega \in (0,\infty)$. Next, we define a differential equation, which we call the \emph{Lie bracket system} corresponding to \eqref{eq:input_affine_system}
\begin{equation}
\begin{split}
\dot{z} = b_0(t,z) +
\sum_{\substack{i=1\\j=i+1}}^m [b_{i},b_{j}](t,z)\nu_{ji}(t)
\label{eq:liebracket_system}
\end{split}
\end{equation}
with
\begin{equation}
\label{eq:nu_ij}
\nu_{ji}(t) = \frac{1}{T}\int_{0}^{T}u_{j}(t,\theta)\int_{0}^{\theta}u_{i}(t,\tau)d\tau
d\theta.
\end{equation}

\begin{remk}
If $u_i$ can be decomposed as $u_i(t,\omega t)=r_i(t)\tilde{u}_i(\omega t)$, $i=1,\ldots,m$, then \eqref{eq:input_affine_system} yields  $\dot{x} = b_0(t,x)$ $+\sum_{i=1}^{m}\tilde{b}_{i}(t,x)\sqrt{\omega}\tilde{u}_{i}(\omega t)$ with $\tilde{b}_i(t,x) = b_i(t,x)r_i(t)$. 
This is the usual setup in the existing literature (see e.g. \cite{Kurzweil:1987fk}, \cite{H.-J.-Sussmann:1993vn}). 
\end{remk}
We impose the following assumptions on $b_i$ and $u_i$: 
\begin{enumerate}[{A}1]
\item\label{ass:a1} $b_i \in C^2:\mathbb{R}\times\mathbb{R}^n\to\mathbb{R}^n$, $i=0,\ldots,m$.
\item\label{ass:a2} For every compact set $\mathcal{C}\subseteq\mathbb{R}^n$ there exist $A_1,\ldots,A_6\in[0,\infty)$ such that
$|b_i(t,x)|\leq A_1$, $|\frac{\partial b_i(t,x)}{\partial t}|\leq A_2$, $|\frac{\partial b_i(t,x)}{\partial x}|\leq A_3$, $|\frac{\partial^2 b_j(t,x)}{\partial t\partial x}|\leq A_4$, $|\frac{\partial [b_j,b_k](t,x)}{\partial x}|\leq A_5$, $|\frac{\partial [b_j,b_k](t,x)}{\partial t}|\leq A_6$
for all $x\in \mathcal{C}$, $t\in\mathbb{R}$, $i=0,\ldots,m$, $j=1,\ldots,m$, $k=j,\ldots,m$.
\item\label{ass:a3} $u_i:\mathbb{R}\times\mathbb{R}\to\mathbb{R}$, $i=1,\ldots,m$ are measurable functions. Moreover, for every $i=1,\ldots,m$ there exist constants $L_i,M_i\in(0,\infty)$ such that $|u_i(t_1,\theta)-u_i(t_2,\theta)| \leq L_i|t_1-t_2|$ for all $t_1,t_2\in\mathbb{R}$ and such that $\sup_{t,\theta\in\mathbb{R}}|u_i(t,\theta)|\leq M_i$. 
\item\label{ass:a4} $u_i(t,\cdot)$ is $T$-periodic, i.e. $u_i(t,\theta + T) = u_i(t,\theta)$, and has zero average, i.e. $\int_{0}^T u_i(t,\tau)d\tau = 0$, with $T\in(0,\infty)$ for all $t,\theta\in\mathbb{R}$, $i=1,\ldots,m$. 
\end{enumerate}
\begin{remk}
Assumption A1 is a regularity assumption on the vector fields, which are usually assumed to be smooth in the case of extremum seeking systems (see \cite{Krstic:2000kx} and \cite{nesic}).
\end{remk}
\begin{remk}
Assumption A2 means that expressions involving  $b_i$, $i=0,\ldots,m$ and their derivatives must be bounded uniformly in $t$. 
A similar assumption was made in Eq. (2.2), Section 2 in \cite{Kurzweil:1987fk}.
\end{remk}
\begin{remk}
Assumption A3 imposes measurability on $u_i$, $i=1,\dots,m$ which is necessary to establish existence of solutions of \eqref{eq:input_affine_system} (see Theorem \ref{thm:exuni} in Appendix \ref{app:exuni}). Alternatively, one could impose that the inputs $u_i$, $i=1,\ldots,m$ are continuous functions and argue using the existence and uniqueness theorem of Picard-Lindel\"of (see \cite{Coddington:1955fk}). However, this does not cover the case of piecewise continuous inputs, which might be interesting in certain applications, i.e. replacing the sinusoids with piecewise constant functions in the extremum seeking systems. 
\end{remk}
\begin{remk}
Similarly as in \cite{220059} we impose in Assumption A4 the $T$-periodicity and zero average of $u_i$, $i=1,\ldots,m$, which is common in the averaging literature but also in the literature dealing with Lie brackets. 
\end{remk}

Finally, we introduce a set $\mathcal{B}$ of initial conditions for \eqref{eq:liebracket_system} which have uniformly bounded solutions, i.e. there exists an $A\in(0,\infty)$ such that for all $t_0\in\mathbb{R}$ we have that
\begin{equation}\label{eq:defB}
z(t_0)\in\mathcal{B}\Rightarrow z(t)\in\mathcal{U}^0_A, t\in[t_0,\infty). 
\end{equation}
$\mathcal{B}$ is used in the proof of the main theorems and is crucial in order to assure existence of trajectories uniformly in $t_0$.

In the following, we state the main theorems which link stability properties of the systems in
\eqref{eq:input_affine_system} and \eqref{eq:liebracket_system}. The first theorem states that trajectories of \eqref{eq:input_affine_system} are approximated by trajectories of \eqref{eq:liebracket_system}. 
Related results are presented in \cite{220059}, \cite{573210} and \cite{758492}. 
However, we show for a larger class of inputs that the time interval of approximation can be made arbitrary large by choosing $\omega$ sufficiently large. We extend this result to infinite time-intervals and prove that the semi-global (local) practical uniform asymptotic stability of the input-affine system \eqref{eq:input_affine_system} follows from the global (local) uniform asymptotic stability of the corresponding Lie bracket system \eqref{eq:liebracket_system}. 
These results are stated in the second and third theorem which are similar to results in \cite{871771}. 
\begin{thm}
\label{thm:traj_approx}
Let Assumptions A\ref{ass:a1}--A\ref{ass:a4} be satisfied. Then for every bounded set $\mathcal{K}\subseteq \mathcal{B}$ with $\mathcal{B}$ as in \eqref{eq:defB}, for every $D\in(0,\infty)$ and for every $t_f\in(0,\infty)$, there exists an $\omega_0\in(0,\infty)$ such that for every $\omega\in(\omega_0,\infty)$, for every $t_0\in\mathbb{R}$ and every $x_0\in \mathcal{K}$ there exist solutions $x$ and $z$ of \eqref{eq:input_affine_system} and \eqref{eq:liebracket_system}  through $x(t_0)=z(t_0)=x_0$ which satisfy
\begin{equation}
|x(t)-z(t)|<D, \;\;\; t\in[t_0,t_0+t_f].
\end{equation}
\end{thm}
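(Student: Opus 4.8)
The plan is to remove the fast oscillations $\sqrt{\omega}\,u_i(t,\omega t)$ by a near-identity change of coordinates, to show that the averaged dynamics so obtained \emph{is} the Lie bracket system \eqref{eq:liebracket_system}, and then to compare the two by a Gronwall estimate on the bounded horizon $[t_0,t_0+t_f]$. First I would fix the compact set on which all estimates live: since $\mathcal{K}\subseteq\mathcal{B}$, the solution $z$ of \eqref{eq:liebracket_system} with $z(t_0)=x_0\in\mathcal{K}$ satisfies $z(t)\in\mathcal{U}^0_A$ for all $t\ge t_0$, \emph{uniformly in} $t_0$; set $\mathcal{C}:=\bar{\mathcal{U}}^0_{A+1}$ and let $A_1,\dots,A_6$ be the constants from A\ref{ass:a2} associated with $\mathcal{C}$. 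Existence and absolute continuity of solutions $x$ of \eqref{eq:input_affine_system} follows from the Carath\'eodory-type existence theorem in Appendix~\ref{app:exuni} (using A\ref{ass:a1}, A\ref{ass:a3}), and existence of $z$ from A\ref{ass:a1}--A\ref{ass:a2}.

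Next, introduce the primitives $U_i(t,\theta):=\int_0^\theta u_i(t,\tau)\,d\tau$; by A\ref{ass:a4} each $U_i(t,\cdot)$ is $T$-periodic with $U_i(t,0)=U_i(t,T)=0$, and by A\ref{ass:a3} it is bounded (by $TM_i$) and Lipschitz in $t$. Consider the transformation $x=\xi+\tfrac{1}{\sqrt{\omega}}\sum_{i=1}^m b_i(t,\xi)\,U_i(t,\omega t)$, which for $\omega$ large (depending only on $A_3,T,M_i$) is a $C^1$-diffeomorphism of $\mathcal{C}$ with inverse $O(1/\sqrt{\omega})$-close to the identity. Differentiating this identity along solutions, using $\partial_\theta U_i=u_i$, expanding $b_0(t,x)$ and $b_i(t,x)$ to first order about $\xi$ (legitimate by A\ref{ass:a1}, with uniform-on-$\mathcal{C}$ remainders controlled by A\ref{ass:a2}), and inverting $I+\tfrac{1}{\sqrt{\omega}}\partial_\xi(\cdots)$, the $\sqrt{\omega}$-terms cancel by construction and one is left with
\[
\dot\xi \;=\; b_0(t,\xi)+g(t,\omega t,\xi)+R_1(t,\omega t,\xi,\omega),\qquad g(t,\theta,\xi):=\sum_{i,j=1}^m\frac{\partial b_i(t,\xi)}{\partial x}\,b_j(t,\xi)\,U_j(t,\theta)\,u_i(t,\theta),
\]
with $\sup_{t\in\mathbb{R},\,\xi\in\mathcal{C}}|R_1|\le C_1/\sqrt{\omega}$. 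The period-mean of $g$ is $\bar g(t,\xi):=\tfrac1T\int_0^T g(t,\theta,\xi)\,d\theta=\sum_{i,j}\tfrac{\partial b_i}{\partial x}b_j\,\nu_{ij}$, where I used $\tfrac1T\int_0^T U_j u_i\,d\theta=\nu_{ij}$. Here is the key algebraic identity: integrating $\tfrac{d}{d\theta}(U_iU_j)=u_iU_j+U_iu_j$ over $[0,T]$ and using $U_i(t,0)=U_i(t,T)=0$ gives $\nu_{ij}=-\nu_{ji}$ (so $\nu_{ii}=0$), whence $\bar g=\sum_{i<j}\nu_{ij}\bigl(\tfrac{\partial b_i}{\partial x}b_j-\tfrac{\partial b_j}{\partial x}b_i\bigr)=-\sum_{i<j}\nu_{ij}[b_i,b_j]=\sum_{i<j}\nu_{ji}(t)\,[b_i,b_j](t,\xi)$, which is precisely the non-drift part of \eqref{eq:liebracket_system}. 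Since $g-\bar g$ has zero period-mean and is Lipschitz in $t$ and $\xi$, a further near-identity correction $\xi\mapsto\xi+\tfrac1\omega\int_0^{\omega t}\bigl(g(t,s,\xi)-\bar g(t,\xi)\bigr)\,ds$ of order $1/\omega$ removes it, so after renaming we obtain a solution $\xi$ of $\dot\xi=b_0(t,\xi)+\bar g(t,\xi)+R(t,\omega t,\xi,\omega)$ with $\sup_{t,\,\xi\in\mathcal{C}}|R|\le C_2/\sqrt{\omega}$, and $|x(t)-\xi(t)|\le C_3/\sqrt{\omega}$ whenever $\xi(t)\in\mathcal{C}$; in particular $|\xi(t_0)-z(t_0)|=|\xi(t_0)-x_0|\le C_3/\sqrt{\omega}$.

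It remains to compare $\xi$ with $z$. On any subinterval of $[t_0,t_0+t_f]$ on which $\xi$ stays in $\mathcal{C}$, the vector field $b_0+\bar g$ is Lipschitz there with some constant $L$ (from A\ref{ass:a2}), $\xi$ solves $\dot w=b_0(t,w)+\bar g(t,w)$ up to the additive perturbation $R$, and $z$ solves it exactly; Gronwall's inequality then gives $|\xi(t)-z(t)|\le\bigl(C_3+t_fC_2\bigr)e^{Lt_f}/\sqrt{\omega}=:C_4/\sqrt{\omega}$. A standard continuation argument closes the loop: choose $\omega_0$ with $C_4/\sqrt{\omega_0}<\min\{1,D/2\}$ and $C_3/\sqrt{\omega_0}<D/2$; then on $[t_0,t_0+t_f]$ the inequality $|\xi-z|<1$ can never be violated (a violation would force $\xi$ to leave $\mathcal{C}$ while remaining within distance $1$ of the bounded curve $z$, a contradiction), so $\xi$, and hence $x$, exists on all of $[t_0,t_0+t_f]$, stays in $\mathcal{C}$, and $|x(t)-z(t)|\le|x(t)-\xi(t)|+|\xi(t)-z(t)|<D$. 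All of $C_1,\dots,C_4,L$ depend only on $\mathcal{C},t_f,m,T,\{A_k\},\{M_i\},\{L_i\}$ and not on $t_0$ or $x_0\in\mathcal{K}$, so the resulting $\omega_0$ is uniform, as required.

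The main obstacle is the bookkeeping in the second step: one must check that \emph{every} error term produced by the two coordinate changes — the Taylor remainders of $b_0$ and of the $b_i$, the $\partial_t$-contributions coming from the explicit $t$-dependence of $b_i$ and $U_i$, the order-$1/\omega$ terms, and the error in inverting $I+O(1/\sqrt{\omega})$ — is $O(1/\sqrt{\omega})$ \emph{uniformly in} $t\in\mathbb{R}$ and $\xi\in\mathcal{C}$. This is exactly what the uniform-in-$t$ bounds of Assumption A\ref{ass:a2} are designed to provide; in particular the bound on $\tfrac{\partial[b_j,b_k]}{\partial x}$, together with the bounds on $b_i$ and $\tfrac{\partial b_i}{\partial x}$, controls precisely the antisymmetric combinations of second derivatives of the $b_i$ that arise when the Lie bracket is generated, and it is also the reason for introducing $\mathcal{B}$, so that the common compact set $\mathcal{C}$ can be chosen independently of $t_0$. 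A secondary technical point is that, since the $u_i$ are merely measurable, one must verify that the transformed equations still satisfy Carath\'eodory conditions, so that the solutions and the almost-everywhere computations above are legitimate.
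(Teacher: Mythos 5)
Your proposal is correct in outline but takes a genuinely different route from the paper. You run the classical two-stage near-identity averaging transformation: first $x=\xi+\tfrac{1}{\sqrt{\omega}}\sum_i b_i(t,\xi)U_i(t,\omega t)$ to cancel the $\sqrt{\omega}$-terms and expose the period mean $\bar g=\sum_{i<j}\nu_{ji}[b_i,b_j]$ via the antisymmetry $\nu_{ij}=-\nu_{ji}$, then a second $O(1/\omega)$ correction to remove the zero-mean oscillation $g-\bar g$, followed by Gronwall. The paper never changes coordinates: it works with the integral representation of $x$, integrates the terms $b_i\sqrt{\omega}u_i$ by parts against the true primitive $U_i(t_0,t)=\int_{t_0}^t u_i(r,\omega r)\,dr$, regroups the resulting $O(\omega)$ products by adding and subtracting the transposed terms so that the brackets appear, and bounds five explicit remainders $R_1,\dots,R_5$ by $k/\sqrt{\omega}$ using the bespoke averaging estimates of Lemmas \ref{lem:average1}--\ref{lem:average3}, before closing with the same Gronwall-plus-tube contradiction argument you use. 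Your identity obtained from $\tfrac{d}{d\theta}(U_iU_j)=u_iU_j+U_iu_j$ is exactly the mechanism the paper uses implicitly when it splits off $R_3$ (the $i=j$ terms) and $R_4$ (the $j<i$ terms). Your route buys conceptual transparency (the Lie bracket system is literally the period average of the transformed equation); the paper's route confines all the delicate work with merely measurable, Lipschitz-in-$t$ inputs to Lemma \ref{lem:average3} and avoids differentiating $U_i(t,\omega t)$ in its first argument.

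Two points in your version need more care. First, your $U_i(t,\theta)=\int_0^\theta u_i(t,\tau)\,d\tau$ freezes the slow time, so computing $\tfrac{d}{dt}U_i(t,\omega t)$ along solutions requires a chain rule for a function that is only Lipschitz in $t$ and differentiable a.e.\ in $\theta$; you flag this, and it is handleable (via $T$-periodicity the Lipschitz constant of $U_i(\cdot,\theta)$ is uniform in $\theta$), but it is precisely the step the paper sidesteps by taking the primitive along the trajectory. Second, and more substantively, your error bookkeeping needs uniform-in-$t$ bounds on the individual second derivatives $\partial^2 b_i/\partial x^2$ (from the Taylor remainder multiplied by $\sqrt{\omega}$) and on $\partial_\xi\bigl(\tfrac{\partial b_i}{\partial x}b_j\bigr)$ for \emph{all} index pairs (from $\partial_\xi g$ in the second transformation, which is hit by $\dot\xi=O(1)$). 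Assumption A\ref{ass:a2} bounds, uniformly in $t$, only the antisymmetric combinations $\partial[b_j,b_k]/\partial x$, not the individual second derivatives; the paper's decomposition is arranged so that the second differentiation in the dominant remainder $R_5$ falls only on the brackets themselves. To stay strictly within A\ref{ass:a1}--A\ref{ass:a4} you should reorganize $g-\bar g$ into exact $\theta$-derivatives of $U_iU_j$ (whose coefficients are the symmetric products) plus bracket terms multiplied by zero-mean oscillations before performing the second correction, so that only bracket combinations are differentiated again.
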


The proof of Theorem \ref{thm:traj_approx} uses similar arguments as in B.3, p. 1941 in \cite{Moreau:2003fk} but we consider more general inputs, which are characterized by Assumptions A3 and A4. The proof can be found in Appendix \ref{pf:traj_approx}.


\begin{thm}
\label{thm:liesystem_loc}
Let Assumptions A\ref{ass:a1}--A\ref{ass:a4} be satisfied and suppose that a compact set $\mathcal{S}$ is locally uniformly asymptotically stable for \eqref{eq:liebracket_system}. Then $\mathcal{S}$ is locally practically uniformly asymptotically stable for \eqref{eq:input_affine_system}. 
\end{thm}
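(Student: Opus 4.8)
The plan is to combine the finite-time trajectory approximation of Theorem~\ref{thm:traj_approx} with a converse Lyapunov characterization of uniform asymptotic stability for the Lie bracket system \eqref{eq:liebracket_system}, and then to pass to the infinite horizon by concatenating the approximation over consecutive time intervals of a fixed length. Write $|x|_\mathcal{S}:=\inf_{y\in\mathcal{S}}|x-y|$. Under A\ref{ass:a1}--A\ref{ass:a4} the right-hand side of \eqref{eq:liebracket_system} is continuous in $t$, of class $C^1$ in $z$, and locally Lipschitz in $z$ uniformly in $t$: the brackets $[b_i,b_j]$ are $C^1$ by A\ref{ass:a1} and bounded with bounded $x$-derivatives on compacta by A\ref{ass:a2}, while the weights $\nu_{ji}$ in \eqref{eq:nu_ij} are bounded and Lipschitz in $t$ by A\ref{ass:a3}--A\ref{ass:a4}. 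Hence the hypothesis that $\mathcal{S}$ is locally uniformly asymptotically stable for \eqref{eq:liebracket_system} yields, by a converse Lyapunov theorem for compact attractors of time-varying systems (the line of work of \cite{871771}, \cite{Moreau:2003fk}), a radius $\rho\in(0,\infty)$, a $C^1$ function $V(t,x)$ on $\mathcal{U}^\mathcal{S}_\rho$ with $x$-derivative bounded on compacta uniformly in $t$, and class-$\mathcal{K}$ functions $\alpha_1,\alpha_2,\alpha_3$ such that $\alpha_1(|x|_\mathcal{S})\le V(t,x)\le\alpha_2(|x|_\mathcal{S})$ and $\dot V\le-\alpha_3(|x|_\mathcal{S})$ along solutions of \eqref{eq:liebracket_system}; equivalently there is $\beta\in\mathcal{KL}$ with $|z(t)|_\mathcal{S}\le\beta(|z(t_0)|_\mathcal{S},t-t_0)$ for every $z(t_0)\in\bar{\mathcal{U}}^\mathcal{S}_\rho$, $t\ge t_0$, uniformly in $t_0$. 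Shrinking $\rho$ so that solutions starting in $\bar{\mathcal{U}}^\mathcal{S}_\rho$ remain there, such solutions are uniformly bounded, so $\bar{\mathcal{U}}^\mathcal{S}_{\rho'}\subseteq\mathcal{B}$ for some $\rho'\in(0,\rho)$, which is what is needed to invoke Theorem~\ref{thm:traj_approx} with $\mathcal{K}\subseteq\bar{\mathcal{U}}^\mathcal{S}_{\rho'}$.

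Next, fix $\epsilon\in(0,\infty)$; without loss of generality $\epsilon\le\rho'$. Using the Lyapunov bounds I would pick $c\in(0,\infty)$ small enough that $\bar{\mathcal{U}}^\mathcal{S}_{\alpha_1^{-1}(c)}\subseteq\mathcal{U}^\mathcal{S}_\epsilon$ and $\alpha_2^{-1}(c)\le\rho'$, together with some $c'\in(0,c)$; since $\dot V\le-\alpha_3(\alpha_2^{-1}(V))$ there is a finite $t_f$, independent of $t_0$, such that every solution $z$ of \eqref{eq:liebracket_system} with $V(t_0,z(t_0))\le c$ satisfies $V(t,z(t))\le c$ for all $t\ge t_0$ and $V(t,z(t))\le c'$ for $t\ge t_0+t_f$. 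By uniform continuity of $V$ in $x$ on the compact set $\{x:\alpha_1(|x|_\mathcal{S})\le c\}$ I can choose $D\in(0,\infty)$ so small that $|x-z|<D$ and $V(t,z)\le c'$ force $V(t,x)\le c$, and $V(t,z)\le c$ forces $x\in\mathcal{U}^\mathcal{S}_\epsilon$. Applying Theorem~\ref{thm:traj_approx} with $\mathcal{K}=\{x:\alpha_2(|x|_\mathcal{S})\le c\}$, this $D$ and this $t_f$ gives an $\omega_0$ such that for $\omega\in(\omega_0,\infty)$ and any $x_0$ with $\alpha_2(|x_0|_\mathcal{S})\le c$ (hence $V(t_0,x_0)\le c$), the solution $x$ of \eqref{eq:input_affine_system} through $x(t_0)=x_0$ stays within $D$ of the companion Lie bracket solution on $[t_0,t_0+t_f]$, so $x(t)\in\mathcal{U}^\mathcal{S}_\epsilon$ there and $V(t_0+t_f,x(t_0+t_f))\le c$. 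Since the end point again satisfies the $V\le c$ bound, the same $\omega_0$ and the same argument apply on $[t_0+t_f,t_0+2t_f]$ and, inductively, on every interval $[t_0+kt_f,t_0+(k+1)t_f]$, $k\in\mathbb{N}_0$. This shows $x(t)\in\mathcal{U}^\mathcal{S}_\epsilon$ for all $t\ge t_0$ whenever $\alpha_2(|x_0|_\mathcal{S})\le c$; choosing $\delta\in(0,\infty)$ with $\alpha_2(\delta)\le c$ gives practical uniform stability of $\mathcal{S}$ for \eqref{eq:input_affine_system}.

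For $\delta$-practical uniform attractivity I would fix $\delta$ as above and, given $\epsilon\in(0,\infty)$, build a finite chain $c=d_0>d_1>\cdots>d_N>0$ with $\bar{\mathcal{U}}^\mathcal{S}_{\alpha_1^{-1}(d_N)}\subseteq\mathcal{U}^\mathcal{S}_\epsilon$ such that the flow of \eqref{eq:liebracket_system} maps $\{x:V(t_0,x)\le d_k\}$ into $\{x:V(t_0+t_f,x)\le d_{k+1}\}$ within one step $t_f$ (possible because the $V$-decay rate is bounded away from zero on each annular region, uniformly in $t_0$). Choosing $D$ so small that the $D$-fattening of $\{V\le d_{k+1}\}$ lies in $\{V\le d_k\}$ for every $k$, and applying Theorem~\ref{thm:traj_approx} successively on the $N$ intervals $[t_0+kt_f,t_0+(k+1)t_f]$ with a common $\omega_0$, forces $x(t)\in\mathcal{U}^\mathcal{S}_\epsilon$ for $t\ge t_0+Nt_f=:t_0+t_f'$, uniformly in $t_0$ and in $\omega>\omega_0$, while the stability part of the previous paragraph keeps the trajectory in $\mathcal{U}^\mathcal{S}_\epsilon$ thereafter. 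Together these give local practical uniform asymptotic stability of $\mathcal{S}$ for \eqref{eq:input_affine_system}.

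I expect the main obstacle to be precisely this concatenation: one must arrange the nested sublevel sets and the tolerance $D$ so that a \emph{single} $\omega_0$ serves for all the successive finite-time applications of Theorem~\ref{thm:traj_approx} and the approximation errors do not accumulate over the infinitely many steps. The reason it goes through is that \eqref{eq:liebracket_system} is uniformly (in $t_0$) asymptotically stable, so both the per-step contraction of the sublevel sets and the step length $t_f$ are uniform in $t_0$, and the error committed on each interval is reset by this contraction rather than being integrated — the mechanism already used in \cite{871771} and in B.3 of \cite{Moreau:2003fk}, here carried over to the wider input class described by A\ref{ass:a3}--A\ref{ass:a4}.
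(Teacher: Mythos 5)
Your strategy is sound and rests on exactly the mechanism the paper uses — apply Theorem~\ref{thm:traj_approx} on consecutive windows of a fixed length $t_f$ with one common $\omega_0$, and let the strict contraction of \eqref{eq:liebracket_system} over each window absorb the approximation error so that nothing accumulates — but the route you take to organize the iteration is genuinely different. The paper never constructs a Lyapunov function: it works directly with the $\epsilon$--$\delta$ form of uniform stability and $\delta_1$-uniform attractivity of $\mathcal{S}$ for \eqref{eq:liebracket_system}, chooses $B_1<\epsilon$, $B_2<\delta$ and the single tolerance $D=\min\{\epsilon-B_1,\delta-B_2\}$ so that the endpoint $x(t_0+t_f)$ lands back in the \emph{same} metric neighborhood $\mathcal{U}^\mathcal{S}_\delta$ used as $\mathcal{K}$, and then restarts a fresh Lie bracket solution from that endpoint; for attractivity it needs only \emph{one} application of Theorem~\ref{thm:traj_approx} followed by an appeal to the already-proven practical uniform stability, rather than your chain of $N$ nested sublevel sets. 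Your converse-Lyapunov version buys a more quantitative picture of the decay, but it costs you an extra nontrivial ingredient (existence of a $C^1$ function $V$ with uniform-in-$t$ bounds for a compact attractor of a time-varying system, which the paper never needs) and it introduces a bookkeeping asymmetry that metric neighborhoods avoid: after one window you only know $V(t_0+t_f,x(t_0+t_f))\le c$, hence $|x(t_0+t_f)|_\mathcal{S}\le\alpha_1^{-1}(c)$, whereas your set $\mathcal{K}=\{x:\alpha_2(|x|_\mathcal{S})\le c\}$ only contains points with $|x|_\mathcal{S}\le\alpha_2^{-1}(c)\le\alpha_1^{-1}(c)$, so the endpoint need not lie in $\mathcal{K}$ and the induction as written does not close. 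This is repairable — phrase the induction hypothesis as $V(t_0+kt_f,\cdot)\le c$ throughout, take $\mathcal{K}\supseteq\bar{\mathcal{U}}^\mathcal{S}_{\alpha_1^{-1}(c)}$, and shrink $c$ so that Lie bracket solutions started anywhere in that larger set remain in the region where $V$ is defined and decreasing (so that $\mathcal{K}\subseteq\mathcal{B}$ still holds) — but it is precisely the kind of mismatch the paper's direct $\epsilon$--$\delta$ bookkeeping is designed to sidestep.
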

The proof can be found in Appendix \ref{pf:liesystem_loc}.

\begin{thm}
\label{thm:liesystem_glob}
Let Assumptions A\ref{ass:a1}--A\ref{ass:a4} be satisfied and suppose that a compact set $\mathcal{S}$ is globally uniformly asymptotically stable for \eqref{eq:liebracket_system}. Then $\mathcal{S}$ is semi-globally practically uniformly asymptotically stable for \eqref{eq:input_affine_system}. 
\end{thm}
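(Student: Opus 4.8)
\section*{Proof proposal for Theorem \ref{thm:liesystem_glob}}

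The plan is to deduce the three properties that constitute semi-global practical uniform asymptotic stability of $\mathcal{S}$ for \eqref{eq:input_affine_system} --- practical uniform stability, $\delta$-practical uniform attractivity for \emph{every} $\delta$, and practical uniform boundedness of the solutions --- from a single quantitative comparison between \eqref{eq:input_affine_system} and \eqref{eq:liebracket_system} obtained by iterating Theorem \ref{thm:traj_approx} over a sequence of equal-length time windows. The overall scheme is the ``averaging implies practical stability'' pattern, analogous to the global result in \cite{871771}. Write $|x|_{\mathcal{S}}:=\inf_{y\in\mathcal{S}}|x-y|$. Since $\mathcal{S}$ is compact and globally uniformly asymptotically stable for \eqref{eq:liebracket_system}, there is a class-$\mathcal{KL}$ function $\beta$ such that every solution $z$ of \eqref{eq:liebracket_system} satisfies $|z(t)|_{\mathcal{S}}\le\beta(|z(t_0)|_{\mathcal{S}},t-t_0)$ for all $t\ge t_0$ and all $t_0\in\mathbb{R}$ (see e.g. \cite{Khalil}); taking $t=t_0$ shows $\beta(r,0)\ge r$. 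Moreover, global uniform asymptotic stability forces every bounded set of initial conditions to lie in the set $\mathcal{B}$ of \eqref{eq:defB} (with $A$ governed by $\beta(\cdot,0)$ and the size of $\mathcal{S}$), so Theorem \ref{thm:traj_approx} is applicable on any prescribed bounded region.

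The core step is a one-window comparison. Fix a bounded set $\mathcal{K}\subseteq\mathcal{B}$, a window length $\tau\in(0,\infty)$ and a tolerance $D\in(0,\infty)$, and let $\omega_0$ be the constant furnished by Theorem \ref{thm:traj_approx} for this $\mathcal{K}$, this $D$, and $t_f=\tau$. Then for every $\omega>\omega_0$, every $s_0\in\mathbb{R}$ and every $\xi\in\mathcal{K}$, comparing the solution $x$ of \eqref{eq:input_affine_system} with $x(s_0)=\xi$ to the solution $z$ of \eqref{eq:liebracket_system} re-initialized by $z(s_0)=\xi$ gives $|x(t)|_{\mathcal{S}}\le|z(t)|_{\mathcal{S}}+|x(t)-z(t)|\le\beta(|\xi|_{\mathcal{S}},t-s_0)+D\le\beta(|\xi|_{\mathcal{S}},0)+D$ for $t\in[s_0,s_0+\tau]$, and in particular $|x(s_0+\tau)|_{\mathcal{S}}\le\beta(|\xi|_{\mathcal{S}},\tau)+D$. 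Applying this successively on the windows $[t_0+k\tau,t_0+(k+1)\tau]$, $k=0,1,2,\dots$, each time re-initializing $z$ at the current value of $x$, yields the scalar recursion $\rho_{k+1}\le\beta(\rho_k,\tau)+D$ with $\rho_k:=|x(t_0+k\tau)|_{\mathcal{S}}$, valid as long as all iterates $x(t_0+k\tau)$ remain in $\mathcal{K}$, together with the between-grid bound $|x(t)|_{\mathcal{S}}\le\beta(\rho_k,0)+D$ on $[t_0+k\tau,t_0+(k+1)\tau]$. (Uniqueness of $x$ under Assumptions A\ref{ass:a1} and A\ref{ass:a3} via the Carath\'eodory theory, plus the a priori bound just derived, also guarantees that $x$ has no finite escape time, hence exists on $[t_0,\infty)$.)

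It then remains to calibrate $\mathcal{K}$, $\tau$ and $D$ against $\beta$ for each of the three properties, choosing constants in the order $\mathcal{K}$ (to fix the region), then $\tau$ and $D$, then $\omega_0$. For practical uniform boundedness: given $\delta$, set $R:=\beta(\delta,0)$, $\mathcal{K}:=\bar{\mathcal{U}}^{\mathcal{S}}_{R+1}$, $D\le 1$, and $\tau$ large enough that $\beta(R+1,\tau)\le R$; induction on the recursion gives $\rho_k\le R+1$ for all $k$ (so the iterates never leave $\mathcal{K}$), whence $x(t)\in\mathcal{U}^{\mathcal{S}}_{\epsilon}$ with $\epsilon:=\beta(R+1,0)+1$ for all $t\ge t_0$. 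For $\delta$-practical uniform attractivity (any $\delta$, any $\epsilon$): invoke boundedness to confine $x$ to $\mathcal{K}:=\bar{\mathcal{U}}^{\mathcal{S}}_{B(\delta)}$, pick $\epsilon'\in(0,\epsilon)$ with $\beta(\epsilon',0)\le\epsilon/2$, then $D\le\epsilon'/2$, then $\tau$ large enough that $\beta(B(\delta),\tau)\le\epsilon'/2$ and $\beta(\epsilon',\tau)\le\epsilon'/2$; the recursion now gives $\rho_k\le\epsilon'$ for all $k\ge 1$, hence $|x(t)|_{\mathcal{S}}\le\beta(\epsilon',0)+D\le\epsilon$ for all $t\ge t_0+\tau$, so $t_f:=\tau$ works. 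Practical uniform stability is the same scheme with a different calibration: given $\epsilon$, choose $\delta$ with $\beta(\delta,0)\le\epsilon/2$, then $D\le\delta/2$, then $\tau$ with $\beta(\delta,\tau)\le\delta/2$, and $\mathcal{K}:=\bar{\mathcal{U}}^{\mathcal{S}}_{\epsilon}$; the recursion keeps $\rho_k\le\delta$ and $|x(t)|_{\mathcal{S}}\le\beta(\delta,0)+D\le\epsilon$ for all $t\ge t_0$. Where two of these estimates must hold simultaneously (attractivity uses boundedness), one simply takes the larger $\tau$, the smaller $D$, the larger $\mathcal{K}$, and the corresponding $\omega_0$.

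I expect the main obstacle to be precisely this patching: Theorem \ref{thm:traj_approx} only controls $|x-z|$ on a finite window, so the infinite-horizon conclusion has to be assembled from windows, and the additive error $D$ incurred on each window must be absorbed by the contraction of $\beta(\cdot,\tau)$ in its second argument --- which is possible only if $\tau$ is taken large relative to the region under consideration, and forces the constants to be chosen in the correct order. One must also verify that the re-initialized $z$-pieces and the sampled points $x(t_0+k\tau)$ never leave $\mathcal{K}$ (this is exactly what the inductive invariance built into the recursion supplies) and that everything is uniform in $t_0$, which is inherited from the uniformity in $t_0$ in Theorem \ref{thm:traj_approx} and in the $\mathcal{KL}$-estimate for the Lie bracket system.
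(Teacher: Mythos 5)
Your proof is correct and follows the same strategy the paper relies on: the paper itself omits this proof (deferring to \cite{871771}), and its Appendix~D proof of the local counterpart, Theorem~\ref{thm:liesystem_loc}, uses exactly your scheme of iterating Theorem~\ref{thm:traj_approx} over finite windows with the Lie bracket solution re-initialized at each sample point, the per-window error $D$ being absorbed by the decay of the Lie bracket dynamics, with uniformity in $t_0$ inherited from Theorem~\ref{thm:traj_approx}. Your only departure is presentational: you encode global uniform asymptotic stability of $\mathcal{S}$ as a class-$\mathcal{KL}$ estimate and run a scalar recursion $\rho_{k+1}\le\beta(\rho_k,\tau)+D$, whereas the paper and \cite{871771} chain the $\epsilon$--$\delta$ definitions of stability, attractivity and boundedness directly; your ordering of the choices of $\mathcal{K}$, $\tau$, $D$ and $\omega_0$ is the correct one and closes the argument.
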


We omit the proof of Theorem \ref{thm:liesystem_glob} since it is already covered in \cite{871771} for the case of $\mathcal{S}$ being the origin. The proof directly carries over to compact sets $\mathcal{S}$ by replacing the Euclidian norm with a distance function to the set $\mathcal{S}$.

\begin{remk}
The results above only capture stability and not performance and do not deliver a systematic way for choosing $\omega$. The notion of practical stability only requires the existence of $\omega_0$ without explicitly considering a specific value. As indicated by Theorem \ref{thm:traj_approx} the choice of $\omega$ depends on the set of initial conditions $\mathcal{K}$, the distance $D$ and the time $t_f$. 
\end{remk}
\section{Lie Bracket Approximation of Extremum Seeking Systems}
In this section, we show how the results from the previous section can be applied to multi-agent extremum seeking systems. As indicated in Section 2, the procedure consists of writing the extremum seeking system in the input-affine form, calculating the corresponding Lie bracket system and finally concluding the respective stability properties of the extremum seeking system from the stability properties of the Lie bracket system by using Theorems \ref{thm:liesystem_loc} and \ref{thm:liesystem_glob}. 

In the following, we define a suitable framework for multi-agent extremum seeking systems. Suppose a group of $N$ agents tries to achieve a common goal which is defined as an extremum of a map $F$.
Specifically, we enumerate the agents using the superscript $i$. The position of agent $i$ is denoted by $\bar{x}^i=[x^i_1,x^i_2]^\top\in\mathbb{R}^2$. We define furthermore $\bar{x}:=[x^1_1,x^1_2,\ldots, x^N_1,x^N_2]^{\top}$ as the position vector of the overall system. Every agent is equipped with a specific extremum seeking feedback, which is defined below. We do not assume that all agents are seeking the extremum of the same map, but rather that each agent is equipped with an individual map $f^i:\mathbb{R}^{2N}\to\mathbb{R}$, $i=1,\ldots,N$, which also depends on the states of the other agents and satisfies
\begin{enumerate}
\item[B1] $f^i\in C^2$, $i=1,\ldots,N$.
\end{enumerate}
Furthermore, the individual maps have to satisfy the following assumption
\begin{enumerate}
\item[B2] \label{ass:b1} There exists a function $F\in C^1: \mathbb{R}^{2N}\to\mathbb{R}$ such that  
$\nabla_{{\bar{x}}^i}f^i({\bar{x}}) = \nabla_{{\bar{x}}^i}F({\bar{x}}),  i=1,\ldots,N, \bar{x}\in\mathbb{R}^{2N}$.
\end{enumerate}
These conditions implies, that if every agent moves into the direction of the gradient of its individual map $f^i$ then it also moves in the direction of the gradient of $F$. We call this a potential function. The goal of the multi-agent system is to find the minimum (maximum) of the common map $F$ by only seeking the minimum (maximum) of the individual map $f^i$. 

The following assumptions guarantee the existence of local (global) maxima of the potential function
\begin{enumerate}
\item[B3] \label{ass:b2} There exists a nonempty and compact set $\mathcal{S}_{loc}\subseteq\mathbb{R}^{2N}$ of strict local maxima and a $\delta\in(0,\infty)$ such that $F(\bar{x}^*)>F(\bar{x})$ for all $\bar{x}^*\in\mathcal{S}_{loc}$ and all $\bar{x}\in\mathcal{U}^{\mathcal{S}_{loc}}_\delta\backslash\mathcal{S}_{loc}$. Furthermore, $\nabla_{{\bar{x}}}F({\bar{x}})=0$ implies ${\bar{x}}\in \mathcal{S}_{loc}$ for all ${\bar{x}}\in\mathcal{U}^{\mathcal{S}_{loc}}_\delta$.
\item[B4] \label{ass:b3} There exists a nonempty and compact set $\mathcal{S}_{glob}=\{\bar{x}\in\mathbb{R}^{2N}: \bar{x}=\arg\max_{x\in\mathbb{R}^{2N}}F(\bar{x})\}$ of global maxima. Furthermore, $F(\bar{x})\to -\infty$ for $|\bar{x}|\to \infty$ and $\nabla_{{\bar{x}}}F({\bar{x}})=0$ implies ${\bar{x}}\in \mathcal{S}_{glob}$ for all ${\bar{x}}\in\mathbb{R}^{2N}$.
\end{enumerate}
This framework originates from game theory, where Assumption B2 formally defines a potential game with potential function $F$. We refer to \cite{Monderer1996124} for more information on potential games.
\begin{remk}
Under the assumptions above, the common goal can be formalized as the minimization (maximization) of the potential function $F$. There exist powerful tools to construct meaningful individual maps for a given potential function (see e.g. the approach using the so-called \emph{Wonderful Life Utility} in \cite{WolpertWLU}). 
The design should be done such that an optimization of the individual maps leads to an optimization of $F$, see \cite{Monderer1996124}. For this case, even though the utility functions are designed, they usually depend on some parameters or functions (e.g. environmental conditions, individual agents' properties) which are unknown a priori. A typical example for this scenario is the coverage control problem formulated as a potential game in \cite{4814554} and  \cite{Durr:2011uq}.
These aspects justify the usage of extremum seeking in this setup. For a specific application of the extremum seeking in a potential game framework we refer to \cite{Durr:2011uq}.
\end{remk}


In the next subsection, we show how the above framework above can be combined with extremum seeking agents. We saw in Section 2 that the trajectories of the extremum seeking system can be approximated by the trajectories of its corresponding Lie bracket system, which moves into the gradient direction of its individual map. We generalize this to the multi-agent case. If each agent is equipped with an extremum seeking feedback which drives it into the gradient direction of its individual map $f^i$, we expect with Assumption B2 that the overall system practically converges to an extremum of $F$. This is shown in the next subsection.

\subsection{Multi-Agent Extremum Seeking}
We show how extremum seeking can be applied to the above framework  assuming single-integrator agent dynamics. 

Consider the system in Fig. \ref{fig:singleint} which is motivated by a similar extremum seeking feedback as in \cite{zsk}. Since the agents move in the plane, there are two extremum seeking loops, one for each dimension. The perturbations are chosen to be sinusoidal, whose frequencies are chosen for each agent individually, as specified below. The high-pass filters $G^i(s)=\frac{s}{s+h^i}$, $i=1,\ldots,N$ are introduced since they provide better transient behavior by removing possible constant offsets of the individual maps $f^i$, $i=1,\ldots,m$. They introduce an additionally degree of freedom, but do not influence the stability of the overall system, as it can be seen in the proofs of Theorems \ref{thm:mas_single_int_loc} and \ref{thm:mas_single_int_glob}. 

Define $\bar{x}_e:=[x^1_e,$ $\ldots,x^N_e]^{\top}$ and $x:=[\bar{x}^\top, \bar{x}_e^\top]^{\top}$ with $x^i_e$ denoting the state of the filter $G^i(s)=\frac{s}{s+h^i}$, i.e. in state space form we have $\dot{x}^i_e=-x^i_eh^i+u^i$ and $y^i=-x^i_eh^i+u^i$ with $u^i=f^i(\bar{x})$. 

\begin{figure}[t]
\centering
\includegraphics[width=180pt]{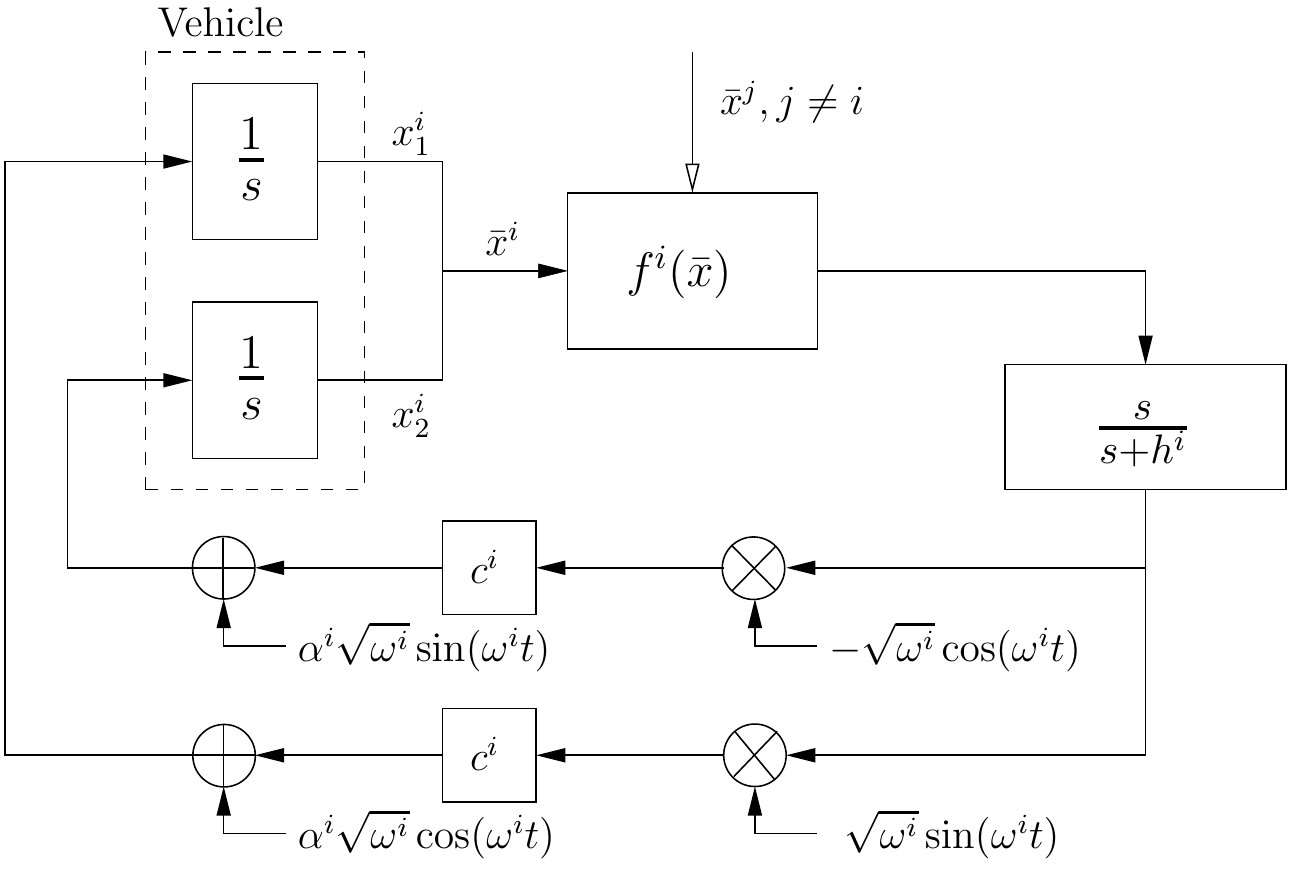}
\caption{Single-integrator dynamics}
\label{fig:singleint}
\end{figure}

The differential equations describing the dynamics of agent $i$ are given by
\begin{align}
\nonumber\dot{x}^i_{1} =& c^i\bigl(f^i(\bar{x})-x^i_eh^i\bigr)\sqrt{\omega^i}u^i_{1}(\omega^i t) + \alpha^i\sqrt{\omega^i}u^i_{2}(\omega^i t) \\
\nonumber\dot{x}^i_{2} =& -c^i\bigl(f^i(\bar{x})-x^i_eh^i\bigr)\sqrt{\omega^i}u^i_{2}(\omega^i t) + \alpha^i\sqrt{\omega^i}u^i_{1}(\omega^i t) \\
\label{eq:originalsystemsingleint}\dot{x}^i_e =& -x^i_eh^i + f^i(\bar{x})
\end{align}
with $u^i_{1}(\omega^i t)=\sin(\omega^i t)$, $u^i_{2}(\omega^i t)=\cos(\omega^i t)$. 

We need an additional assumption for the multi-agent case concerning the parameter $\omega$. We see in the proof of the next theorem that if the following assumption is satisfied, then some of the $\nu_{ji}$ in \eqref{eq:nu_ij} vanish in the corresponding Lie bracket system. This can be assured by assuming
\begin{enumerate}
\item[B5] $\omega^{i} = a^{i}\omega$ and $a^{i}\neq a^{j}, i\neq j$, $a^{i} \in \mathbb{Q}_{++}$, $\omega \in
(0,\infty)$, $h^{i}, \alpha^i, c^i \in (0,\infty)$, $i,j=1,\ldots,N$.
\end{enumerate}
Since the high-pass filter $\frac{s}{s+h^i}$ introduces an additional state $x^i_e$, which has also to be taken into account in the analysis, we denote by 
\begin{equation}
\begin{split}
\mathcal{E}^\mathcal{S}:=\{&\bar{x}_e\in\mathbb{R}^N: \\
 &\bar{x}_e=\biggl[\frac{f^1(\bar{x})}{h^1},\ldots,\frac{f^N(\bar{x})}{h^N}\biggr]^\top, \bar{x}\in\mathcal{S}\}
\end{split}
\end{equation} 
with $\mathcal{S}$ is either $\mathcal{S}_{loc}$ or $\mathcal{S}_{glob}$, the set which is shown to be attractive for the filter states $x^i_e$, $i=1,\ldots,N$.
\begin{thm}\label{thm:mas_single_int_loc}
Consider a multi-agent system with $N$ agents, each one having dynamics given by \eqref{eq:originalsystemsingleint}. Let Assumptions B1 to B3 and B5 be satisfied, then the set $\mathcal{S}_{loc}\times\mathcal{E}^{\mathcal{S}_{loc}}$ is locally practically uniformly asymptotically stable for the overall system with state $[\bar{x}^\top,\bar{x}_e^\top]^\top$. 
\end{thm}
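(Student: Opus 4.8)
The plan is to carry out the three-step programme of Section~3. First I would rewrite the $N$-agent system~\eqref{eq:originalsystemsingleint} in the input-affine form~\eqref{eq:input_affine_system}, stacking the state as $x:=[\bar{x}^\top,\bar{x}_e^\top]^\top\in\mathbb{R}^{3N}$. Using B5 I would factor $\sqrt{\omega^i}=\sqrt{a^i}\sqrt{\omega}$ and take $u^i_1(t,\omega t):=\sin(a^i\omega t)$, $u^i_2(t,\omega t):=\cos(a^i\omega t)$, absorbing $\sqrt{a^i}$ into the vector fields; the resulting $m=2N$ control vector fields $b^i_{\sin},b^i_{\cos}$ are supported on the $(x^i_1,x^i_2)$-plane of agent $i$ and are built from $\alpha^i$ and $c^i(f^i(\bar{x})-h^i x^i_e)$, while the drift $b_0$ is zero in the position components and equals the high-pass-filter dynamics $-h^i x^i_e+f^i(\bar{x})$ in the $x^i_e$-components. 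Assumptions A1--A4 are then immediate: A1 is B1; in A2 all $t$-derivatives vanish (the vector fields are time-invariant) and the remaining bounds are finite on compacta since $f^i\in C^2$; A3 holds because the inputs are time-invariant and bounded by $1$; and A4 holds because $\sin(a^i\cdot)$, $\cos(a^i\cdot)$ are periodic with a common period $T$ (which exists since every $a^i\in\mathbb{Q}_{++}$) and have zero mean over $[0,T]$.

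The second step is to compute the Lie bracket system~\eqref{eq:liebracket_system}. The crucial point is that B5 annihilates all cross-agent contributions: since $a^i\neq a^j$ for $i\neq j$, expanding the integrand of~\eqref{eq:nu_ij} by product-to-sum identities and integrating over the common period $T$ shows that $\nu\equiv 0$ for every pair of inputs belonging to different agents. Only the within-agent $(\sin,\cos)$ pair of each agent survives, with coefficient $\pm\tfrac{1}{2a^i}$. Evaluating $[b^i_{\sin},b^i_{\cos}]$ (a short computation; it lies in the $(x^i_1,x^i_2)$-plane and involves only $\partial f^i/\partial x^i_1$ and $\partial f^i/\partial x^i_2$) and multiplying by that coefficient yields the Lie bracket system
\begin{align*}
\dot{z}^i_1 &= \tfrac{c^i\alpha^i}{2}\,\tfrac{\partial f^i(\bar{z})}{\partial x^i_1}-\tfrac{(c^i)^2}{2}\bigl(f^i(\bar{z})-h^i z^i_e\bigr)\tfrac{\partial f^i(\bar{z})}{\partial x^i_2},\\
\dot{z}^i_2 &= \tfrac{c^i\alpha^i}{2}\,\tfrac{\partial f^i(\bar{z})}{\partial x^i_2}+\tfrac{(c^i)^2}{2}\bigl(f^i(\bar{z})-h^i z^i_e\bigr)\tfrac{\partial f^i(\bar{z})}{\partial x^i_1},\\
\dot{z}^i_e &= -h^i z^i_e+f^i(\bar{z}),\qquad i=1,\dots,N.
\end{align*}
In words, the $i$-th position block moves along $\tfrac{c^i\alpha^i}{2}\nabla_{\bar{x}^i}f^i(\bar{z})$ plus the filter output $y^i:=f^i(\bar{z})-h^i z^i_e$ times the $90^\circ$-rotation of $\nabla_{\bar{x}^i}f^i(\bar{z})$, and the filter states form a stable cascade driven by $f^i(\bar{z})$; note $y^i=-h^i(z^i_e-f^i(\bar{z})/h^i)$, so the rotational part vanishes on the target set.

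The third step is a Lyapunov analysis of this (autonomous) Lie bracket system near $\mathcal{S}_{loc}\times\mathcal{E}^{\mathcal{S}_{loc}}$. Writing $e^i:=z^i_e-f^i(\bar{z})/h^i$, I would use $V:=\bigl(F^*-F(\bar{z})\bigr)+\tfrac12\sum_{i=1}^N\kappa^i(e^i)^2$ on $\mathcal{U}^{\mathcal{S}_{loc}}_\delta$ intersected with a bounded $\bar{z}_e$-set, where $F^*$ is the value of $F$ on $\mathcal{S}_{loc}$ (which B3 forces to be constant) and the $\kappa^i>0$ are to be chosen small. Differentiating along~\eqref{eq:liebracket_system} and using B2 ($\nabla_{\bar{x}^i}f^i=\nabla_{\bar{x}^i}F$), the rotational terms cancel because $v^\top Rv=0$ for the skew matrix $R$, so $\tfrac{d}{dt}\bigl(F^*-F(\bar{z})\bigr)=-\sum_i\tfrac{c^i\alpha^i}{2}|\nabla_{\bar{x}^i}F(\bar{z})|^2\le 0$; the filter term contributes $-\sum_i\kappa^i h^i(e^i)^2$ plus a cross term which on the bounded region is $O\bigl(\sum_i|e^i|\,|\nabla F(\bar{z})|\bigr)$, since $|\dot{\bar{z}}|\le C|\nabla F(\bar{z})|$ there, and is therefore absorbed by the two negative-definite terms for $\kappa^i$ small (Young's inequality). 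Hence $\dot{V}\le -c_1|\nabla F(\bar{z})|^2-c_2\sum_i(e^i)^2$, which by B3 vanishes on $\mathcal{U}^{\mathcal{S}_{loc}}_\delta$ exactly on the compact invariant set $\{(\bar{z},[f^1(\bar{z})/h^1,\dots,f^N(\bar{z})/h^N]^\top):\bar{z}\in\mathcal{S}_{loc}\}\subseteq\mathcal{S}_{loc}\times\mathcal{E}^{\mathcal{S}_{loc}}$, on which $V\equiv0$. Standard Lyapunov/LaSalle arguments for compact invariant sets (using compactness of $\mathcal{S}_{loc}$ for the class-$\mathcal{K}$ bounds on $V$, and the fact that $F^*-F(\bar{z})$ is already nonincreasing along the position dynamics irrespective of the filter states) then give local uniform asymptotic stability of $\mathcal{S}_{loc}\times\mathcal{E}^{\mathcal{S}_{loc}}$ for~\eqref{eq:liebracket_system}. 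Finally, Theorem~\ref{thm:liesystem_loc}, applied to the input-affine form of~\eqref{eq:originalsystemsingleint}, converts this into local practical uniform asymptotic stability of $\mathcal{S}_{loc}\times\mathcal{E}^{\mathcal{S}_{loc}}$ for the extremum seeking system, which is the claim.

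The Lie bracket bookkeeping is routine. The delicate points, which I expect to be the main obstacles, are: obtaining a genuinely sign-definite $\dot{V}$ in a full neighbourhood of the set by correctly dominating the filter-to-position cross term and choosing the weights $\kappa^i$; and the set-stability technicalities, i.e.\ establishing the class-$\mathcal{K}$ estimates for $V$ relative to the compact target set and, when $\mathcal{S}_{loc}$ is not a single point, carefully tracking the filter states so that the target set is indeed $\mathcal{S}_{loc}\times\mathcal{E}^{\mathcal{S}_{loc}}$. The high-pass filters, although they enlarge the state, do not affect the stability conclusion: in the Lie bracket system they appear only as a stable cascade with a skew-symmetric feedback into the position dynamics, which is exactly the content of the remark preceding the theorem.
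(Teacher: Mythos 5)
Your proposal is correct and follows the paper's three-step programme almost verbatim in its first two steps: the input-affine rewriting with agent-supported vector fields, the use of B5 (via rationality of the $a^i$) to obtain a common period and to kill all cross-agent $\nu$'s, and the resulting per-agent Lie bracket system agree with \eqref{eq:liemultagentsingleint}--\eqref{eq:lie_sys_ma}, including the signs. The genuine difference is in step 3. The paper uses $V=-F$ alone and exploits exactly the observation you also make -- the rotational term $(f^i(\bar z)-h^iz^i_e)R\,\nabla_{\bar z^i}f^i$ is annihilated in $\dot V$ by skew-symmetry -- so that $\dot V$ in \eqref{eq:dotV_singleint_ma} is \emph{independent of the filter states}; the filters are then dispatched by a separate cascade argument ($\dot z^i_e=-h^iz^i_e+f^i(\bar z)$ is an exponentially stable linear system driven by a converging input). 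You instead build the composite function $V=(F^*-F)+\tfrac12\sum\kappa^i(e^i)^2$ and dominate the filter-to-position cross term by Young's inequality for small $\kappa^i$. Your route buys a strict (negative-definite in the full augmented state) Lyapunov function at the price of the cross-term bookkeeping; the paper's route avoids that bookkeeping entirely because the filter states simply do not appear in $\dot V$. Two small caveats on your version: the cross term arises from $\dot e^i=-h^ie^i-\tfrac{1}{h^i}\nabla f^i(\bar z)^\top\dot{\bar z}$ and your bound $|\dot{\bar z}|\le C|\nabla F(\bar z)|$ uses B2 together with boundedness of $e^i$ on the working region, so the absorption argument must be confined to a compact neighbourhood fixed in advance (which is fine for a local statement); and the constancy of $F$ on $\mathcal{S}_{loc}$, which your $F^*$ presupposes, does follow from B3 but only via the continuity argument that every point of $\mathcal{S}_{loc}$ is approached by points of $\mathcal{U}^{\mathcal{S}_{loc}}_\delta\setminus\mathcal{S}_{loc}$ -- worth stating, since the paper sidesteps it by never normalizing $V$. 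Both proofs share the same final appeal to Theorem \ref{thm:liesystem_loc}.
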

\begin{proof}
The proof can be split up into three steps. In the first step, we rewrite the system in the input-affine form. In the second step, we calculate the corresponding Lie bracket system and in the third step, we prove uniform asymptotic stability of the Lie bracket system. Theorem \ref{thm:liesystem_loc} then allows to conclude practical asymptotic stability for the original system.

In the first step, we rewrite the overall system with state $x=[\bar{x}^\top,\bar{x}_e^\top]^\top$, where each component is described by the differential equations given in \eqref{eq:originalsystemsingleint}, as input-affine system of the form
\begin{equation}
\label{eq:multi_agent_input_affine_old}
\begin{split}
\!\dot{{x}}
=&\sum_{i=1}^N
b^i_{0}({x})
+
b^i_{1}({x})\sqrt{\omega^i}\underbrace{\sin(\omega^i t)}_{=:u^i_{1}(\omega^i t)}
\\
&+
b^i_{2}({x})\sqrt{\omega^i}\underbrace{\cos(\omega^i t)}_{=:u^i_{2}(\omega^i t)}
\end{split}
\end{equation}
with $b^i_0, b^i_1, b^i_2$ having non-zero entries only at positions corresponding to agent $i$ and zeros elsewhere, i.e. $b^i_{0}(x)=[0,\ldots,0,0,0,-x^i_e+f^i(\bar{x}),0,\ldots,0]^\top$, $b^i_{1}(x)=[0,\ldots,0,$ $c^i(f^i(\bar{x})-x^i_eh^i),$  
$\alpha^i, 0, 0,\ldots,0 ]^\top$, $b^i_{2}(x)=[0,\ldots,0,
\alpha^i,$ $-c^i(f^i(\bar{x})-x^i_eh^i), 0, 0,\ldots,0 ]^\top$.
 
Note that due to Assumption B5 we have that $a_i$ can be written as $a_i=\frac{p_i}{q_i}$ with $p_i,q_i\in\mathbb{N}$ and define $q:=\prod_{i=1}^{N}q_{i}$ and $\tilde{\omega}=\frac{\omega}{q}$. Thus, $a_i\omega =\frac{p_i}{q_i}\omega=p_{i}\prod_{j\neq
i}q_{j}\tilde{\omega}=n^i\tilde{\omega}$, $i=1,\ldots, N$ and $j=1,2$ and for $n^i:=p_{i}\prod_{j\neq i}q_{j}\in\mathbb{N}$. 
We rewrite \eqref{eq:multi_agent_input_affine_old} as follows
\begin{equation}
\label{eq:multi_agent_input_affine_new}
\begin{split}
\dot{{x}}
=&\sum_{i=1}^N
b^i_{0}({x})
+
b^i_{1}({x})\sqrt{n^i}\sqrt{\tilde{\omega}}u^i_{1}(n^i\tilde{\omega} t)
\\
&+
b^i_{2}({x})\sqrt{n^i}\sqrt{\tilde{\omega}}u^i_{2}(n^i\tilde{\omega} t).
\end{split}
\end{equation}
It can directly be seen that $u^i_k(n^i{\theta})\in\{\sin(n^i{\theta}),\cos(n^i{\theta})\}$ are also $2\pi$-periodic in $n^i\tilde{\omega} t$ for $i=1,\ldots, N$ and $k=1,2$ and for $n^i\in\mathbb{N}$.

In the second step, we calculate the corresponding Lie bracket system as defined in \eqref{eq:liebracket_system}. Define $\bar{z}:=[z^1_{1}, z^1_{2},\ldots, z^N_{1}, z^N_{2}]^{\top}$, $\bar{z}_e:=[z^1_e,\ldots,z^N_e]^{\top}$ and
$z:=[\bar{z}^\top,\bar{z}_e^\top]^{\top}$ 
and $\nu^{i,j}_{k,l} = \frac{1}{2\pi}\int_0^{2\pi}u^i_k(n^i\tau)\int_0^\tau u^j_l(n^j\theta)d\theta d\tau$ which are constant for all $i,j=1,\ldots,N$ and $k,l=1,2$.

The crucial point now is that some Lie brackets in the differential equation of the overall system vanish due to the choice of different parameters $\omega^i$ for the agents. We obtain using  Lemma \ref{lem:mas_simplification} (see Appendix \ref{appendix:proofThm1}) that $\nu^{i,j}_{k,l}=-\frac{1}{2n^i}$ for all $n^i= n^j$ and $k=l$ and $\nu^{i,j}_{k,l}=0$ otherwise. Thus, the Lie bracket system simplifies to
\begin{align}
\begin{split}
\label{eq:liemultagentsingleint}\dot{{z}}=&\; \sum_{i=1}^N b^i_{0}({z}) -\frac{1}{2n^i}[\sqrt{n^i}{b}^i_{1},\sqrt{n^i}{b}^i_{2}]({z}) \\
=&\;\sum_{i=1}^N b^i_{0}({z}) -\frac{1}{2}[{b}^i_{1},{b}^i_{2}]({z}).
\end{split}
\end{align}
Explicitly, for the states of agent $i$ we obtain
\begin{align} 
\nonumber\dot{z}^i_{1} =& \frac{1}{2}\bigl(c^i\alpha^i\nabla_{z^i_1}f^i(\bar{z})- {c^i}^{2}\nabla_{{z}^i_2}f^i(\bar{z})\bigl(f^i(\bar{z})-z^i_e h)\bigr) \\
\nonumber\dot{z}^i_{2} =&  \frac{1}{2}\bigl(c^i\alpha^i\nabla_{z^i_2}f^i(\bar{z})+{c^i}^{2}\nabla_{{z}^i_1}f^i(\bar{z})\bigl(f^i(\bar{z})-z^i_e h)\bigr) \\
\label{eq:lie_sys_ma}\dot{z}^i_e =&-z^i_eh^i+f^i(\bar{z}).
\end{align}

In the third step, we prove uniform asymptotic stability of the set ${\mathcal{S}_{loc}}\times\mathcal{E}^{\mathcal{S}_{loc}}$ for \eqref{eq:liemultagentsingleint}. We first need to show existence of the solutions of \eqref{eq:liemultagentsingleint} on $[t_0,\infty)$ for all $t_0\in\mathbb{R}$.
Note that the vector field in \eqref{eq:liemultagentsingleint} is independent of $t$ and continuously differentiable in $z$. The existence and uniqueness theorem by Picard-Lindel\"of  (see \cite{Coddington:1955fk}) guarantees that there exist a time $t_f\in(0,\infty)$ and a solution of $z:\mathbb{R}\to\mathbb{R}^{3N}$ defined on $[t_0,t_0+t_f)$ for all $t_0\in\mathbb{R}$. Note furthermore, that with $h^i\in(0,\infty)$ in Assumption B5, the differential equation for $z^i_e$, i.e. $\dot{z}^i_e=-h^iz^i_e +u$ with $u=f^i(\bar{z})$ in \eqref{eq:lie_sys_ma} is linear and its origin is exponentially stable for $u=0$. 
Thus if $f^i(\bar{z}(t))$ is bounded then $z^i_e(t)$ exists and is bounded with gain $\frac{1}{h^{i}}$ for all $i=1,\ldots,N$, for all $t_0\in\mathbb{R}$ and for all $t\in[t_0,\infty)$. 
Suppose now that ${\mathcal{S}_{loc}}$ is uniformly asymptotically stable for $\bar{z}$, 
then it can be shown that the set $\mathcal{E}^{\mathcal{S}_{loc}}$ is uniformly asymptotically stable for $z^i_e$, $i=1,\ldots,N$. Therefore, the set ${\mathcal{S}_{loc}}\times\mathcal{E}^{\mathcal{S}_{loc}}$ is uniformly asymptotically stable for the overall system $[\bar{z}^\top,\bar{z}_e^\top]^\top$.

It is left to show that the set ${\mathcal{S}_{loc}}$ is uniformly asymptotically stable for $\bar{z}$.
Choose $V:=-F$ which is due to Assumption B3 a valid Lyapunov function in $\mathcal{U}^{\mathcal{S}_{loc}}_\delta$. Observe that due to Assumption B2 we have that $\nabla_{\bar{z}^i}f^i(\bar{z}) = \nabla_{\bar{z}^i}F(\bar{z}), i=1,\ldots,N$ and thus
\begin{equation}\label{eq:dotV_singleint_ma}
\begin{split}
\dot{V} = -\sum_{i=1}^N\frac{c^i\alpha^i}{2}\biggl(&\nabla_{{z}^i_1}F(\bar{z})^\top \nabla_{{z}^i_1}F(\bar{z})\\
&+\nabla_{{z}^i_2}F(\bar{z})^\top \nabla_{{z}^i_2}F(\bar{z})\biggr).
\end{split}
\end{equation}
Due to $c^i,\alpha^i\in(0,\infty)$, $i=1,\ldots,N$ in Assumption B5, we know that $V(\bar{z}(t))$ is decreasing along the trajectories of $\bar{z}(t)$ for all $\bar{z}(t_0)\in\mathcal{U}^{\mathcal{S}_{loc}}_\delta$, all $t_0\in\mathbb{R}$ and all $t\in[t_0,t_0+t_f)$. We conclude that $|\bar{z}(t)|$ is bounded and therefore all $f^i(\bar{z}(t))$, $i=1,\ldots,N$, are bounded for all $\bar{z}(t_0)\in\mathcal{U}^{\mathcal{S}_{loc}}_\delta$, all $t_0\in\mathbb{R}$ and all $t\in[t_0,\infty)$. 
Thus, $z(t)=[\bar{z}(t)^\top, \bar{z}_e(t)^\top]^\top$ exists for all $t_0\in\mathbb{R}$, for all $z(t_0)\in\mathcal{U}^{\mathcal{S}_{loc}}_\delta$ and for all $t\in[t_0,\infty)$.
Furthermore, we conclude with \eqref{eq:dotV_singleint_ma} and Assumption B3 that the set $\mathcal{S}_{loc}$ is locally uniformly asymptotically stable for the subsystem $\bar{z}=[z^1_{1}, z^1_{2},\ldots, z^N_{1}, z^N_{2}]^{\top}$ in \eqref{eq:liemultagentsingleint}. 

Note that due to Assumption B1 and the fact that $u^i_k(n^i\tilde{\theta})\in\{\sin(n^i\tilde{\theta}),\cos(n^i\tilde{\theta})\}$ for $i=1,\ldots, N$ and $k=1,2$ we conclude that Assumptions A\ref{ass:a1} to A\ref{ass:a4} are satisfied. Thus, with Theorem \ref{thm:liesystem_loc} the set $\mathcal{S}_{loc}\times\mathcal{E}^{\mathcal{S}_{loc}}$ is locally practically uniformly asymptotically stable for the overall system with state $[\bar{x}^\top,\bar{x}_e^\top]^\top$.
\end{proof}
\begin{thm}\label{thm:mas_single_int_glob}
Consider a multi-agent system with $N$ agents, each one having dynamics given by \eqref{eq:originalsystemsingleint}. Let Assumptions B1, B2, B4 and B5 be satisfied, then the set $\mathcal{S}_{loc}\times\mathcal{E}^{\mathcal{S}_{glob}}$ is semi-globally practically uniformly asymptotically stable for the overall system with state $[\bar{x}^\top,\bar{x}_e^\top]^\top$. 
\end{thm}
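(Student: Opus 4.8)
The plan is to repeat the three-step scheme of the proof of Theorem~\ref{thm:mas_single_int_loc} essentially verbatim, the only substantive change being that in the third step I would establish \emph{global} uniform asymptotic stability of the Lie bracket system and then invoke Theorem~\ref{thm:liesystem_glob} in place of Theorem~\ref{thm:liesystem_loc}. Steps one and two are identical to the local case and require no new ideas: Assumption~B5 lets me rewrite the overall system in the input-affine form \eqref{eq:multi_agent_input_affine_new}, and since $u^i_k(n^i\theta)\in\{\sin(n^i\theta),\cos(n^i\theta)\}$, Lemma~\ref{lem:mas_simplification} annihilates every coefficient $\nu^{i,j}_{k,l}$ with $n^i\neq n^j$ or $k\neq l$, so the corresponding Lie bracket system is again \eqref{eq:liemultagentsingleint}, i.e.\ componentwise \eqref{eq:lie_sys_ma}. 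Because B5 still supplies $h^i,\alpha^i,c^i\in(0,\infty)$ and B1, B2 are unchanged, the Lyapunov computation \eqref{eq:dotV_singleint_ma} with $V:=F(\bar x^*)-F$ for some $\bar x^*\in\mathcal{S}_{glob}$ carries over unchanged, giving $\dot V=-\sum_{i=1}^N\tfrac{c^i\alpha^i}{2}|\nabla_{\bar z^i}F(\bar z)|^2\le0$ along trajectories of \eqref{eq:liemultagentsingleint}.

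The third step is where Assumption~B4 is used. By B4, $V$ is continuous, vanishes exactly on $\mathcal{S}_{glob}$, is strictly positive off $\mathcal{S}_{glob}$ (since $\mathcal{S}_{glob}$ is the set of global maximizers), and is radially unbounded in $\bar z$ because $F(\bar x)\to-\infty$ as $|\bar x|\to\infty$; hence $V$ is a global Lyapunov function with respect to $\mathcal{S}_{glob}$ for the $\bar z$-coordinates. From $\dot V\le0$ I get boundedness of $\bar z(\cdot)$, hence boundedness of each $f^i(\bar z(\cdot))$, hence --- each filter equation $\dot z^i_e=-h^iz^i_e+f^i(\bar z)$ being a stable linear system with finite gain $1/h^i$ --- boundedness of $\bar z_e(\cdot)$ and forward completeness of \eqref{eq:liemultagentsingleint} for all $t_0\in\mathbb{R}$. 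Since the Lie bracket system is time-invariant, I would then apply the LaSalle (Barbashin--Krasovskii) invariance principle: trajectories approach the largest invariant subset of $\{z:\dot V=0\}=\{z:\nabla_{\bar z}F(\bar z)=0\}$, which by B4 equals $\{z:\bar z\in\mathcal{S}_{glob}\}$; on that set $\dot{\bar z}=0$ (because $\nabla_{\bar z^i}f^i=\nabla_{\bar z^i}F=0$ there), so $\bar z$ is frozen at some $\bar z^*\in\mathcal{S}_{glob}$ and the filter coordinates are forced to $z^i_e=f^i(\bar z^*)/h^i$, i.e.\ $\bar z_e\in\mathcal{E}^{\mathcal{S}_{glob}}$, a compact set by continuity of the $f^i$ and compactness of $\mathcal{S}_{glob}$. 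Together with stability (from $V$ for $\bar z$ and the exponentially stable filter cascade for $\bar z_e$) this gives that $\mathcal{S}_{glob}\times\mathcal{E}^{\mathcal{S}_{glob}}$ is globally uniformly asymptotically stable for \eqref{eq:liemultagentsingleint}.

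To conclude, I would verify --- exactly as in the proof of Theorem~\ref{thm:mas_single_int_loc} --- that Assumptions~A\ref{ass:a1}--A\ref{ass:a4} hold for \eqref{eq:multi_agent_input_affine_new}: B1 yields $b^i_k\in C^2$ with the required uniform-in-$t$ bounds on compact sets (A1, A2), while the $u^i_k\in\{\sin,\cos\}$ are measurable, trivially Lipschitz in the slow time, uniformly bounded, $2\pi$-periodic and of zero average (A3, A4). Theorem~\ref{thm:liesystem_glob}, applied with the distance to the compact set $\mathcal{S}_{glob}\times\mathcal{E}^{\mathcal{S}_{glob}}$ replacing the Euclidean norm, then delivers semi-global practical uniform asymptotic stability of $\mathcal{S}_{glob}\times\mathcal{E}^{\mathcal{S}_{glob}}$ for the overall extremum seeking system with state $[\bar x^\top,\bar x_e^\top]^\top$. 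I expect the main obstacle to be making the third step fully rigorous for a \emph{set} rather than a single equilibrium: establishing global uniform attractivity and practical uniform boundedness of the product set that also contains the filter coordinates, ensuring the filter cascade introduces no peaking or finite-escape behavior, and checking that the set-valued extension of Theorem~\ref{thm:liesystem_glob} indeed applies. The input-affine reformulation and the Lie bracket computation, by contrast, are mechanical repetitions of the local case.
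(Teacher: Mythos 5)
Your proposal is correct and follows essentially the same route as the paper: reuse steps one and two of the local proof verbatim, use Assumption B4 to make $V=-F$ (up to a constant) a radially unbounded Lyapunov function whose derivative \eqref{eq:dotV_singleint_ma} vanishes only for $\bar z\in\mathcal{S}_{glob}$, conclude global uniform asymptotic stability of the Lie bracket system (handling the filter states via the exponentially stable cascade), and invoke Theorem~\ref{thm:liesystem_glob}. The paper's own proof is just a terser version of this; your added detail on LaSalle and the filter coordinates fills in what the paper leaves implicit.
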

\begin{proof}
If Assumption B4 is satisfied then $\mathcal{S}_{glob}$ is a connected set containing the global maximum of $F$. Furthermore, $F$ is radially unbounded and with \eqref{eq:dotV_singleint_ma} we see that if $\dot{V}(\bar{z})=0$ implies $\bar{z}(t)\in\mathcal{S}_{global}$. Thus, we conclude that $\mathcal{S}_{loc}\times\mathcal{E}^{\mathcal{S}_{glob}}$ is globally uniformly asymptotically stable for \eqref{eq:liemultagentsingleint} and thus with Theorem \ref{thm:liesystem_glob}, it is semi-globally practically uniformly asymptotically stable for the overall system with state $[\bar{x}^\top,\bar{x}_e^\top]^\top$.
\end{proof}

In the following, we analyze the same setup as before but replace the single-integrator dynamics with unicycle dynamics as shown in Fig.~\ref{fig:unicycle}. The setup is motivated by \cite{zagsk}.

\begin{figure}[htpb]
\centering
\includegraphics[width=180pt]{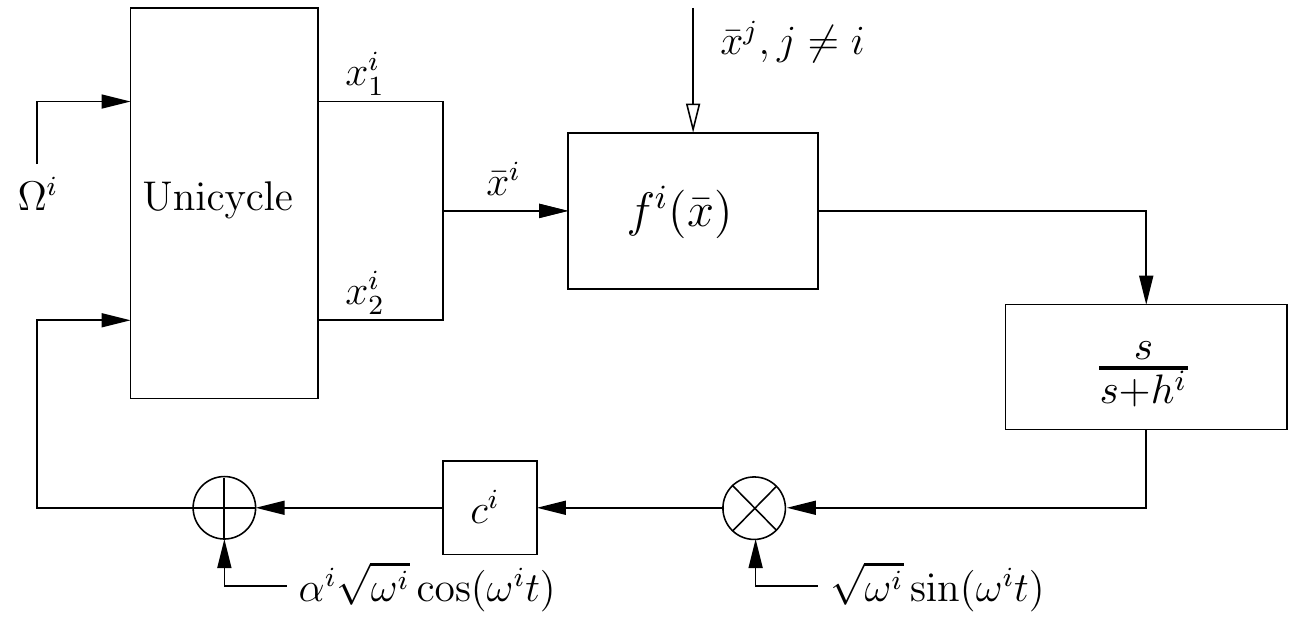}
\caption{Unicycle dynamics}
\label{fig:unicycle}
\end{figure}

Let us consider the unicycle model for each agent given by the
equations
\begin{equation} \label{eq:uni}
\begin{split}
\dot{x}^i_{1} &= u^i\cos(x_\theta^i), \;\; \dot{x}^i_{2} = u^i\sin(x_\theta^i), \;\; \dot{x}^{i}_{\theta} = v^i.
\end{split}
\end{equation}
The extremum seeking feedback controls only the forward velocity
of the vehicle, whereas the angular velocity is constant, so that
the inputs to each vehicle are
$u^i(t, x)=(c^{i}(f^i(\bar{x})-x^{i}_eh^{i})\sqrt{\omega^i}\sin(\omega^{i}
t)+\alpha^{i}\sqrt{\omega^i}\cos(\omega^{i} t))$ and
$v^i=\Omega^{i}$. We assume that $x^{i}_{\theta}(t_0)=0$ and for all $i=1,\ldots,N$ and
\begin{enumerate}
\item[B6] $\Omega^{i} = d^{i}\Omega$ with $d^{i} \in \mathbb{Q}_{++}$, $\Omega \in
\mathbb{R}\backslash \{0\}$.
\end{enumerate}
\begin{remk}
It becomes clear in the proof that the corresponding vector field of the Lie bracket system is time-varying and vanishes at discrete points in time. Assumption B6 assures that the vector field is periodic, so that a LaSalle-like argument can be used in order to prove uniform asymptotic stability. Note that the $\Omega^i$'s can be equal, whereas the $\omega^i$'s must be different for all agents.
\end{remk}
By substituting the expressions for the inputs into \eqref{eq:uni} and replacing $x^{i}_{\theta}(t)=\Omega^i t$ we obtain
\begin{equation}\label{eq:unicycle_1}
\begin{split}
\dot{x}^i_{1} = \biggl(c^{i}(f^i(\bar{x})-&x^i_{e}h^{i})\sqrt{\omega^{i}}u^i_{1}(\omega^i t)\\
&+\alpha^{i}\sqrt{\omega^{i}}u^i_{2}(\omega^i t)\biggr)\cos(\Omega^i t) \\
\dot{x}^i_{2} = \biggl(c^{i}(f^i(\bar{x})-&x^i_e h^i)\sqrt{\omega^{i}}u^i_{1}(\omega^i t)\\
&+\alpha^{i}\sqrt{\omega^{i}}u^i_{2}(\omega^i t)\biggr)\sin(\Omega^i t) \\
\dot{x}^i_e = -x^i_eh_{i}+f^i&(\bar{x})
\end{split}
\end{equation}
with $u^i_{1}(\omega^i t)=\sin(\omega^i t)$, $u^i_{2}(\omega^i t)=\cos(\omega^i t)$.

\begin{thm}\label{thm:mas_unicycle_loc}
Consider a multi-agent system with $N$ agents, each one having dynamics given by \eqref{eq:unicycle_1}. Let Assumptions B1 to B3, B5 and B6 be satisfied, then the set $\mathcal{S}_{loc}\times\mathcal{E}^{\mathcal{S}_{loc}}$ is locally practically uniformly asymptotically stable for the overall system with state $[\bar{x}^\top,\bar{x}_e^\top]^\top$. 
\end{thm}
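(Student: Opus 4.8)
The plan is to follow the three-step scheme of the proof of Theorem~\ref{thm:mas_single_int_loc}: put \eqref{eq:unicycle_1} into the input-affine form \eqref{eq:input_affine_system}, compute the associated Lie bracket system \eqref{eq:liebracket_system}, and prove uniform asymptotic stability of $\mathcal{S}_{loc}\times\mathcal{E}^{\mathcal{S}_{loc}}$ for the latter; Theorem~\ref{thm:liesystem_loc} then yields the assertion. The only genuine difference with the single-integrator case is that the factors $\cos(\Omega^i t)$ and $\sin(\Omega^i t)$ in \eqref{eq:unicycle_1} make the vector fields explicitly time-dependent, so the Lie bracket system will be time-periodic (by Assumption~B6) and the stability analysis of step three will need a LaSalle-type argument rather than a strict Lyapunov decrease.

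Reading $u^i_1(\omega^i t)=\sin(\omega^i t)$ and $u^i_2(\omega^i t)=\cos(\omega^i t)$ as artificial inputs, \eqref{eq:unicycle_1} becomes $\dot{x}=\sum_{i=1}^N b^i_0(x)+b^i_1(t,x)\sqrt{\omega^i}u^i_1(\omega^i t)+b^i_2(t,x)\sqrt{\omega^i}u^i_2(\omega^i t)$, where $b^i_0$ has the single nonzero entry $f^i(\bar{x})-x^i_e h^i$ in the $x^i_e$ coordinate, $b^i_1(t,x)$ has entries $c^i(f^i(\bar{x})-x^i_e h^i)\cos(\Omega^i t)$ and $c^i(f^i(\bar{x})-x^i_e h^i)\sin(\Omega^i t)$ in the $x^i_1,x^i_2$ coordinates, $b^i_2(t,x)$ has entries $\alpha^i\cos(\Omega^i t)$ and $\alpha^i\sin(\Omega^i t)$ there, and all remaining entries vanish. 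By Assumption~B1 these vector fields are $C^2$, and since each $\Omega^i$ is a fixed constant they and the derivatives appearing in A2 — including those of the brackets $[b^i_j,b^i_k]$ — are bounded uniformly in $t$ on every compact set, so A1 and A2 hold. Rescaling with Assumption~B5 exactly as in the proof of Theorem~\ref{thm:mas_single_int_loc} (write $\omega^i=n^i\tilde{\omega}$ with $n^i\in\mathbb{N}$) makes the inputs equal to $u^i_k(n^i\tilde{\omega}t)\in\{\sin(n^i\tilde{\omega}t),\cos(n^i\tilde{\omega}t)\}$, which are measurable, bounded, Lipschitz in the first argument, $2\pi$-periodic and of zero average; hence A3 and A4 hold as well.

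For the Lie bracket system \eqref{eq:liebracket_system}, the constants $\nu_{ji}$ in \eqref{eq:nu_ij} are obtained exactly as in Lemma~\ref{lem:mas_simplification}: since the $\omega^i$ are pairwise distinct (B5), every cross-agent bracket carries coefficient zero, and for each agent only $[b^i_1,b^i_2]$ remains, with coefficient $-\frac{1}{2}$ once the $\sqrt{n^i}$ factors are absorbed. Because $b^i_2$ is independent of $x$ one has $[b^i_1,b^i_2](t,z)=-\frac{\partial b^i_1}{\partial z}(t,z)\,b^i_2(t,z)$, and evaluating this (with $z=[\bar{z}^\top,\bar{z}_e^\top]^\top$) yields, for each agent $i$,
\begin{align*}
\dot{z}^i_1 &= \frac{c^i\alpha^i}{2}\cos(\Omega^i t)\,g^i(t,\bar{z}),\\
\dot{z}^i_2 &= \frac{c^i\alpha^i}{2}\sin(\Omega^i t)\,g^i(t,\bar{z}),\\
\dot{z}^i_e &= -z^i_e h^i+f^i(\bar{z}),
\end{align*}
with $g^i(t,\bar{z}):=\cos(\Omega^i t)\nabla_{z^i_1}f^i(\bar{z})+\sin(\Omega^i t)\nabla_{z^i_2}f^i(\bar{z})$. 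By Assumption~B6 the $\Omega^i$ are nonzero rational multiples of a common $\Omega$, so this vector field is periodic in $t$ with a common period $T'$. Note that the $\bar{z}$-subsystem does not depend on $\bar{z}_e$, so — as in the single-integrator proof — it suffices to show that $\mathcal{S}_{loc}$ is uniformly asymptotically stable for $\bar{z}$ and then conclude uniform asymptotic stability of $\mathcal{E}^{\mathcal{S}_{loc}}$ for the exponentially stable filter cascade, hence of $\mathcal{S}_{loc}\times\mathcal{E}^{\mathcal{S}_{loc}}$ for $z$.

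For the $\bar{z}$-subsystem take $V:=-F$, which is a Lyapunov function on $\mathcal{U}^{\mathcal{S}_{loc}}_\delta$ by Assumption~B3. Using Assumption~B2, $\nabla_{\bar{z}^i}f^i(\bar{z})=\nabla_{\bar{z}^i}F(\bar{z})$, so that $g^i(t,\bar{z})=\cos(\Omega^i t)\nabla_{z^i_1}F(\bar{z})+\sin(\Omega^i t)\nabla_{z^i_2}F(\bar{z})$ and a short computation gives $\dot{V}=-\sum_{i=1}^N\frac{c^i\alpha^i}{2}\bigl(g^i(t,\bar{z})\bigr)^2\le 0$; consequently a small enough compact sublevel set of $V$ inside $\mathcal{U}^{\mathcal{S}_{loc}}_\delta$ is forward invariant and trajectories starting in it are bounded. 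Since the vector field is $T'$-periodic in $t$, the system is equivalent to an autonomous one on $\mathbb{S}^1\times\mathbb{R}^{2N}$, and LaSalle's invariance principle applies: every such trajectory converges to the largest invariant set contained in $\{\dot{V}=0\}$. On that set $g^i(t,\bar{z})=0$ for all $i$ and all $t$, hence $\dot{\bar{z}}\equiv 0$ and $\bar{z}$ is frozen; the linear independence of $\cos(\Omega^i\cdot)$ and $\sin(\Omega^i\cdot)$ then forces $\nabla_{\bar{z}^i}f^i(\bar{z})=0$ for every $i$, i.e. $\nabla_{\bar{z}}F(\bar{z})=0$, so $\bar{z}\in\mathcal{S}_{loc}$ by Assumption~B3, and the filter equation pins $\bar{z}_e$ to $\mathcal{E}^{\mathcal{S}_{loc}}$; the periodicity also makes the convergence uniform. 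Thus $\mathcal{S}_{loc}\times\mathcal{E}^{\mathcal{S}_{loc}}$ is locally uniformly asymptotically stable for the Lie bracket system, and since A1--A4 hold, Theorem~\ref{thm:liesystem_loc} gives its local practical uniform asymptotic stability for the original unicycle system. The main obstacle is exactly this last step: the Lie bracket vector field degenerates periodically in time — it vanishes whenever the rotating direction $(\cos\Omega^i t,\sin\Omega^i t)$ is orthogonal to $\nabla_{\bar{z}^i}f^i(\bar{z})$ — so no strict Lyapunov decrease is available and one must combine the invariance principle for periodic systems with the linear-independence argument to rule out spurious invariant motions, while also being careful to exhibit a genuinely compact forward-invariant neighborhood for the local statement.
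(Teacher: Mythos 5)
Your proposal is correct and follows essentially the same route as the paper's proof: the same input-affine decomposition, the same Lie bracket system (your $g^i$ form is just a factored version of \eqref{eq:lie_sys_unicycle}), the Lyapunov function $V=-F$ with $\dot V=-\sum_i\frac{c^i\alpha^i}{2}(g^i)^2$, LaSalle's invariance principle for the $T$-periodic vector field guaranteed by Assumption B6, and the linear-independence argument $a\cos(\Omega^i t)+b\sin(\Omega^i t)\equiv 0\Rightarrow a=b=0$ to exclude spurious invariant sets, followed by the cascade argument for the filter states and Theorem \ref{thm:liesystem_loc}. Your observation that the $\bar z$-subsystem decouples from $\bar z_e$ (since $b^i_2$ is state-independent here) is a correct and slightly cleaner remark than the paper makes explicit.
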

\begin{proof}
The proof goes along the same lines as the proof of Theorem \ref{thm:mas_single_int_loc}. 
In the first step, we rewrite the overall system as input-affine system
\begin{equation}
\label{eq:multi_agent_input_affine_unicycle}
\begin{split}
\dot{{x}}
=\sum_{i=1}^N&
b^i_{0}({x})
+
b^i_{1}(t,{x})\sqrt{\omega^i}u^i_{1}(\omega^i t)
\\
&+
b^i_{2}(t,{x})\sqrt{\omega^i}u^i_{2}(\omega^i t),
\end{split}
\end{equation}
where $b^i_0, b^i_1, b^i_2$ have non-zero entries only at the positions corresponding to agent $i$ and zeros elsewhere, i.e. 
$b^i_0({x})=[0,\ldots, 0,0,0,-x^i_eh_{i}+f^i(x),0,\ldots,0]^\top$, $b^i_1(t,{x})=[0,\ldots,$ $(c^{i}(f^i(\bar{x})-x^i_{e}h^{i}))\cos(\Omega^i t), (c^{i}(f^i(\bar{x})-x^i_{e}h^{i}))$ $\sin(\Omega^i t),$ $ 0, 0, \ldots, 0]^\top$ and $b^i_2(t,{x})=[0,\ldots, \alpha^i\cos(\Omega^i t),$ $\alpha^i\sin(\Omega^i t),$ $ 0, 0, \ldots,0]^\top$. 

Note that due to Assumption B5 $a_i$ can be written as $a_i=\frac{p_i}{q_i}$ with $p_i,q_i\in\mathbb{N}$ and define $q:=\prod_{i=1}^{N}q_{i}$ and $\tilde{\omega}=\frac{\omega}{q}$. Thus, $a_i\omega =\frac{p_i}{q_i}\omega=p_{i}\prod_{j\neq
i}q_{j}\tilde{\omega}=n^i\tilde{\omega}$, $i=1,\ldots, N$ and $j=1,2$ and for $n^i:=p_{i}\prod_{j\neq i}q_{j}\in\mathbb{N}$. We rewrite \eqref{eq:multi_agent_input_affine_unicycle} as follows
\begin{equation}
\label{eq:multi_agent_input_affine}
\begin{split}
\dot{\bar{x}}
=\sum_{i=1}^N&
b^i_{0}({x})
+
{b}^i_{1}(t,{x})\sqrt{n^i}\sqrt{\tilde{\omega}}u^i_{1}(n^i\tilde{\omega} t)
\\
&+
{b}^i_{2}(t,{x})\sqrt{n^i}\sqrt{\tilde{\omega}}u^i_{2}(n^i\tilde{\omega} t).
\end{split}
\end{equation}
In the second step, we calculate the corresponding Lie bracket system as it was defined in \eqref{eq:liebracket_system}, 
\begin{equation}
\label{eq:liemultagentunicycle}\dot{{z}}=\sum_{i=1}^N b^i_{0}({z}) -\frac{1}{2}[{b}^i_{1},{b}^i_{2}](t,{z}).
\end{equation}
By the same reasoning as in the proof of Theorem \ref{thm:mas_single_int_loc}, this yields for the state of agent $i$
\begin{equation}
\begin{split}
\dot{z}^i_{1} =& \frac{1}{2}(c^{i}\alpha^{i}\nabla_{z^i_{1}}f^i(\bar{z})\cos^{2}(\Omega^{i}t)\\
&+c^i\alpha^{i}\nabla_{{z}^i_{2}}f^i(\bar{z})\cos(\Omega^{i}t)\sin(\Omega^{i}t))) \\
\dot{z}^i_{2} =& \frac{1}{2}(c^{i}\alpha^{i}\nabla_{{z}^i_{2}}f^i(\bar{z})\sin^{2}(\Omega_{i}t)\\
&+c^i\alpha^{i}\nabla_{{z}^i_{1}}f^i(\bar{z})\cos(\Omega^{i}t)\sin(\Omega^{i}t))) \\
\dot{z}^{i}_e =&-z^i_eh^i+f^i(\bar{z}).
\label{eq:lie_sys_unicycle}
\end{split}
\end{equation}

In the third step, we prove uniform asymptotic stability of the set $\mathcal{S}_{loc}\times\mathcal{E}^{\mathcal{S}_{loc}}$ for the Lie bracket system of \eqref{eq:multi_agent_input_affine}.
Due to Assumption B3 we exploit the function $V:=-F$ as a Lyapunov function candidate which is valid in $\mathcal{U}^{\mathcal{S}_{loc}}_\delta$. Observe that due to Assumption B2 we have that $\nabla_{\bar{z}^i}f^i(\bar{z}) = \nabla_{\bar{z}^i}F(\bar{z}), i=1,\ldots,N$ and thus
\begin{align} 
\nonumber \dot{V} =& -\sum_{i=1}^N\frac{c^i\alpha^i}{2} (\nabla_{{z}^i_1}F(\bar{z})\cos(\Omega^i t) + \nabla_{{z}^i_2}F(\bar{z})\sin(\Omega^i t))^{\top}\\
\label{eq:lyap_uni}&\cdot (\nabla_{{z}^i_1}F(\bar{z})\cos(\Omega^i t) + \nabla_{{z}^i_2}F(\bar{z})\sin(\Omega^i t)).
\end{align}
We have that $c^i, \alpha^i\in(0,\infty)$, $i=1,\ldots,N$ from Assumption B5, and thus $\dot{V}$ is negative semi-definite. Observe that the vector field in \eqref{eq:lie_sys_unicycle}  is
time-varying and there are time-instances where $\dot{\bar{z}}(t)=0$, but which are not steady-states for the system. Next, we make use of Assumption B6, which assures the existence of $k^i, l^i \in \mathbb{N}$, $i=1,\ldots,N$ such that $d^i = \frac{k^i}{l^i}$. One can verify that the vector field of the overall system \eqref{eq:liemultagentunicycle} consisting of $N$ agents with system equations as in \eqref{eq:lie_sys_unicycle}, is $T$-periodic with $T=\frac{2\pi}{\Omega}\prod_{i=1}^N l^i$. We can now use Theorem 4 in \cite{LaSalle:1962kl} which is LaSalle's Invariance Principle for periodic vector fields and conclude uniform asymptotic stability. It is left to show that no trajectory of \eqref{eq:liemultagentunicycle} can stay identically in the set where $\dot{V}(\bar{z})=0$ except for $\bar{z}\in \mathcal{S}_{loc}$. To see this, observe that the summands of $\dot{V}$ can only be equal to zero if
$\nabla_{{z}^i_{1}}F(\bar{z}(t))\cos(\Omega^{i}t)+
\nabla_{{z}^i_2}F(\bar{z}(t))\sin(\Omega^{i}t)=0$, $i=1,\ldots,N$. On the set $\dot{V}(\bar{z})=0$ the differential equation yields $\dot{z}^i_{1}=\dot{z}^i_{2}=0$ and therefore
${z}^i_{1}(t)=\text{const.}$ and ${z}^i_{2}(t)=\text{const.}$. Thus
$\nabla_{{z}^i_{1}}F(\bar{z}(t))=\text{const.}$ and
$\nabla_{{z}^i_{2}}F(\bar{z}(t)))=\text{const.}$. But there are
no constants $a,b\in\mathbb{R}$ such that
$a\cos(\Omega^{i} t) + b\sin(\Omega^{i} t) = 0$ for all $t\in[t_0,\infty)$ except $a=b=0$ and therefore $\nabla_{{z}^i_{1}}F(\bar{z}(t))
=\nabla_{{z}^i_{2}}F(\bar{z}(t))= 0$. We conclude that the set $\mathcal{S}_{loc}$ is locally uniformly asymptotically stable for the subsystem $\bar{z}$ in \eqref{eq:lie_sys_unicycle}. Observe furthermore, that due to $h^i\in(0,\infty)$ in Assumption B5, the differential equation for $z^i_e$, i.e. $\dot{z}^i_e=-h^iz^i_e +u$ with $u=f^i(\bar{z})$ in \eqref{eq:lie_sys_ma} is linear and its origin is exponentially stable for $u=0$. Thus if $f^i(\bar{z}(t))$ is bounded then $z^i_e(t)$ exists and is bounded with gain $\frac{1}{h^{i}}$ for all $i=1,\ldots,N$, for all $t_0\in\mathbb{R}$ and for all $t\in[t_0,\infty)$. Therefore, the set ${\mathcal{S}_{loc}}\times\mathcal{E}^{\mathcal{S}_{loc}}$ is uniformly asymptotically stable for the overall system $[\bar{z}^\top,\bar{z}_e^\top]^\top$.

Note that due to Assumption B1 and the fact that $u^i_k(n^i\tilde{\theta})\in\{\sin(n^i\tilde{\theta}),\cos(n^i\tilde{\theta})\}$ for $i=1,\ldots, N$ and $k=1,2$ we conclude that Assumptions A\ref{ass:a1} to A\ref{ass:a4} are satisfied. Thus, with Theorem \ref{thm:liesystem_loc} the set  ${\mathcal{S}_{loc}}\times\mathcal{E}^{\mathcal{S}_{loc}}$  is locally practically uniformly asymptotically stable for the overall system with state $[\bar{x}^\top,\bar{x}_e^\top]^\top$.
\end{proof}
\begin{thm}\label{thm:mas_unicycle_glob}
Consider a multi-agent system with $N$ agents, each one having dynamics given by \eqref{eq:unicycle_1}. Let Assumptions B1, B2 and B4 to B6 be satisfied, then the set  ${\mathcal{S}_{loc}}\times\mathcal{E}^{\mathcal{S}_{glob}}$  is semi-gobally practically uniformly asymptotically stable for the overall system with state $[\bar{x}^\top,\bar{x}_e^\top]^\top$. 
\end{thm}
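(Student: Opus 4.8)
The plan is to follow the three-step scheme used for Theorem \ref{thm:mas_unicycle_loc}, keeping steps one and two unchanged and replacing the local stability analysis of step three by a global one, so that Theorem \ref{thm:liesystem_glob} can be invoked in place of Theorem \ref{thm:liesystem_loc}. In the first step the overall system is rewritten in the input-affine form \eqref{eq:multi_agent_input_affine}, with the vector fields $b^i_0$, $b^i_1(t,\cdot)$, $b^i_2(t,\cdot)$ exactly as in the proof of Theorem \ref{thm:mas_unicycle_loc}, and Assumption B5 is used to express each perturbation frequency as an integer multiple $n^i\tilde\omega$ of a common base frequency $\tilde\omega$. In the second step the corresponding Lie bracket system is computed and equals \eqref{eq:liemultagentunicycle}, componentwise \eqref{eq:lie_sys_unicycle}. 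Assumptions A\ref{ass:a1}--A\ref{ass:a4} are verified as before, using B1 and the fact that $u^i_k(n^i\tilde\theta)\in\{\sin(n^i\tilde\theta),\cos(n^i\tilde\theta)\}$.

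The substantive part is step three: proving that $\mathcal{S}_{glob}\times\mathcal{E}^{\mathcal{S}_{glob}}$ is \emph{globally} uniformly asymptotically stable for \eqref{eq:liemultagentunicycle}. I would take $V:=-F$ as a Lyapunov function. By Assumption B4 the map $F$ is radially unbounded, so $V$ has compact, forward-invariant sublevel sets; this yields forward completeness and uniform boundedness of the $\bar z$-trajectories from arbitrary initial conditions, and along solutions $\dot V$ is still given by \eqref{eq:lyap_uni} and hence negative semi-definite. Writing $d^i=\frac{k^i}{l^i}$ with $k^i,l^i\in\mathbb{N}$ (Assumption B6), the vector field of \eqref{eq:liemultagentunicycle} is $T$-periodic with $T=\frac{2\pi}{\Omega}\prod_{i=1}^N l^i$, so Theorem 4 in \cite{LaSalle:1962kl} applies. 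By the same computation as in the proof of Theorem \ref{thm:mas_unicycle_loc}, a trajectory remaining in $\{\dot V=0\}$ must have $z^i_1,z^i_2$ constant, which forces $\nabla_{z^i_1}F(\bar z)\cos(\Omega^i t)+\nabla_{z^i_2}F(\bar z)\sin(\Omega^i t)\equiv 0$ and therefore $\nabla_{\bar z}F(\bar z)=0$; by Assumption B4 this implies $\bar z\in\mathcal{S}_{glob}$. Hence the largest invariant subset of $\{\dot V=0\}$ is $\mathcal{S}_{glob}$, and $\mathcal{S}_{glob}$ is globally uniformly asymptotically stable for the $\bar z$-subsystem in \eqref{eq:lie_sys_unicycle}.

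The filter states are handled exactly as in the proof of Theorem \ref{thm:mas_single_int_glob}: since $h^i\in(0,\infty)$, each equation $\dot z^i_e=-h^i z^i_e+f^i(\bar z)$ is linear and exponentially stable and is driven by the bounded, convergent input $f^i(\bar z)$, so $\bar z_e$ converges to $\mathcal{E}^{\mathcal{S}_{glob}}$; combined with the previous paragraph this gives global uniform asymptotic stability of the compact set $\mathcal{S}_{glob}\times\mathcal{E}^{\mathcal{S}_{glob}}$ for the full Lie bracket system $z=[\bar z^\top,\bar z_e^\top]^\top$. Applying Theorem \ref{thm:liesystem_glob} to the input-affine system \eqref{eq:multi_agent_input_affine} then yields that $\mathcal{S}_{glob}\times\mathcal{E}^{\mathcal{S}_{glob}}$ is semi-globally practically uniformly asymptotically stable for the overall system with state $[\bar x^\top,\bar x_e^\top]^\top$. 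The step I expect to be hardest is this global stability claim for the time-varying Lie bracket system: one must make the periodic LaSalle argument of \cite{LaSalle:1962kl} deliver a genuinely \emph{uniform and global} conclusion rather than mere attractivity for a fixed initial time, and check carefully that radial unboundedness of $V=-F$ together with $T$-periodicity of the vector field are precisely the hypotheses needed; once that is settled, the remaining arguments are routine transcriptions of the proofs already given.
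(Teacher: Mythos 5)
Your proposal is correct and takes essentially the same route as the paper, which simply states that the proof "uses the same argumentation as the proof of Theorem \ref{thm:mas_single_int_glob}": one replaces the local Lyapunov argument of Theorem \ref{thm:mas_unicycle_loc} by the global one enabled by radial unboundedness of $F$ under Assumption B4, retains the periodic LaSalle step, and invokes Theorem \ref{thm:liesystem_glob} instead of Theorem \ref{thm:liesystem_loc}. Note that the set in the theorem statement should be read as $\mathcal{S}_{glob}\times\mathcal{E}^{\mathcal{S}_{glob}}$ (the $\mathcal{S}_{loc}$ there appears to be a typographical slip carried over from Theorem \ref{thm:mas_single_int_glob}), which is exactly the set your argument establishes.
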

The proof uses the same argumentation as the proof of Theorem \ref{thm:mas_single_int_glob}.
\section{Discussion}

\subsection{Relationship to Averaging Methods}
There is a close relationship between the results herein and averaging theory. The Lie bracket system in \eqref{eq:liebracket_system} can be seen as the averaged system of \eqref{eq:input_affine_system}. In order to use averaging theory, the system must be in the following form (see Eq. (10.23) on p. 404 in \cite{Khalil})
\begin{equation}\label{eq:averaging}
\frac{dx}{d\tau}=\epsilon b(\tau,x,\epsilon)
\end{equation}
with $x(\tau)\in\mathbb{R}^n$, $\epsilon\in(0,\infty)$ and $b\in C^2:\mathbb{R}\times\mathbb{R}^n\times\mathbb{R}\to\mathbb{R}^n$ where $b(\cdot,x,\epsilon)$ is $T$-periodic with $T\in(0,\infty)$. The associate averaged system is given by
\begin{equation}\label{eq:averaging_averaged}
\frac{dz}{d\tau}=\epsilon b_T(z)
\end{equation}
with $b_T(z) = \frac{1}{T}\int_{0}^Tb(s,z,0)ds$.

Standard averaging can not be applied directly to \eqref{eq:input_affine_system}. We show this with a simple calculation. After rescaling time $\tau=\omega t$ and by setting $\epsilon=\frac{1}{\omega}$ we obtain
\begin{equation}\label{eq:input_affine_trsfd}
\begin{split}
\frac{dx}{d\tau}&=\epsilon \biggl(b_0(\epsilon \tau,x)+\frac{1}{\sqrt{\epsilon}}\sum_{i=1}^{m}b_{i}(\epsilon \tau,x)u_{i}(\epsilon \tau, \tau)\biggr). 
\end{split}
\end{equation}
Since $\frac{1}{\sqrt{\epsilon}}$ appears in the vector field of \eqref{eq:input_affine_trsfd}  the vector field is not twice continuously differentiable and $b(\tau,z,0)$ does not exist. Thus the integral $\frac{1}{T}\int_{0}^Tb(s,z,0)ds$ does not exist. However, following the same ideas as in the proof of Theorem \ref{thm:traj_approx} in the Appendix, we can establish a connection between averaging theory and the results in this paper. 
We illustrate this idea using the introductory example. 
Consider \eqref{eq:intro1} and suppose that $f$ is continuously differentiable. After integrating of the differential equation, we obtain
\begin{equation}
x(t) = x_0 + \sqrt{\omega}\int_{t_0}^t \alpha \cos(\omega s) + f(x(s))\sin(\omega s)ds
\end{equation}
and by integrating the first expression of the integral $\int_{t_0}^t \alpha \cos(\omega s) = \frac{\alpha}{\omega}(\sin(\omega t)-\sin(\omega t_0))$ and performing a partial integration for the second expression $\int_{t_0}^t f(x(s))\sin(\omega s)ds = - \frac{1}{\omega}(f(x(t))\cos(\omega t)-f(x(t_0))\cos(\omega t_0))+\frac{1}{\omega}\int_{t_0}^t\nabla_x f(x(s))\dot{x}\cos(\omega s)ds$ we obtain
\begin{align}
 x(t) =&~ x_0+  \frac{\sqrt{\omega}}{\omega}r(\omega t,x(t))
 + \int_{t_0}^tb(\omega s,x(s)) ds
\end{align}
with 
\begin{align}
\nonumber \!\!\!\! r(\omega t,x(t)) =~& - f(x(t))\cos(\omega t)+f(x(t_0))\cos(\omega t_0)\\
&+\alpha(\sin(\omega t)-\sin(\omega t_0)) \\
\nonumber \!\!\!\! b(\omega s, x(s)) =~& \alpha\nabla_x f(x(s))\cos^2(\omega s)\\
&+ \nabla_x f(x(s)) f(x(s))\sin(\omega s)\cos(\omega s).
\end{align}
We see that for bounded trajectories the expression $\frac{\sqrt{\omega}}{\omega}r(\omega t,x(t))$ 
tends to zero when $\omega$ tends to infinity. Thus, we have that $x(t)\approx x_0+\int_{t_0}^t b(\omega s,x(s))ds$ and therefore $\dot{x} \approx  b(\omega t,x)$.
By rescaling time with $\tau=\omega t$ where $\omega = \frac{1}{\epsilon}$ we obtain
\begin{equation}
\begin{split}
\frac{dx}{d\tau} \approx \frac{1}{\omega} b(\tau,x) = \epsilon  b(\tau,x) 
\end{split}
\end{equation}
which is now in the form \eqref{eq:averaging}. We can use standard averaging analysis and obtain the averaged system
\begin{equation}
\begin{split}
\frac{dz}{d\tau} = \frac{1}{\omega} \frac{\alpha}{2}\nabla_z f(z) = \epsilon \frac{\alpha}{2}\nabla_z f(z)
\end{split}
\end{equation}
which coincides with \eqref{eq:intro2}. Summarizing, we established a connection between \eqref{eq:intro1} and \eqref{eq:intro2} using average-like arguments.


Notice that the amplitudes and frequencies of the sinusoids of the extremum seeking feedbacks in Fig. \ref{fig:singleint} and Fig. \ref{fig:unicycle} are different, compared to the amplitudes in the corresponding schemes in the existing literature \cite{zagsk}, \cite{zsk} and \cite{nesic}. Specifically, in \cite{zagsk} and \cite{zsk} the amplitudes of the perturbations are chosen to be $\omega$ and one, respectively, whereas the frequencies are chosen to be $\omega$. The choice of $\sqrt{\omega}$ for the amplitudes in combination with $\omega$ for the frequency is crucial in order to obtain the Lie bracket system \eqref{eq:liebracket_system} as approximation of the input-affine system \eqref{eq:input_affine_system} since the procedure described above would lead to a different averaged system for a different choice of the amplitudes. A similar remark was also pointed out on p. 241 in \cite{Kurzweil:1987fk}. 
Therefore, even though the schemes differ only in the choice of the amplitudes, the observation above let us expect that the average systems of the corresponding extremum seeking systems in \cite{zagsk} and \cite{zsk} differ from the Lie bracket systems obtained in this paper. A similar reasoning applies to \cite{nesic} concerning the results on static maps, where the parameters do not influence the frequencies of the perturbations but only their amplitudes. 
\subsection{Single-Agent Case}
Theorem \ref{thm:mas_single_int_loc} and Theorem \ref{thm:mas_single_int_glob} state local and semi-global practical uniform asymptotic stability for a group of $N$ agents with single-integrator and unicycle dynamics. A special case is a single-agent extremum seeking system for which we have $N=1$ and $f^1 = F$. Furthermore, a similar analysis can be adopted in a straight forward fashion to the case of extremum seeking in one dimension by removing one feedback loop in Fig. \ref{fig:singleint}. 
\subsection{Non-Sinusoidal Perturbations}
In the presented schemes in Fig. \ref{fig:singleintsimp}, Fig. \ref{fig:singleint} and Fig. \ref{fig:unicycle} it is not essential that the perturbation signals are sinusoidal. Theorem \ref{thm:liesystem_loc} and Theorem \ref{thm:liesystem_glob} can be applied to analogous schemes where the sinusoidal perturbations are replaced with other appropriately defined periodic signals as long as they satisfy Assumptions A3 and A4. This also includes discontinuous and/or non-differentiable signals such as square, triangle or sawtooth waveforms (see also Remark 4 above).

\section{Examples}
In this section, we show numerical examples which illustrate the main results. First, we compare for different values of $\omega$ the trajectories of the single-integrator system of \eqref{eq:multi_agent_input_affine_old} with its corresponding Lie bracket system \eqref{eq:liemultagentsingleint}. Second, using the Lie bracket system, we are able to explain characteristic points which are visible in the trajectories of the extremum seeking with unicycle dynamics. 

We consider a system of $N=3$ agents and enumerate them with $a,b,c$. We assign each agent the following maps
\begin{align}
\begin{split}
f^a(\bar{x}) =& -\frac{1}{2}(x_1^a-1)^2-\frac{1}{2}(x_2^a-1)^2 +{x_1^b}^2\\
&+{x_2^b}^2+e^{(-{x_1^c}^2-{x_2^c}^2)}-10,
\end{split} \\
\begin{split}
f^b(\bar{x}) =& -\frac{1}{2}(x_1^b+1)^2-\frac{1}{2}(x_2^b+1)^2\\
 &+ \sin(x_1^a+x_2^a)-10,
\end{split}\\
f^c(\bar{x}) =& -\frac{1}{2}(x_1^c+1)^2-\frac{3}{2}(x_2^c-1)^2+10.
\end{align}
We choose the parameters $h=h^a=h^b=h^c=1$, $\alpha^a=\alpha^b=\alpha^c=1$, $c^a=c^b=c^c=0.3$ and the initial conditions $[\bar{x}^\top_0,\bar{x}_{0e}^\top]^\top=[2,-2,-2,2,-1,2.5,0,0,0]^\top$. Observe that each of the $f^i$'s, $i=a,b,c$ are functions of the states of the respective other agents. 

Furthermore, we consider the quadratic function
\begin{equation}
F(\bar{x}) = -\frac{1}{2}(\bar{x}-\bar{x}^*)^\top Q (\bar{x}-\bar{x}^*),
\end{equation}
where $\bar{x}^*=[1, 1,-1, -1, -1, 1]^\top$ and the diagonal matrix $Q=\text{diag}(1,1,1,1,1,3)$. We can verify that $\nabla_{\bar{x}^i} f^i(\bar{x}) = \nabla_{\bar{x}^i}F(\bar{x})$, $i=a,b,c$ and we see that $F$ is quadratic and attains its maximal value at $\bar{x}^*$. We expect from Theorems \ref{thm:mas_single_int_glob} and \ref{thm:mas_unicycle_glob} that $\bigl[(\bar{x}^*)^{\top},\frac{f^a(\bar{x}^*)}{h},\frac{f^b(\bar{x}^*)}{h},\frac{f^c(\bar{x}^*)}{h})\bigr]^\top$ is semi-globally practically uniformly asymptotically stable for the extremum seeking systems.

In Fig. \ref{fig:singleint_10} the trajectories of the original and the Lie bracket systems are depicted with $\omega=10$ and $\omega^a=\omega$, $\omega^b=2\omega$, $\omega^c=3\omega$. The trajectories of the Lie bracket system captures the qualitative evolution of the trajectories of the original system. In Fig. \ref{fig:singleint_100} we see a simulation with the same parameters but with $\omega=100$. 

These examples illustrate two properties. First, the trajectories of the original system approach those of the Lie bracket system for large values of $\omega$. This observation points up the result of Theorem \ref{thm:traj_approx}. 
\begin{figure}[tpb]
\centering
\includegraphics[width=180pt]{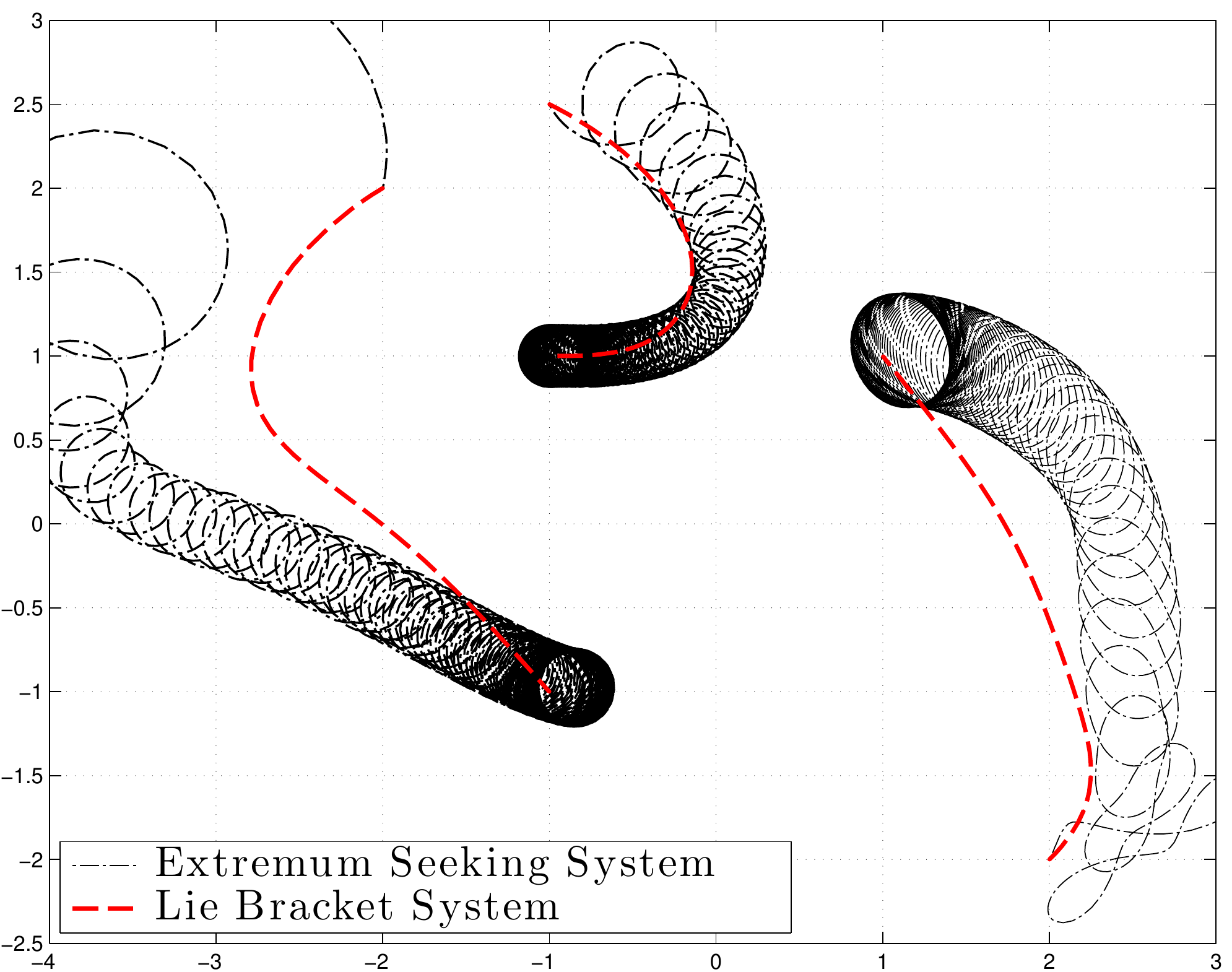}
\caption{Comparison of trajectories of a three-agent single-integrator system and its respective Lie bracket system, for $\omega = 10$}
\label{fig:singleint_10}
\end{figure}
Second, we deduce from Fig. \ref{fig:singleint_10} and Fig. \ref{fig:singleint_100} that even though each of the $f^i$'s, $i=a,b,c$ contains highly nonlinear terms depending on the states of the other agents, the overall system practically converges even for small values of $\omega$ to the expected extremum.
\begin{figure}[tpb]
\centering
\includegraphics[width=180pt]{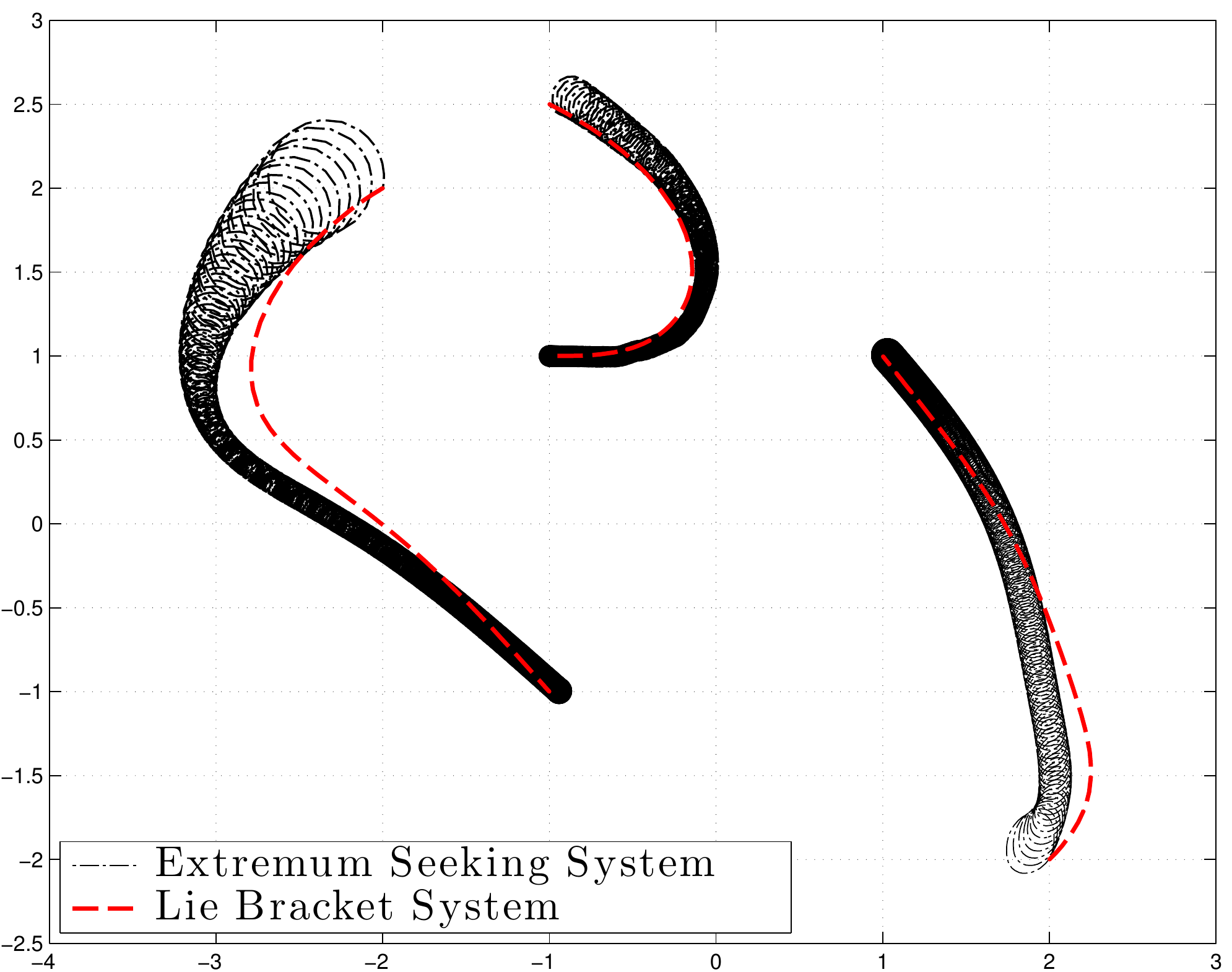}
\caption{Comparison of trajectories of a three-agent single-integrator system and its respective Lie bracket system, for $\omega=100$}
\label{fig:singleint_100}
\end{figure}

The same result can be observed in the case of unicycle dynamics and the same choice of parameters as above, with additionally $\Omega^a=1$, $\Omega^b = 2$, $\Omega^c=3$. In Fig. \ref{fig:unicycle100} the trajectories of the original and the Lie bracket systems are depicted for $\omega=80$. Observe that the overall system practically converges as expected to the extremum. The trajectory of the extremum seeking system contains characteristic points, which also appear in the trajectory of the Lie bracket system. Apparently the vector field changes its direction abruptly. This can be explained by regarding the differential equation of the Lie bracket system in \eqref{eq:lie_sys_unicycle}, which is time-varying and vanishes at the zero-crossing instances of the sinusoids. 
\begin{figure}[tpb]
\centering
\includegraphics[width=180pt]{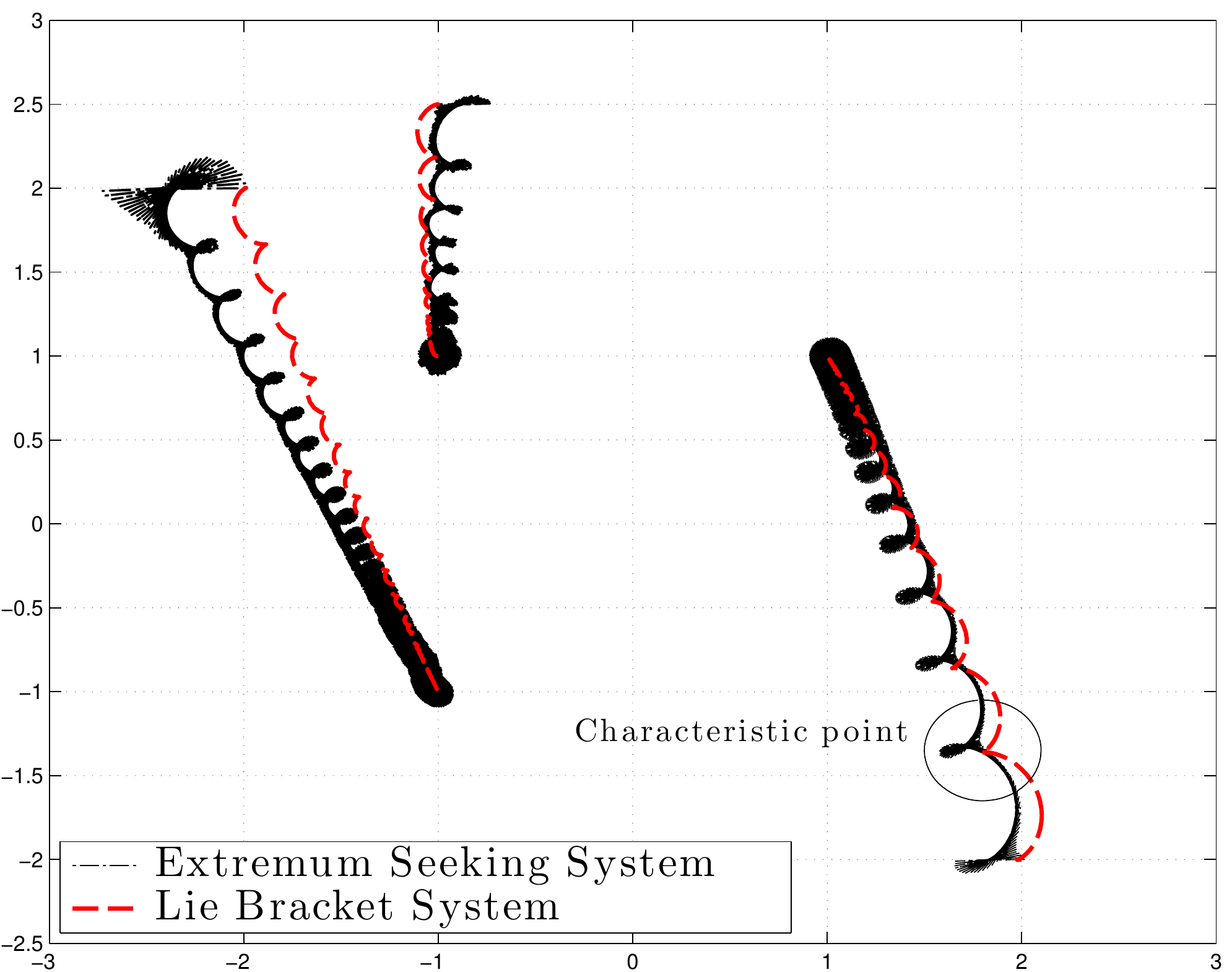}
\caption{Comparison of trajectories of a three-agent unicycle system and its respective Lie bracket system, for $\omega=80$}
\label{fig:unicycle100}
\end{figure}
\section{Conclusion}
In this work we developed a methodology, which led to a novel interpretation as well as to novel stability results for extremum seeking systems. By identifying the sinusoidal perturbations of the extremum seeking as artificial inputs, we were able to rewrite the system in a certain input-affine form and to relate this system to the so-called Lie bracket system, which nicely reveals the optimizing behavior of extremum seeking. The Lie bracket system viewpoint of extremum seeking allowed us to establish strong stability results for extremum seeking systems. We proved that the trajectories of systems belonging to a certain class of input-affine systems can be approximated by the trajectories of their corresponding Lie bracket system. Furthermore, we showed that global (local) uniform asymptotic stability of the Lie bracket system implies  semi-global (local) practical uniform asymptotic stability of the input-affine system. We applied these results to a multi-agent extremum seeking system consisting of agents with either single-integrator or unicycle dynamics. Finally, the results are illustrated using numerical examples.
\section{Acknowledgements}
We thank Shankar Sastry for the fruitful discussions and the anonymous referees for very helpful comments. This work was supported by the Deutsche Forschungsgemeinschaft (Emmy-Noether-Grant,  Novel Ways in Control and Computation, EB 425/2-1, and Cluster of Excellence in Simulation Technology, EXC 310/1), the Swedish Research Council and the Knut and Alice Wallenberg Foundation.
\begin{appendix}
\section{Existence and Uniqueness}
\label{app:exuni}
Consider the differential equation
\begin{equation}\label{eq:diffeq}
\dot{x}=f(t,x)
\end{equation}
with $f:\mathbb{R}\times\mathbb{R}^n\to\mathbb{R}^n$ and with initial condition $x(t_0)=x_0\in\mathbb{R}^n$. 
If there exist a $t_e\in(0,\infty)$ and an absolutely continuous function $x$ such that 
\begin{equation}
x(t) = x(t_0)+\int_{t_0}^{t} f(\tau,x(\tau))d\tau, ~~t\in[t_0,t_0+t_e)
\end{equation}
and $\dot{x}(t)=f(t,x(t))$ for $t\in[t_0,t_0+t_e)$ except on a set of measure zero, then $x(\cdot)=x(\cdot;t_0,x_0)$ is said to be a solution of \eqref{eq:diffeq} through $x(t_0) = x_0$ defined on $[t_0,t_0+t_e)$. 

\begin{thm}[see \cite{Bressan:2007fk,Hale:1969kx}]
\label{thm:exuni}
Consider \eqref{eq:diffeq} and suppose for each compact sets $\mathcal{T}\subseteq\mathbb{R}$ and $\mathcal{C}\subseteq\mathbb{R}^n$ there exist measurable functions $M,L:\mathcal{T}\to\mathbb{R}$ such that
\begin{equation}
\begin{split}
|f(t,x)|&\leq M(t), \\
|f(t,x_1)-f(t,x_2)|&\leq L(t)|x_1-x_2|, 
\end{split}
\end{equation}
$t\in \mathcal{T}, x,x_1,x_2\in \mathcal{C}$. 
Then for any $t_0\in \mathcal{T}$ and $x(t_0)\in \mathcal{C}$ there exist a $t_e\in(0,\infty)$ and a unique solution $x$ through $x(t_0)$, which is defined on $[t_0,t_0+t_e)$.
\end{thm}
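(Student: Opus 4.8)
The plan is to recast the initial value problem as a fixed-point equation for the integral operator and apply the Banach fixed-point theorem on a suitable ball of continuous functions, which is the classical Carath\'eodory existence argument. First I would apply the hypothesis with $\mathcal{C}$ replaced by the closed unit ball $\bar{\mathcal{U}}^{\{x_0\}}_1$ around $x_0$ (which is compact and has $x_0$ in its interior, so the original $\mathcal{C}$ plays no role in the local statement) and with $\mathcal{T}:=[t_0,t_0+1]$, obtaining the bounding and Lipschitz functions $M,L$, which I will assume to be locally integrable. This integrability is automatic in the paper's setting, where the relevant $M$ and $L$ are in fact essentially bounded because the $b_i$ are continuous and the $u_i$ are bounded by Assumption A3; in general it is the natural reading of the hypothesis and is what \cite{Bressan:2007fk} and \cite{Hale:1969kx} require.

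Next I would fix $r\in(0,1]$ and choose $t_e\in(0,\infty)$ small enough that $\int_{t_0}^{t_0+t_e}M(\tau)\,d\tau\le r$ and $\int_{t_0}^{t_0+t_e}L(\tau)\,d\tau<1$, both possible by absolute continuity of the Lebesgue integral. On the complete metric space $X:=\{x\in C([t_0,t_0+t_e];\mathbb{R}^n):|x(t)-x_0|\le r\ \text{for all}\ t\}$ with the supremum norm, define $(\mathcal{F}x)(t):=x_0+\int_{t_0}^t f(\tau,x(\tau))\,d\tau$. One checks that $\tau\mapsto f(\tau,x(\tau))$ is measurable for every $x\in X$ — the standard fact that a Carath\'eodory right-hand side (measurable in $t$, continuous in $x$, the latter a consequence of the Lipschitz bound) composed with a continuous curve is measurable — and that it is dominated by the integrable function $M$, so $\mathcal{F}x$ is well defined and absolutely continuous. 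The bound $\int M\le r$ gives $\mathcal{F}(X)\subseteq X$, and the Lipschitz estimate together with $\int L<1$ gives $|(\mathcal{F}x)(t)-(\mathcal{F}y)(t)|\le\bigl(\int_{t_0}^{t_0+t_e}L(\tau)\,d\tau\bigr)\,\|x-y\|_\infty$ for all $x,y\in X$, so $\mathcal{F}$ is a contraction on $X$.

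The Banach fixed-point theorem then yields a unique $x\in X$ with $\mathcal{F}x=x$; since $x$ is absolutely continuous and satisfies the integral equation, Lebesgue's differentiation theorem gives $\dot{x}(t)=f(t,x(t))$ for almost every $t\in[t_0,t_0+t_e)$, so $x$ is a solution in the sense defined immediately before the theorem. For uniqueness among \emph{all} solutions through $x(t_0)$ (not only those lying in $X$), I would note that any solution $y$ stays in $\bar{\mathcal{U}}^{\{x_0\}}_r$ on some subinterval by continuity, and there $|x(t)-y(t)|\le\int_{t_0}^{t}L(\tau)|x(\tau)-y(\tau)|\,d\tau$, so Gr\"onwall's inequality in integral form forces $x\equiv y$; a routine continuation argument propagates this equality over the whole common domain.

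The step I expect to require the most care to write out — though it is entirely standard — is verifying that $\tau\mapsto f(\tau,x(\tau))$ is measurable and integrable, i.e. checking that the hypotheses genuinely place $f$ in the Carath\'eodory class and that $M,L$ may be taken locally integrable. Once that is settled, the remainder is the textbook contraction argument and may simply be cited from \cite{Bressan:2007fk} or \cite{Hale:1969kx}.
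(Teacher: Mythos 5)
Your argument is correct and is exactly the standard Carath\'eodory/contraction-mapping proof that the cited references give; the paper itself offers no proof of this theorem, deferring entirely to \cite{Bressan:2007fk} and \cite{Hale:1969kx}, so there is nothing to diverge from. You are also right to flag that measurability of $f(\cdot,x)$ and local integrability of $M$ and $L$ must be read into the hypotheses for the integral formulation to make sense --- both hold in the paper's application (where $f_\omega(\cdot,x)$ is measurable and $M$, $L$ can be taken essentially bounded by Assumptions A1--A3) and are assumed in the cited sources, so your proof fills the gap faithfully.
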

\section{Preliminary Lemmas}
\label{appendix:proofThm1}
\begin{lemma}\label{lem:mas_simplification}
Let
\begin{equation}
\nu_{ij} =\frac{1}{2\pi} \int_{0}^{2\pi} u^i(n^i\tau)\int_{0}^{\tau}u^j(n^j \theta) d\theta d\tau
\end{equation}
with $n^i,n^j\in\mathbb{N}$, $u^i(n^it)\in \{\sin(n^it),\cos(n^it)\}$, then
\begin{equation}
\nu_{ij}=
\begin{cases}
\frac{1}{2 n^i} & 
\begin{split}
n^i=n^j, u^i(n^it) &= \sin(n^it),\\ u^j(n^jt) &= \cos(n^jt)
\end{split}\\
-\frac{1}{2 n^i} & 
\begin{split}
n^i=n^j, u^i(n^it) &= \cos(n^it), \\u^j(n^jt) &= \sin(n^jt)
\end{split}\\
0 & \text{ else }.
\end{cases}
\end{equation}
\end{lemma}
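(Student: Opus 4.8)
The plan is to compute the double integral $\nu_{ij}$ explicitly by treating the inner integral first, then evaluating the outer integral using elementary orthogonality relations for sines and cosines on $[0,2\pi]$. First I would compute the inner integral $\int_0^\tau u^j(n^j\theta)\,d\theta$ in closed form: if $u^j(n^j\theta)=\cos(n^j\theta)$ this equals $\frac{1}{n^j}\sin(n^j\tau)$, and if $u^j(n^j\theta)=\sin(n^j\theta)$ it equals $\frac{1}{n^j}\bigl(1-\cos(n^j\tau)\bigr)$. Substituting back, $\nu_{ij}$ becomes a sum of integrals over $[0,2\pi]$ of products of the form $u^i(n^i\tau)\cdot\{\sin(n^j\tau)\text{ or }\cos(n^j\tau)\}$, plus possibly a term $\frac{1}{2\pi n^j}\int_0^{2\pi}u^i(n^i\tau)\,d\tau$, which vanishes since $n^i\in\mathbb{N}$ and hence $u^i(n^i\cdot)$ has zero average over $[0,2\pi]$.

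Next I would invoke the standard orthogonality identities: for $p,q\in\mathbb{N}$, $\frac{1}{2\pi}\int_0^{2\pi}\sin(p\tau)\cos(q\tau)\,d\tau=0$ always, $\frac{1}{2\pi}\int_0^{2\pi}\sin(p\tau)\sin(q\tau)\,d\tau=\frac{1}{2}\delta_{pq}$, and $\frac{1}{2\pi}\int_0^{2\pi}\cos(p\tau)\cos(q\tau)\,d\tau=\frac{1}{2}\delta_{pq}$. Applying these to each of the four cases for the pair $(u^i,u^j)$: when $u^i=\sin$, $u^j=\cos$ the inner integral contributes $\frac{1}{n^j}\sin(n^j\tau)$, and $\frac{1}{2\pi}\int_0^{2\pi}\sin(n^i\tau)\cdot\frac{1}{n^j}\sin(n^j\tau)\,d\tau=\frac{1}{2n^j}\delta_{n^in^j}$, which equals $\frac{1}{2n^i}$ when $n^i=n^j$ and $0$ otherwise. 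When $u^i=\cos$, $u^j=\sin$, the inner integral is $\frac{1}{n^j}(1-\cos(n^j\tau))$; the constant part integrates against $\cos(n^i\tau)$ to zero, and $-\frac{1}{2\pi n^j}\int_0^{2\pi}\cos(n^i\tau)\cos(n^j\tau)\,d\tau=-\frac{1}{2n^j}\delta_{n^in^j}$, giving $-\frac{1}{2n^i}$ when $n^i=n^j$ and $0$ otherwise. In the remaining two cases ($u^i=u^j=\sin$ or $u^i=u^j=\cos$), the nonconstant part of the inner integral always pairs a sine against a cosine in the outer integrand, so the integral vanishes identically, and the constant part (present only when $u^j=\sin$) again integrates to zero against the zero-average function $u^i(n^i\cdot)$.

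The argument is essentially a bookkeeping exercise with no real obstacle; the only point requiring minor care is ensuring that the ``else'' branch genuinely covers every remaining combination, namely $n^i\neq n^j$ with any choice of $u^i,u^j$, as well as $n^i=n^j$ with $u^i=u^j$. Collecting the four cases exactly reproduces the claimed piecewise formula, which completes the proof.
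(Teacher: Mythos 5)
Your computation is correct and is exactly the ``direct calculation'' the paper invokes (the paper gives no further detail): evaluate the inner antiderivative, discard the zero-average constant term, and apply the standard orthogonality relations on $[0,2\pi]$, which reproduces all four cases of the claimed formula.
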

\begin{proof}
The result follows by a direct calculation.
\end{proof}
\begin{lemma}
\label{lem:average1}
Let $u:\mathbb{R}\times\mathbb{R}\to\mathbb{R}$ satisfy Assumption A\ref{ass:a3}. Furthermore, $u(t,\cdot)$ is $T$-periodic, i.e. $u(t,\theta+T)=u(t,\theta)$ for some $T\in(0,\infty)$ and all $t,\theta\in\mathbb{R}$. Then, there exist $k_1,k_2\in[0,\infty)$ such that the inequality
\begin{align}
\nonumber \biggl\lvert\int_{t_0}^{t}\biggl(u(\tau,\omega \tau) - \frac{1}{T}\int_{0}^{T}&u(\tau,\theta)d\theta\biggr) d\tau\biggr\lvert\\
\label{lemeq:1}
&\leq \frac{k_1(t-t_0)+k_2}{\omega}
\end{align}
is satisfied for all $t_0\in\mathbb{R}$ and all $t\in [t_0,\infty)$. Furthermore, $k_2=0$ if $\omega(t-t_0)$ is an integer multiple of $T$, i.e. there exists an $n\in\mathbb{N}_0$ such that $\omega(t-t_0)=Tn$.
\end{lemma}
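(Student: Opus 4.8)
The plan is to reduce \eqref{lemeq:1} to a sum of contributions over full ``fast periods'' plus a single leftover term, exploiting that $u$ depends Lipschitz-continuously on its first argument so that the slow time can be frozen on each fast period. Write $\bar{u}(\tau):=\frac{1}{T}\int_{0}^{T}u(\tau,\theta)\,d\theta$ for the average; from Assumption A\ref{ass:a3} one checks immediately that $\bar{u}$ inherits the bounds $|\bar{u}(t_1)-\bar{u}(t_2)|\le L|t_1-t_2|$ and $|\bar{u}(\tau)|\le M$.

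First I would partition $[t_0,t]$ into $N:=\lfloor\omega(t-t_0)/T\rfloor$ consecutive subintervals $[s_{k-1},s_k]$ of length $T/\omega$, with $s_k:=t_0+kT/\omega$, followed by a remainder interval $[s_N,t]$ of length $r\in[0,T/\omega)$. On each full subinterval I would compare $\int_{s_{k-1}}^{s_k}u(\tau,\omega\tau)\,d\tau$ with $\frac{T}{\omega}\bar{u}(s_{k-1})$: replacing $u(\tau,\omega\tau)$ by the frozen integrand $u(s_{k-1},\omega\tau)$ costs at most $LT^2/\omega^2$ (Lipschitz in the first slot times the interval length $T/\omega$), and then the substitution $\theta=\omega\tau$ together with the $T$-periodicity of $u(s_{k-1},\cdot)$ gives \emph{exactly} $\int_{s_{k-1}}^{s_k}u(s_{k-1},\omega\tau)\,d\tau=\frac{1}{\omega}\int_{\omega s_{k-1}}^{\omega s_{k-1}+T}u(s_{k-1},\theta)\,d\theta=\frac{T}{\omega}\bar{u}(s_{k-1})$. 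Comparing $\int_{s_{k-1}}^{s_k}\bar{u}(\tau)\,d\tau$ with $\frac{T}{\omega}\bar{u}(s_{k-1})$ in the same way costs a further $LT^2/\omega^2$, so each full subinterval contributes at most $2LT^2/\omega^2$ to the integral of $u(\tau,\omega\tau)-\bar{u}(\tau)$.

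Summing over the $N\le\omega(t-t_0)/T$ full subintervals yields a bound $2LT(t-t_0)/\omega$, while on the remainder interval $|u(\tau,\omega\tau)-\bar{u}(\tau)|\le 2M$ gives a contribution at most $2Mr\le 2MT/\omega$. Hence the left-hand side of \eqref{lemeq:1} is bounded by $\frac{2LT(t-t_0)+2MT}{\omega}$, so $k_1:=2LT$ and $k_2:=2MT$ do the job; and when $\omega(t-t_0)$ is an integer multiple of $T$ the remainder interval is empty ($r=0$), so the $k_2$-term drops out. Since $k_1,k_2$ depend only on $L$, $M$, $T$, the estimate is uniform in $t_0$ and $t$. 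I do not anticipate a real obstacle: the only points that require care are verifying that the frozen-coefficient integral over one fast period equals $\frac{T}{\omega}\bar{u}(s_{k-1})$ exactly (this is where periodicity enters), and bookkeeping the leftover period so that $k_2$ stays additive and independent of $t_0$; measurability of $u$ is needed only so that all the integrals are well defined.
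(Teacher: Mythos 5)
Your proposal is correct and is essentially the paper's own argument: both partition the integration range into $\lfloor\omega(t-t_0)/T\rfloor$ fast periods plus a leftover, freeze the slow argument of $u$ at the start of each period so that $T$-periodicity gives exact agreement with the average, charge $O(LT^2/\omega^2)$ per period for the freezing error via the Lipschitz bound of Assumption A3, and bound the leftover piece by $2M\delta/\omega$ (which vanishes exactly when $\omega(t-t_0)$ is a multiple of $T$). The paper merely performs the same bookkeeping after the change of variables $r=\omega\tau$ and an add-and-subtract inside a double integral, so the difference from your version is purely expository.
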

\begin{proof}
Using the fact that $u(\tau,\omega\tau)$ $=\frac{1}{T}\int_0^Tu(\tau,\omega\tau)d\theta$ and applying the change of variables $r=\omega\tau$, $dr=\omega d\tau$, the expression in the norm of left hand-side in \eqref{lemeq:1} yields
\begin{equation}
\label{lemeq:2}
\begin{split}
\frac{1}{T\omega}\int_{0}^{T}\int_{\omega t_0}^{\omega t} u(\frac{r}{\omega},r) - u(\frac{r}{\omega},\theta) drd\theta.
\end{split}
\end{equation}
Since $T\in(0,\infty)$ we can divide $[\omega t_0,\omega t]$ into $n\in\mathbb{N}_{0}$ pieces of length $T$ such that $\omega (t-t_0) =Tn+\delta$ with $0\leq \delta < T$ being the leftover piece. We obtain for \eqref{lemeq:2}
\begin{align}
\nonumber&\; \frac{1}{T\omega}\sum_{k=0}^{n-1}\int_{0}^{T}\!\!\int_{\omega t_0+T k}^{\omega t_0+T(k+1)} u(\frac{r}{\omega},r) - u(\frac{r}{\omega},\theta) dr d\theta\\
\label{lemeq:3} &+ R_1,
\end{align}
where we introduced the left-over piece
\begin{equation}
\label{eq:defR1}
R_1:=\frac{1}{T\omega}\!\int_{0}^{T}\!\!\int_{\omega  t_0+T n}^{ \omega t_0+T n+\delta} \!\!u(\frac{r}{\omega},r) - u(\frac{r}{\omega},\theta) drd\theta,
\end{equation}
which is considered later. 

The integration interval in \eqref{lemeq:3} is now shifted by introducing the change of variable $s=r-\omega t_0-Tk$, $ds=dr$
\begin{align}
\nonumber&\frac{1}{T\omega}\sum_{k=0}^{n-1}\int_{0}^{T}\int_{0}^{T} u(\frac{h_k(s)}{\omega},h_k(s))- u(\frac{h_k(s)}{\omega},\theta) dsd\theta\\
&+ R_1
\end{align} 
with $h_k(s):=s+\omega t_0+T k$.
Since $u(t,\cdot)$ is $T$-periodic, it follows that $u(\frac{h_k(s)}{\omega},h_k(s))=u(\frac{h_k(s)}{\omega},h_0(s))$. Thus, this simplifies to
\begin{align}
\nonumber&\frac{1}{T\omega}\sum_{k=0}^{n-1}\int_{0}^{T}\int_{0}^{T}u(\frac{h_k(s)}{\omega},h_0(s))- u(\frac{h_k(s)}{\omega},\theta)ds d\theta\\
&+ R_1.
\end{align}
Note, that since the integration with respect to $s$ and with respect to $\theta$ is performed from $0$ to $T$ and due to the periodicity of $u(t,\cdot)$, we can add $\int_{0}^Tu(\frac{h_k(0)}{\omega},\theta)d\theta$ and subtract $\int_{0}^Tu(\frac{h_k(0)}{\omega},h_0(s))ds$ which sums up to zero. We obtain
\begin{align}
\nonumber&\frac{1}{T\omega}\sum_{k=0}^{n-1}\int_{0}^{T}\int_{0}^{T}u(\frac{h_k(s)}{\omega},h_0(s))-u(\frac{h_k(0)}{\omega},h_0(s))\\
\label{lemeq:4}&+u(\frac{h_k(0)}{\omega},\theta)- u(\frac{h_k(s)}{\omega},\theta) dsd\theta+ R_1.
\end{align}
Assumption A\ref{ass:a3} yields the existence of $L\in(0,\infty)$ such that the above expression can be bounded from above as follows $|u(\frac{h_k(s)}{\omega},h_0(s))-u(\frac{h_k(0)}{\omega},h_0(s))|\leq \frac{L}{\omega}|s|$ and $|u(\frac{h_k(0)}{\omega},\theta)- u(\frac{h_k(s)}{\omega},\theta)|\leq \frac{L}{\omega}|s|$. Thus, \eqref{lemeq:4} can be upper bounded by
\begin{align}
\nonumber&\frac{1}{T\omega}\sum_{k=0}^{n-1}\int_{0}^{T}\int_{0}^{T}2\frac{L}{\omega}|s|  dsd\theta+ |R_1|\\
&= \frac{T^2 L}{\omega^2}n + |R_1|.
\end{align} 
We now consider the expression $R_1$ in \eqref{eq:defR1}. Assumption A\ref{ass:a3} yields the existence of $M\in(0,\infty)$ such that it can be upper bounded as follows
\begin{equation}
\begin{split}
|R_1| 
&\leq\frac{1}{T\omega}\int_{0}^{T}\int_{\omega t_0+ T n}^{\omega t_0+ T n+\delta} 2M  d\tau d\theta = \frac{2M\delta}{\omega}.
\end{split}
\end{equation}
Therefore, using the definition of $n=\frac{\omega (t-t_0)-\delta}{T}$ we obtain
\begin{equation}
\begin{split}
\frac{T^2 L}{\omega^2}n + \frac{2M\delta}{\omega} &=\frac{T^2 L}{\omega^2}\frac{\omega (t-t_0)-\delta}{T} +\frac{2M\delta}{\omega}\\
&\leq \frac{T L(t-t_0)+2M\delta}{\omega}.
\end{split}
\end{equation}
Choosing $k_1:=T L$ and $k_2 := 2M\delta$ proves the first claim. If $\omega(t-t_0)=Tn$ then $\delta=0$ and therefore, $k_2=0$ which proves the second claim.
\end{proof}

\begin{lemma}
\label{lem:average2}
Let $u_i,u_j:\mathbb{R}\times\mathbb{R}\to\mathbb{R}$ satisfy Assumptions A\ref{ass:a3} and A\ref{ass:a4}. Furthermore, let 
\begin{equation}
\tilde{u}_{ij}(t,\theta):=u_i(t,\theta)\int_0^\theta u_j(t,r)dr,
\label{eq:uijtilde}
\end{equation}
then there exist $M_{ij},L_{ij}\in(0,\infty)$ such that
\begin{enumerate}
\item $\tilde{u}_{ij}(t,\cdot)$ is $T$-periodic, i.e. $\tilde{u}_{ij}(t,\theta + T) = \tilde{u}_{ij}(t,\theta)$,
\item $\sup_{t,\theta\in\mathbb{R}}|\tilde{u}_{ij}(t,\theta)|\leq M_{ij}$,
\item $|\tilde{u}_{ij}(t_1,\theta)-\tilde{u}_{ij}(t_2,\theta)| \leq L_{ij}|t_1-t_2|$.
\end{enumerate}
\end{lemma}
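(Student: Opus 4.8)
The plan is to treat the three claims in sequence, using the periodicity and zero-average hypotheses of Assumption A\ref{ass:a4} to control the inner antiderivative $g_j(t,\theta):=\int_0^\theta u_j(t,r)\,dr$ and then the boundedness and Lipschitz hypotheses of Assumption A\ref{ass:a3} to extract the constants $M_{ij},L_{ij}$. Before anything else I would observe that $g_j(t,\cdot)$ is well defined and, in $\theta$, absolutely continuous because $u_j(t,\cdot)$ is measurable and bounded; hence $\tilde{u}_{ij}(t,\cdot)=u_i(t,\cdot)\,g_j(t,\cdot)$ is measurable and the statement is meaningful.

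First I would establish claim 1. Since $u_j(t,\cdot)$ is $T$-periodic with zero average, and the integral of a $T$-periodic function over any interval of length $T$ equals its integral over $[0,T]$, one gets $g_j(t,\theta+T)=g_j(t,\theta)+\int_\theta^{\theta+T}u_j(t,r)\,dr=g_j(t,\theta)+\int_0^T u_j(t,r)\,dr=g_j(t,\theta)$. So $g_j(t,\cdot)$ is $T$-periodic, and as $u_i(t,\cdot)$ is $T$-periodic too, so is $\tilde{u}_{ij}(t,\cdot)$. Besides proving claim 1, this is the workhorse: it reduces the two remaining estimates to $\theta\in[0,T]$.

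Next, for claim 2, periodicity gives $\sup_{\theta\in\mathbb{R}}|g_j(t,\theta)|=\sup_{\theta\in[0,T]}|g_j(t,\theta)|\le\sup_{\theta\in[0,T]}\int_0^\theta|u_j(t,r)|\,dr\le M_jT$, so with $|u_i(t,\theta)|\le M_i$ I would set $M_{ij}:=M_iM_jT\in(0,\infty)$. For claim 3 I would add and subtract $u_i(t_1,\theta)\,g_j(t_2,\theta)$ to write
\[
\tilde{u}_{ij}(t_1,\theta)-\tilde{u}_{ij}(t_2,\theta)=u_i(t_1,\theta)\bigl(g_j(t_1,\theta)-g_j(t_2,\theta)\bigr)+\bigl(u_i(t_1,\theta)-u_i(t_2,\theta)\bigr)g_j(t_2,\theta),
\]
and, since both $\tilde{u}_{ij}(t_1,\cdot)$ and $\tilde{u}_{ij}(t_2,\cdot)$ are $T$-periodic, it is enough to bound the left side for $\theta\in[0,T]$. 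There $|g_j(t_1,\theta)-g_j(t_2,\theta)|\le\int_0^\theta|u_j(t_1,r)-u_j(t_2,r)|\,dr\le L_jT|t_1-t_2|$ and $|g_j(t_2,\theta)|\le M_jT$, so combining with $|u_i(t_1,\theta)|\le M_i$ and $|u_i(t_1,\theta)-u_i(t_2,\theta)|\le L_i|t_1-t_2|$ yields the bound $(M_iL_j+L_iM_j)T|t_1-t_2|$ on $[0,T]$, hence everywhere by periodicity; I would take $L_{ij}:=(M_iL_j+L_iM_j)T$.

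I expect the only real obstacle to be the unboundedness of $\theta\in\mathbb{R}$: the naive estimate $\int_0^\theta|u_j(t_1,r)-u_j(t_2,r)|\,dr\le L_j|t_1-t_2|\,\theta$ blows up as $\theta\to\infty$, so a Lipschitz bound cannot be obtained uniformly in $\theta$ by brute force. The fix is exactly claim 1 --- periodicity of $\tilde{u}_{ij}(t,\cdot)$ --- which makes it legitimate to work on one period $[0,T]$; after that the rest is routine bookkeeping with the constants furnished by Assumptions A\ref{ass:a3} and A\ref{ass:a4}.
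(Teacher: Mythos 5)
Your proof is correct and follows essentially the same route as the paper: the same add-and-subtract decomposition for the Lipschitz estimate, the same constants $M_{ij}=M_iM_jT$ and $L_{ij}=(M_iL_j+L_iM_j)T$, and the same use of periodicity plus zero average of $u_j$ to keep the antiderivative bounded. The only (harmless) difference is organizational: you establish periodicity of $\int_0^\theta u_j(t,r)\,dr$ once and reuse it to restrict to $[0,T]$, whereas the paper re-runs the full-periods-cancel decomposition of $[0,\theta]$ separately inside claims 2 and 3.
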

\begin{proof}
To (1): Consider $\tilde{u}_{ij}(t,\theta + T)$.
Performing a change of variables $s=r-T$ and $ds=dr$ yields
\begin{equation}
\begin{split}
u_i&(t,\theta+T)\int_0^{\theta+T}\!\!\! u_j(t,r)dr\\
&= u_i(t,\theta)\int_{-T}^{\theta} u_j(t,s+T)ds,
\end{split}
\end{equation}
where we made use of $T$-periodicity of $u_i(t,\cdot)$ in Assumption A\ref{ass:a4}. Again, due to Assumption A\ref{ass:a4} $u_j(t,\cdot)$ has zero average and is $T$-perodic. Thus, the expression above yields 
\begin{equation}
\begin{split}
\underbrace{u_i(t,\theta)\int_{-T}^{0} u_j(t,s)ds}_{=0}+u_i(t,\theta)\int_0^{\theta} u_j(t,r)dr.
\end{split}
\end{equation}

To (2): Since $T\in(0,\infty)$ we can divide $[0,\theta]$ into $n\in\mathbb{N}_0$ pieces of length $T$ such that $\theta =Tn+\delta$ with $0\leq \delta < T$ being the leftover piece. Due to Assumption A\ref{ass:a4}, the first pieces are zero. Thus, we obtain
\begin{equation}
\begin{split}
|\tilde{u}_{ij}(t,\theta)| =&\; |\underbrace{u_i(t,\theta)\sum_{k=0}^{n-1}\int_{kT}^{(k+1)T} u_j(t,r)dr}_{=0}\\
&+u_i(t,\theta)\int_{nT}^{nT+\delta} u_j(t,r)dr|\\
\leq&\; M_iM_j(\theta-nT) \leq \underbrace{M_iM_jT}_{ =:M_{ij}},
\end{split}
\end{equation}
where the last step follows from Assumption A\ref{ass:a3}.

To (3): Using the definition of $\tilde{u}_{ij}$ in \eqref{eq:uijtilde} we can add and subtract the term $u_i(t_1,\theta)\int_0^{\theta} u_j(t_2,r)dr$ which yields
\begin{equation}\label{lemeq:2nd31}
\begin{split}
&|\tilde{u}_{ij}(t_1,\theta)-\tilde{u}_{ij}(t_2,\theta)|\\
=&\; |u_i(t_1,\theta)\int_0^{\theta} (u_j(t_1,r)-u_j(t_2,r)dr)\\
&\;+(u_i(t_1,\theta)-u_i(t_2,\theta))\int_0^{\theta} u_j(t_2,r)dr|. \\
\end{split}
\end{equation}
Since $T\in(0,\infty)$ we can divide $[0,\theta]$ into $n\in\mathbb{N}_0$ pieces of length $T$ such that $\theta =Tn+\delta$ with $0\leq \delta < T$ being the leftover piece. We obtain for the expression above
\begin{align}
\nonumber=&\; |u_i(t_1,\theta)\sum_{k=0}^{n-1}\int_{kT}^{(k+1)T} (u_j(t_1,r)-u_j(t_2,r)dr)\\
\nonumber&\;+u_i(t_1,\theta)\int_{nT}^{nT+\delta} (u_j(t_1,r)-u_j(t_2,r)dr)\\
\nonumber&\;+(u_i(t_1,\theta)-u_i(t_2,\theta))\sum_{k=0}^{n-1}\int_{kT}^{(k+1)T} u_j(t_2,r)dr\\
\label{lemeq:5}
&\;+(u_i(t_1,\theta)-u_i(t_2,\theta))\int_{nT}^{nT+\delta} u_j(t_2,r)dr|. 
\end{align}
The first and third line in \eqref{lemeq:5} sum up to zero due to Assumption A\ref{ass:a4}. Furthermore, due to Assumptions A\ref{ass:a3} we obtain
\begin{equation}\label{lemeq:2nd33}
\begin{split}
\leq &| u_i(t_1,\theta)|\int_{nT}^{nT+\delta} L_j|t_1-t_2|dr\\
&+L_i|t_1-t_2|\int_{nT}^{nT+\delta} |u_j(t_2,r)|dr\\
\leq &(M_iL_j+L_iM_j)\delta|t_1-t_2| \\
\leq& \underbrace{(M_iL_j+L_iM_j)T}_{=:L_{ij}}|t_1-t_2|.
\end{split}
\end{equation}
This was the last property we had to prove.
\end{proof}

\begin{lemma} 
\label{lem:average3}
Let $u_i, u_j:\mathbb{R}\times\mathbb{R}\to\mathbb{R}$ satisfy Assumptions A\ref{ass:a3} and A\ref{ass:a4}. Then there exist $k_1,k_2,k_3,k_4\in[0,\infty)$ such that the following inequality
\begin{align}
\nonumber&\biggl|\int_{t_0}^{t}\biggl(\omega u_i(\tau, \omega \tau)\int_{t_0}^{\tau}
u_j(s,\omega s)ds\\
\label{lemeq:6}&-\frac{1}{T} \int_0^{T}\biggl[u_i(\tau,\theta)\int_{0}^{\theta} u_j(\tau, r)dr\biggr]d\theta\biggr) d\tau\biggr| \\
\nonumber&\leq k_1\frac{(t-t_0)^2}{\omega}+k_2\frac{t-t_0}{\omega} +k_3\frac{1}{\omega}+k_4\frac{1}{\omega^2}+k_5\frac{1}{\omega^3}
\end{align}
is satisfied for all $t_0\in\mathbb{R}$ and all $t\in [t_0,\infty)$.
\end{lemma}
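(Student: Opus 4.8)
The plan is to peel off one antiderivative and reduce everything to the first-order averaging estimate of Lemma~\ref{lem:average1}, using Lemma~\ref{lem:average2} to control the product $\tilde u_{ij}$. Write $U_j(t,\theta):=\int_0^\theta u_j(t,r)\,dr$. Since $u_j(t,\cdot)$ is $T$-periodic with zero average (Assumption~A\ref{ass:a4}), the computations from the proof of Lemma~\ref{lem:average2} show that $U_j(t,\cdot)$ is $T$-periodic, that $|U_j(t,\theta)|\le M_jT$, and that $U_j$ is Lipschitz in its first argument with some constant $\bar L_j$; moreover $U_j$ is jointly Lipschitz, so $\tau\mapsto U_j(\tau,\omega\tau)$ is absolutely continuous with $\frac{d}{d\tau}U_j(\tau,\omega\tau)=\partial_tU_j(\tau,\omega\tau)+\omega\,u_j(\tau,\omega\tau)$ for almost every $\tau$, where $|\partial_tU_j|\le\bar L_j$ a.e.\ (the measurability and a.e.\ differentiation being handled as in the proofs of Lemmas~\ref{lem:average1} and~\ref{lem:average2}). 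Integrating this identity over $[t_0,\tau]$ gives
\begin{equation*}
\omega\int_{t_0}^{\tau}u_j(s,\omega s)\,ds=U_j(\tau,\omega\tau)-U_j(t_0,\omega t_0)-R(\tau),\qquad R(\tau):=\int_{t_0}^{\tau}\partial_tU_j(s,\omega s)\,ds ,
\end{equation*}
so that $R$ is Lipschitz with constant $\bar L_j$, $R(t_0)=0$, and $|R(\tau)|\le\bar L_j(\tau-t_0)$.

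Multiplying by $u_i(\tau,\omega\tau)$ and using $u_i(\tau,\omega\tau)\,U_j(\tau,\omega\tau)=\tilde u_{ij}(\tau,\omega\tau)$ from \eqref{eq:uijtilde}, the left-hand integrand of \eqref{lemeq:6} becomes $\tilde u_{ij}(\tau,\omega\tau)-U_j(t_0,\omega t_0)\,u_i(\tau,\omega\tau)-u_i(\tau,\omega\tau)\,R(\tau)$. Subtracting from the first term the constant $\frac1T\int_0^T\tilde u_{ij}(\tau,\theta)\,d\theta$, which is exactly the averaged term appearing in \eqref{lemeq:6}, the whole quantity in \eqref{lemeq:6} splits as $P_1+P_2+P_3$ with the obvious assignment.

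The term $P_1$ is controlled by Lemma~\ref{lem:average1}: by Lemma~\ref{lem:average2}, $\tilde u_{ij}$ is measurable, $T$-periodic in its second argument, bounded, and Lipschitz in its first argument, hence $|P_1|\le\bigl(a_1(t-t_0)+a_2\bigr)/\omega$. For $P_2$, since $u_i$ has zero average, Lemma~\ref{lem:average1} applied to $u_i$ gives $\bigl|\int_{t_0}^t u_i(\tau,\omega\tau)\,d\tau\bigr|\le\bigl(a_3(t-t_0)+a_4\bigr)/\omega$, and $|U_j(t_0,\omega t_0)|\le M_jT$ controls the prefactor. For $P_3$, put $\Phi(\tau):=\int_{t_0}^{\tau}u_i(\sigma,\omega\sigma)\,d\sigma$, which by the same estimate satisfies $|\Phi(\tau)|\le\bigl(a_3(\tau-t_0)+a_4\bigr)/\omega$; integrating by parts (legitimate since $\Phi$ is absolutely continuous and $R$ is Lipschitz),
\begin{equation*}
\int_{t_0}^{t}u_i(\tau,\omega\tau)\,R(\tau)\,d\tau=\Phi(t)R(t)-\int_{t_0}^{t}\Phi(\tau)\,R'(\tau)\,d\tau ,
\end{equation*}
and inserting $|R(t)|\le\bar L_j(t-t_0)$ and $|R'|\le\bar L_j$ bounds $P_3$ by a term of the shape $\bigl(c_1(t-t_0)^2+c_2(t-t_0)\bigr)/\omega$. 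Adding the three bounds yields the claim; the $1/\omega^2$ and $1/\omega^3$ slots in the statement are slack in this streamlined argument, but they do appear if one instead estimates $P_1,P_2,P_3$ through the explicit period-by-period splitting used inside Lemma~\ref{lem:average1}, where the partial periods at the endpoints of the nested integrals produce precisely such terms.

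The main obstacle is $P_3$. A naive pointwise bound $|u_i(\tau,\omega\tau)R(\tau)|\le M_i\bar L_j(\tau-t_0)$ only yields $O((t-t_0)^2)$, with no gain in $\omega$; the point is that, after extracting the antiderivative, the remainder $R$ is slowly varying (Lipschitz, not oscillatory), so pairing it with the zero-mean fast oscillation $u_i(\tau,\omega\tau)$ must buy back a factor $1/\omega$, and the integration by parts against $\Phi$ (equivalently, a second application of Lemma~\ref{lem:average1}) is what makes this explicit. A secondary point is that $u_i$ and $u_j$ are only measurable in the fast variable and only Lipschitz (not $C^1$) in the slow one, so the chain rule for $U_j(\tau,\omega\tau)$, the a.e.\ differentiability of $R$, and the integration by parts all have to be carried out at the level of absolutely continuous functions, exactly as in the proofs of Lemmas~\ref{lem:average1} and~\ref{lem:average2}.
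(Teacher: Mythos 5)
Your proof is correct, and it takes a genuinely different route from the paper's. Both arguments share the same first move: adding and subtracting $\tilde u_{ij}(\tau,\omega\tau)=u_i(\tau,\omega\tau)U_j(\tau,\omega\tau)$ so that the leading piece is handled by Lemma \ref{lem:average1} via the properties of $\tilde u_{ij}$ established in Lemma \ref{lem:average2}. They diverge on the remainder. The paper keeps the defect $\omega\int_{t_0}^{\tau}u_j(s,\omega s)\,ds-U_j(\tau,\omega\tau)$ inside the double integral and attacks it by an explicit period-by-period splitting ($R_1,R_2,R_3$, with changes of variables and the leftover partial periods producing the $1/\omega^2$ and $1/\omega^3$ slots). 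You instead identify that defect \emph{structurally}: up to the constant $U_j(t_0,\omega t_0)$ it is a slowly varying remainder $R(\tau)$ with $R(t_0)=0$ and Lipschitz constant $\bar L_j=L_jT$ (periodicity plus zero mean are what make this constant independent of $\theta$), and you then pair the fast zero-mean factor $u_i(\tau,\omega\tau)$ against it by integrating by parts against $\Phi(\tau)=\int_{t_0}^{\tau}u_i$, invoking Lemma \ref{lem:average1} a second time to get $|\Phi|=O(1/\omega)$. This is shorter and more conceptual, and it even yields the bound with $k_4=k_5=0$, which is of course consistent with the statement. The one place to be careful is your use of the chain rule $\frac{d}{d\tau}U_j(\tau,\omega\tau)=\partial_tU_j(\tau,\omega\tau)+\omega u_j(\tau,\omega\tau)$ for a function that is only Lipschitz in the slow variable and measurable in the fast one: the exceptional sets live in two different variables and the curve $\theta=\omega\tau$ has planar measure zero, so the pointwise a.e.\ identity is not automatic. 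You flag this, and it can be repaired without the chain rule: define $R(\tau):=U_j(\tau,\omega\tau)-U_j(t_0,\omega t_0)-\omega\int_{t_0}^{\tau}u_j(s,\omega s)\,ds$ directly and check from the two-term splitting $U_j(\tau_2,\omega\tau_2)-U_j(\tau_1,\omega\tau_1)=[U_j(\tau_2,\omega\tau_2)-U_j(\tau_1,\omega\tau_2)]+\int_{\omega\tau_1}^{\omega\tau_2}u_j(\tau_1,r)\,dr$ that $|R(\tau_2)-R(\tau_1)|\le\bar L_j|\tau_2-\tau_1|+\tfrac{\omega L_j}{2}(\tau_2-\tau_1)^2$, which together with continuity gives $R$ Lipschitz with constant $\bar L_j$ and $R(t_0)=0$ --- exactly the two facts your $P_3$ estimate needs. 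With that patch the argument is complete.
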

\begin{proof}
In order to use Lemma \ref{lem:average1} we add and subtract $\int_{t_0}^{t}\tilde{u}_{ij}(\tau, \omega \tau)d\tau=$  $\int_{t_0}^{t}( u_i(\tau, \omega \tau)\int_{0}^{\omega\tau}u_j(\tau,r)dr)d\tau$ (see \eqref{eq:uijtilde}) in the norm on the left hand-side of \eqref{lemeq:6}. Thus, it can be written as 
\begin{align}
\label{lemeq:10} \int_{t_0}^{t}\biggl( \tilde{u}_{ij}(\tau, \omega \tau)-\frac{1}{T} \int_0^{T}\tilde{u}_{ij}(\tau,\theta)d\theta\biggr) d\tau + R
\end{align}
with
\begin{equation}
\begin{split}
\label{lemeq:defR}
R:=\int_{t_0}^{t}&\biggl(\omega u_i(\tau, \omega \tau)\int_{ t_0}^{\tau} u_j(s,\omega s)ds\\
&- \tilde{u}_{ij}(\tau, \omega \tau)\biggr)d\tau.
\end{split}
\end{equation}
Due to Lemma \ref{lem:average2} the expression $\tilde{u}_{ij}$ in \eqref{lemeq:10} satisfies all assumptions needed in Lemma \ref{lem:average1} which can now be applied in order to establish the existence of $\tilde{k}_1,\tilde{k}_2\in[0,\infty)$ such that 
\begin{equation}
\begin{split}
\label{eq:upbound0}
\biggl|\int_{t_0}^{t}\biggl(\tilde{u}_{ij}\left(\tau,\omega\tau\right) -\frac{1}{T} \int_0^{
T}&\tilde{u}_{ij}\left(\tau,\theta\right) d\theta \biggr)d\tau\biggr|\\
&\leq \frac{\tilde{k}_1(t-t_0)+\tilde{k}_2}{\omega}.
\end{split}
\end{equation}
In the following we establish an upper bound for $R$. We first split up the integration interval in \eqref{lemeq:defR}, i.e. $\int_{0}^{\omega\tau}u_j(\tau,r)dr = \int_{0}^{\omega t_0}u_j(\tau,r)dr+\int_{\omega t_0}^{\omega\tau}u_j(\tau,r)dr$ and obtain
\begin{equation}
\begin{split}
R=&\int_{t_0}^{t}\biggl(\omega u_i(\tau, \omega \tau)\biggl[\int_{ t_0}^{\tau} u_j(s,\omega s)ds \\
&-\frac{1}{\omega}\int_{\omega t_0}^{\omega\tau}\!\!\!\!u_j(\tau,r)dr\biggr]\biggr)d\tau+R_1,
\end{split}
\end{equation}
where we introduced
\begin{equation}
\label{lemeq:defR1}
R_1:=-\int_{t_0}^{t}\biggl(u_i(\tau, \omega \tau)\int_{0}^{\omega t_0}\!\!\!\!u_j(\tau,r)dr\biggr)d\tau.
\end{equation}
By the changes of variables $p=\omega \tau$, $dp=\omega d\tau$ and $q=\omega s$, $dq=\omega ds$ we obtain
\begin{equation}
\begin{split}
R=&\frac{1}{\omega}\int_{\omega t_0}^{\omega t}\biggl(u_i(\frac{p}{\omega}, p)\biggl[\int_{\omega t_0}^{p} u_j(\frac{q}{\omega}, q)dq \\
&-\int_{\omega t_0}^{p}\!\!\!\!u_j(\frac{p}{\omega},r)dr\biggr]\biggr)dp +R_1.
\end{split}
\end{equation}
Since the integration intervals with respect to $r$ and $q$ are now equal, we combine the two inner integrals and introduce $I(q,p):=u_j(\frac{q}{\omega}, q)-u_j(\frac{p}{\omega},q)$. Furthermore, we divide $[\omega t_0,\omega t]$ into $n\in\mathbb{N}_{0}$ pieces of length $T$ such that $\omega (t-t_0) =Tn+\delta$ with $0\leq \delta < T$ being the leftover piece. Thus, we have
\begin{align}
\nonumber R=&\frac{1}{\omega}\sum_{k=0}^{n-1}\int_{\omega t_0+Tk}^{\omega t_0+ T(k+1)} \biggr[u_i(\frac{p}{\omega}, p)\int_{ \omega t_0}^{p} I(q,p)dq\biggl]dp\\
\nonumber&+\frac{1}{\omega}\int_{\omega t_0+Tn}^{\omega t_0+ Tn+\delta}\biggl[u_i(\frac{p}{\omega}, p)\int_{ \omega t_0}^{p}I(q,p)dq\biggr]dp\\
&+R_1.
\end{align}
For reasons which become clear later, we again split up the integration interval $\int_{ \omega t_0}^{p} I(q,p)dq = \int_{ \omega t_0}^{\omega t_0+Tk} I(q,p)dq+\int_{ \omega t_0+Tk}^{p} I(q,p)dq$, $k=1,\ldots,n$ and obtain
 $
R=R_1+R_2+R_3
$, 
where we define
\begin{align}
\nonumber \!\!\!\!\!R_2:=&\frac{1}{\omega}\sum_{k=0}^{n-1}\int_{\omega t_0+Tk}^{\omega t_0+ T(k+1)}\biggr[u_i(\frac{p}{\omega}, p)\!\!\int_{ \omega t_0}^{\omega t_0+Tk}\!\!\!\!\!\!\!\!\!\!\!\!\!\!\!\!I(q,p)dq\biggr]dp \\
&+\frac{1}{\omega}\int_{\omega t_0+Tn}^{\omega t_0+ Tn+\delta}\biggl[u_i(\frac{p}{\omega}, p)\!\!\int_{ \omega t_0}^{\omega t_0+Tn}\!\!\!\!\!\!\!\!\!\!\!\!\!\!\!\!  I(q,p)dq\biggr]dp
\end{align}
and 
\begin{align}
\nonumber\!\!\!\!\!R_3:=&\frac{1}{\omega}\sum_{k=0}^{n-1}\int_{\omega t_0+Tk}^{\omega t_0+ T(k+1)}\biggl[u_i(\frac{p}{\omega}, p)\!\!\int_{ \omega t_0+Tk}^{p}\!\!\!\!\!\!\!\!\!\!\!\!\!\!\!\! I(q,p)dq\biggr]dp\\
&+\frac{1}{\omega}\int_{\omega t_0+Tn}^{\omega t_0+ Tn+\delta}\biggl[u_i(\frac{p}{\omega}, p)\!\!\int_{ \omega t_0+Tn}^{p}\!\!\!\!\!\!\!\!\!\!\!\!\!\!\!\! I(q,p)dq\biggr]dp.
\end{align}
Each part is now treated separately. 

For $R_1$ we split up the second integration interval $[0,\omega t_0]$ by introducing $l\in\mathbb{N}_0$ such that $\omega t_0=Tl+\epsilon$ with $0\leq \epsilon < T$ being the left-over piece. We know from Assumption A\ref{ass:a4} that $u_j(t,\cdot)$ has zero average.  Thus \eqref{lemeq:defR1} simplifies to
\begin{equation}
\begin{split}
R_1&=-\int_{t_0}^{t}\biggl(u_i\left(\tau,\omega \tau\right)\int_{Tl}^{Tl+\epsilon}u_j\left(\tau, s\right)ds\biggr)d\tau\\
\end{split}
\end{equation}
and with Assumption A\ref{ass:a3}, i.e. $|\int_{Tl}^{Tl+\epsilon}u_j\left(\tau, s\right)ds| \leq M_j\epsilon$ the expression $\bar{u}_{ij}(\tau,\theta):=u_i(\tau,\theta)\int_{Tl}^{Tl+\epsilon}u_j(\tau, s)ds$ is bounded, $\bar{u}_{ij}(\tau,\cdot)$ is $T$-periodic with zero mean and $\bar{u}_{ij}(\cdot,\theta)$ is Lipschitz continuous which follows from the same reasoning as in the proof of part (3) of Lemma \ref{lem:average2} (i.e. \eqref{lemeq:2nd31} to \eqref{lemeq:2nd33}). Thus it satisfies all assumptions of Lemma \ref{lem:average1}. We conclude with the first statement of Lemma \ref{lem:average1} that there exist $\tilde{k}_3,\tilde{k}_4\in[0,\infty)$ such that 
\begin{equation}
\label{eq:upboundR1}
|R_1|\leq\frac{\tilde{k}_3(t-t_0)+\tilde{k}_4}{\omega}.
\end{equation}

We now turn to $R_2$. Since $u_j(t,\cdot)$ is $T$-periodic with zero mean, we have that $\int_{\omega t_0}^{\omega t_0+Tk}u_j(\frac{p}{\omega},q)dq=0$ and therefore $\int_{ \omega t_0}^{\omega t_0+Tk} I(q,p)dq = \int_{\omega t_0}^{\omega t_0+T k}u_j\left(\frac{q}{\omega}, q\right)dq$ $k=1,\ldots,n$. The crucial point now is that this integral does not depend on $p$ anymore. Thus the expression $R_2$ can be written as
\begin{align}
\nonumber \!\!\!\!R_2=&\frac{1}{\omega} \sum_{k=0}^{n-1} \int_{\omega t_0+Tk}^{\omega t_0+T(k+1)}\!\!\!\!\!\!\!\! u_i(\frac{p}{\omega},p)dp\int_{\omega t_0}^{\omega t_0+T k}\!\!\!\!\!\!\!\!u_j(\frac{q}{\omega}, q)dq \\ 
&+\frac{1}{\omega}\int_{\omega t_0+Tn}^{\omega t_0+ Tn+\delta}\!\!\!\!\!\!\!\! u_i(\frac{p}{\omega}, p)dp\int_{ \omega t_0}^{\omega t_0+Tn}\!\!\!\!\!\!\!\!u_j(\frac{q}{\omega}, q)dq.
\end{align}
  Substituting $r=\frac{p}{\omega}$, $dr=\frac{dp}{\omega}$ and $s=\frac{q}{\omega}$,  $ds=\frac{dq}{\omega}$ yields
\begin{align}
\nonumber \!\!\!\! R_2=&\;\omega\sum_{k=0}^{n-1} \int_{ t_0+\frac{Tk}{\omega}}^{ t_0+\frac{T(k+1)}{\omega}}\!\!\!\!\!\!\!\! u_i\left(r,\omega r\right)dr\int_{t_0}^{t_0+\frac{T k}{\omega}}\!\!\!\!\!\!u_j\left(s, \omega s\right)ds\\
\label{eq:R2integrands}&+\omega\int_{ t_0+\frac{Tn}{\omega}}^{ t_0+\frac{Tn+\delta}{\omega}}\!\!\!\!\!\!\!\! u_i\left(r,\omega r\right)dr\int_{t_0}^{t_0+\frac{T n}{\omega}}\!\!\!\!\!\!u_j\left(s, \omega s\right)ds.
\end{align}
We now treat each integral in \eqref{eq:R2integrands} separately. Since $u_i$ is bounded by $M_i\in(0,\infty)$ we can upper bound the third integral and since both $u_i,u_j$ satisfy the conditions of Lemma \ref{lem:average1} and $\omega(t_0+\frac{T(k+1)}{\omega}-t_0-\frac{Tk}{\omega})=T$ as well as $\omega(t_0+\frac{Tk}{\omega}-t_0)=Tk$, $k=1,\ldots,n$, we obtain for the first, second and fourth integral with the second statement of Lemma \ref{lem:average1} that there exist $\tilde{k}_5,\tilde{k}_6,\tilde{k}_7\in[0,\infty)$ such that
\begin{align}
\nonumber\!\!\!\!\!|R_2|&\leq \omega\sum_{k=0}^{n-1}\frac{\tilde{k}_5 T}{\omega^2}\frac{\tilde{k}_6 Tk}{\omega^2}+M_i\delta\frac{\tilde{k}_7Tn}{\omega^2}\\
\label{eq:upboundR2}&\leq \tilde{k}_5\tilde{k}_6\left(\frac{(t-t_0)^2}{\omega}+\frac{\delta^2}{\omega^3}\right)+\frac{\tilde{k}_7M_iT(t-t_0)}{\omega},
\end{align}
where we have made use of $0\leq\delta<T$ and the definition of $n=\frac{\omega (t-t_0)-\delta}{T}$ above. 

For $R_3$ we proceed as follows. Note that due to Assumption A\ref{ass:a3} we have that $|I(q,p)|\leq \frac{L_j}{\omega}|q-p|$ and furthermore, $|u_i(t,\theta)|\leq M_i$, for all $t,\theta\in\mathbb{R}$ and all $i,j=1,\ldots,m$.  Thus, we obtain for $|R_3|$
\begin{align}
\nonumber\!\!\!\!|R_3|\leq &\;\frac{1}{\omega} \sum_{k=0}^{n-1} \int_{\omega t_0+Tk}^{\omega t_0+T(k+1)}\!\!\!\!\!\!\!\!\!\! M_i\int_{\omega t_0+Tk}^{p}\frac{L_j}{\omega}|q-p|dq dp \\
&+\;\frac{1}{\omega} \int_{\omega t_0+Tn}^{
\omega t_0+Tn+\delta}\!\!\!\!\!\!\!\!\!\! M_i\int_{\omega t_0+Tn}^{p}\frac{L_j}{\omega}|q-p|dq dp.
\end{align}
The crucial point now is that the lower integration limits of both integrations are equal. One can verify that after the substitutions $s=q-\omega t_0-Tk$, $ds=dq$ and $r=p-\omega t_0-Tk$, $dr=dp$, $k=1,\ldots,n$ we obtain
\begin{align}
\nonumber|R_3|\leq &\;\frac{M_iL_j}{\omega^2} \Biggl(\sum_{k=0}^{n-1} \int_0^{
T}\!\!\! \int_{0}^r\!\! |s-r|dsdr+\!  \int_0^{\delta}\!\!\! \int_{0}^r\!\! |s-r|dsdr\Biggr) \\ 
=&\frac{M_iL_j}{6\omega^2}\left(T^3n+\delta^3\right).
\end{align}
Using the definition of $n=\frac{\omega (t-t_0)-\delta}{T}$ above, we obtain
\begin{equation}
\label{eq:upboundR3}
\begin{split}
|R_3|\leq \frac{T^2M_iL_j(t-t_0)}{6\omega}+\frac{M_iL_jT^3}{6\omega^2},
\end{split}
\end{equation}
where we have used $0<\delta\leq T$. With $R=R_1+R_2+R_3$ in \eqref{lemeq:defR}, \eqref{eq:upbound0}, \eqref{eq:upboundR1}, \eqref{eq:upboundR2} and \eqref{eq:upboundR3} we obtain the desired upper bound for the left hand-side of \eqref{lemeq:6} with $k_1=\tilde{k}_5\tilde{k}_6$, $k_2=\tilde{k}_1+\tilde{k}_3+\frac{T^2M_iL_j}{6}+\tilde{k}_7M_iT$, $k_3=\tilde{k}_2+\tilde{k}_4$, $k_4 = \frac{M_iL_jT^3}{6\omega^2}$ and $k_5=\tilde{k}_5\tilde{k}_6T^2$.
\end{proof}
\section{Proof of Theorem \ref{thm:traj_approx}}
\label{pf:traj_approx}

Consider the vector field $f_\omega(t,x)=b_0(t,x)$   $+\sum_{i=1}^{m}b_{i}(t,x)\sqrt{\omega}u_{i}(t,\omega t)$ in \eqref{eq:input_affine_system} and note that due to Assumptions A\ref{ass:a1} and A\ref{ass:a3} $f_\omega(t,\cdot)$ is continuously differentiable and $f_\omega(\cdot,x)$ is measurable. Furthermore, with Assumption A\ref{ass:a2} we have that for every compact set $\mathcal{C}\subseteq\mathbb{R}^n$ and every $\omega\in(0,\infty)$ there exist $M,L\in[0,\infty)$ such that $|b_0(t,x)+\sum_{i=1}^{m}b_{i}(t,x)\sqrt{\omega}u_{i}(t,\omega t)|\leq M$ and such that $|b_0(t,x_1)+\sum_{i=1}^{m}b_{i}(t,x_1)\sqrt{\omega}u_{i}(t,\omega t) - b_0(t,x_2)-\sum_{i=1}^{m}b_{i}(t,x_2)\sqrt{\omega}u_{i}(t,\omega t)|\leq |b_0(t,x_1)-b_0(t,x_2)|+\sqrt{\omega}\sum_{i=1}^{m}M_i|(b_{i}(t,x_1)-b_{i}(t,x_2))|\leq L|x_1-x_2|$, $t\in\mathbb{R}, x, x_1,x_2\in\mathcal{C}$. We conclude with Theorem \ref{thm:exuni} in Appendix \ref{app:exuni}, for every $\omega\in(0,\infty)$, every $t_0\in\mathbb{R}$ and every $x_0\in \mathbb{R}^n$ there exist a $t_e\in(0,\infty)$ and a unique absolutely continuous solution $x$ of \eqref{eq:input_affine_system} such that
\begin{equation}\label{eq:C1}
\begin{split}
\!\!\!\!\!x(t) = x_0\! +\!\! \int_{t_0}^{t}b_0(\tau,x)\! +\! \sum_{i=1}^m b_i(\tau,x)\sqrt{\omega}u_i(\tau,\omega\tau) d\tau\!\!\!
\end{split}
\end{equation}
with $t\in[t_0,t_0+t_e)$ and $x_0=x(t_0)$. Since, $x(t)$ is absolutely continuous on $[t_0,t_0+t_e)$ we can perform a partial integration (see Thm. 4 on p. 266 in \cite{Nathanson:1964kx}) for each $b_i(\tau,x)\sqrt{\omega}u_i(\tau,\omega\tau)$, $i=1,\ldots,m$ with derivative $\frac{db_i(\tau,x)}{d\tau}=\frac{\partial b_i(\tau,x)}{\partial x}\dot{x}+\frac{\partial b_i(\tau,x)}{\partial \tau}$ almost everywhere and obtain
\begin{align}
\nonumber\!\!\!\!\!\!x(t)=&\;x_0\!+\!\int_{t_0}^{t}\biggl[b_0(\tau,x)\\
\nonumber &-\! \sqrt{\omega}\sum_{i=1}^m \biggl(\frac{\partial b_i(\tau,x)}{\partial x}\dot{x}+\frac{\partial b_i(\tau,x)}{\partial \tau}\biggr)U_i(t_0,\tau) \biggr]d\tau\\ \!\!\!\!
 &+\! \sqrt{\omega} \sum_{i=1}^m b_i(t,x(t))U_i(t_0,t)
\end{align}
with $U_i(t_0,t):=\int_{t_0}^tu_i(r,\omega r)dr$.
Since $\dot{x}(t)=b_0(t,x(t))+\sum_{i=1}^{m}b_{i}(t,x(t))\sqrt{\omega}u_{i}(t,\omega t)$ for almost all $t$, we obtain
\begin{align}
\nonumber x(t)=&\;x_0+\int_{t_0}^{t}\biggl[b_0(\tau,x)\!\\
\nonumber &- \omega\sum_{\substack{i,j=1}}^m \!\frac{\partial b_i(\tau,x)}{\partial x} b_j(\tau,x)u_j(\tau,\omega\tau)U_i(t_0,\tau)\biggr] d\tau\!\\
&+R_1+R_2,
\end{align}
where we introduced
\begin{align}
\nonumber R_1:=-&\sqrt{\omega}\int_{t_0}^{t}\biggl[\sum_{i=1}^m \biggl(\frac{\partial b_i(\tau,x)}{\partial x} b_0(\tau, x) \\
&+\sum_{i=1}^m \frac{\partial b_i(\tau,x)}{\partial\tau}\biggr)U_i(t_0,\tau)\biggr] d\tau\\
R_2:=& \sqrt{\omega} \sum_{i=1}^m b_i(t,x(t))U_i(t_0,t).
\end{align}
Adding and subtracting the expression 
$\omega\int_{t_0}^{t} \sum_{i=1}^m\sum_{j=i+1}^{m} \frac{\partial b_j(\tau,x)}{\partial x}
b_i(\tau,x)u_j(\tau,\omega\tau)U_i(t_0,\tau)d\tau$ yields
\begin{align}\label{eq:C6}
\nonumber x(t)=&\;x_0\!+ \int_{t_0}^{t}\biggl[b_0(\tau,x)\\
\nonumber&+ \omega \sum_{\substack{i=1\\j=i+1}}^m [b_i,b_j](\tau,x) u_j(\tau,\omega\tau)U_i(t_0,\tau)\biggr]d\tau\\
&+R_1+R_2+R_3+R_4
\end{align}
with
\begin{align}
\!\!\!\!R_3:=&-\omega\!\!\int_{t_0}^{t} \sum_{i=1}^m\frac{\partial b_i(\tau,x)}{\partial x} b_i(\tau,x)\frac{1}{2}\frac{\partial U_i(t_0,\tau)^2}{\partial \tau}d\tau\!\!\!\\
\nonumber \!\!\!\!R_4:=&-\omega\!\!\int_{t_0}^{t}\sum_{i=1}^m\sum_{j=1}^{i-1}  \frac{\partial b_i(\tau,x)}{\partial x} b_j(\tau,x)\\
\label{eq:C8}&\;\;\;\;\;\;\;\cdot\frac{\partial U_i(t_0,\tau)U_j(t_0,\tau)}{\partial \tau}d\tau
\end{align}
and by using $\frac{\partial U_i(t_0,\tau)U_j(t_0,\tau)}{\partial \tau} = u_i(\tau,\omega\tau)U_j(t_0,\tau)+ u_j(\tau,\omega\tau) U_i(t_0,\tau)$ for almost all $\tau$, $i=1,\ldots,m$, $j=1,\ldots,m$. Note that, $R_3$ and $R_4$ contain the rest terms after relabeling the indices. Furthermore, $R_3$ contains the terms where $i=j$, which is treated as a special case. 

We now turn to \eqref{eq:liebracket_system}. By assumption, the solution $z:\mathbb{R}\to\mathbb{R}^n$ of \eqref{eq:liebracket_system} exists and $z(t)$ is bounded for $t=[t_0,\infty)$ and for all $z(t_0)=z_0\in \mathcal{B}$. Thus, $z(t)$ that can be written as
\begin{equation}
\begin{split}
\!\!\!\!\!z(t) = z_0\! +\!\! \int_{t_0}^{t}\!\!b_0(\tau,z)+\! \sum_{\substack{i=1\\j=i+1}}^m[b_i,b_j](\tau,z)\nu_{ji}(\tau)d\tau\!\!\!\!
\end{split}
\end{equation}
with $t\in[t_0,\infty)$, $z(t_0)=z_0$ and $\nu_{ji}(t)$ as defined in \eqref{eq:liebracket_system}.

In the following, we show that the distance between $x(t)$ and $z(t)$ with $z(t_0)=x(t_0)=x_0$ can be made arbitrary small on a finite time interval with $\omega$ chosen sufficiently large. Choose $z(t_0)=x(t_0)=x_0\in \mathcal{K}$ and since $\mathcal{K}\subseteq\mathcal{B}$ is bounded and since solutions initialized in $\mathcal{B}$ stay uniformly bounded, there exists a bounded set $\mathcal{M}\subseteq\mathbb{R}^n$ such that for all $t_0\in\mathbb{R}$ and all $z(t_0)\in\mathcal{K}$ we have $z(t) \in \mathcal{M}$, $t\in[t_0,\infty)$. 
Define a tubular set around $z(t)$, i.e. $\mathcal{O}(t) =\{x\in\mathbb{R}^n: |x-z(t)|\leq D\}$, $t\in[t_0,\infty)$. 
We now consider the case, where we assume that there exists a time $t_D(t_0,x_0,\omega)$ with $0< t_D(t_0,x_0,\omega)<\bar{t}_e$ such that $x(t)=x(t;t_0,x_0,\omega)$ leaves $\mathcal{O}(t)$ at $t_0+t_D(t_0,x_0,\omega)$ and with $\bar{t}_e$ the maximal time of existence of $x(t)$. The trivial case is given, when $x(t)\in\mathcal{O}(t)$ for all $t\in[t_0,\infty)$. 

Let $t_f\in(0,\infty)$ be given. We now show that there exists an $\omega_0\in(0,\infty)$ such that for every $\omega\in[\omega_0,\infty)$, every $t_0\in\mathbb{R}$ and every $x_0\in\mathcal{K}$ we have $t_D:=t_D(t_0,x_0,\omega)\geq t_f$. Suppose for the sake of contradiction that there exists a $t_0\in\mathbb{R}$ and an $x_0\in\mathcal{K}$ such that for all $\omega_0\in(0,\infty)$ we have that there exists an $\omega\in (\omega_0,\infty)$ such that $t_D(t_0,x_0,\omega)< t_f$.  

\begin{figure}[htbp]
\centering
\includegraphics[width=180pt]{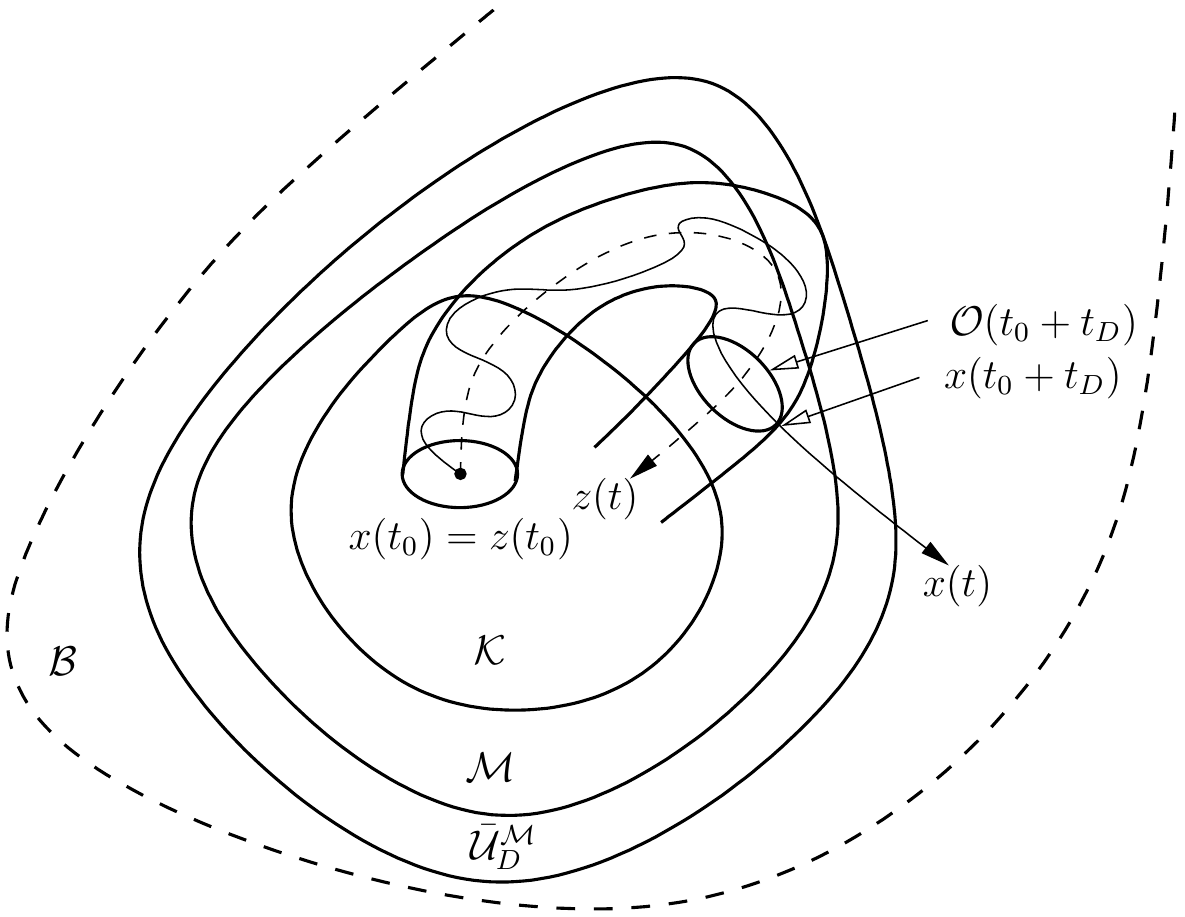}
\caption{$x(t)$ stays in $\bar{\mathcal{U}}^\mathcal{M}_D$ for all $t\in[t_0,t_0+t_D]$}
\label{fig:proofsets}
\end{figure}


Consider the distance between $x(t)$ and $z(t)$ through $z(t_0)=x(t_0)$ for $t\in[t_0,t_0+t_D]$. We add and subtract the expression $\int_{t_0}^{t}[b_i,b_j](\tau,x)\nu_{ji}(\tau)d\tau$ and obtain
\begin{align} 
\nonumber x(t)-z(t)=& \int_{t_0}^{t}b_0(\tau,x)-b_0(\tau,z)\\
\nonumber&+\!\!\!\!\sum_{\substack{i=1\\j=i+1}}^m\!\!\!\biggl( [b_i,b_j](\tau,x) - [b_i,b_j](\tau,z)\biggr)\nu_{ji}(\tau)d\tau\\
\label{eq:diff}&+ R_1+R_2+R_3+R_4+R_5
\end{align}
with
\begin{equation}
\begin{split}
R_5 :=\sum_{i=1}^m\sum_{j=i+1}^m\int_{t_0}^{t} [b_i,b_j](\tau,x) V_{ji}(\tau,\omega\tau)d\tau.
\end{split}
\end{equation}
and $V_{ji}(\tau,\omega\tau)= \omega u_j(\tau,\omega\tau)U_i(t_0,\tau)-\nu_{ji}(\tau)$.

Suppose for the moment that there exist $k\in[0,\infty)$ and $\omega^*_0\in(0,\infty)$ such that for every $\omega\in(\omega^*_0,\infty)$, every $t_0\in\mathbb{R}$ and every $x_0\in\mathcal{K}$ we have $\sum_{i=1}^5|R_i|\leq \frac{k}{\sqrt{\omega}}$, $t\in[t_0,t_0+t_D]$. Note that $x(t),z(t)\in \bar{\mathcal{U}}^\mathcal{M}_D$, $t\in[t_0,t_0+t_D]$ (see Fig. \ref{fig:proofsets}) and note that with Assumption A\ref{ass:a3} we have that $|\nu_{ji}(\tau)|\leq\frac{1}{T}\int_0^T|u_j(\tau,\theta)\int_0^\theta u_i(\tau,s)ds d\theta|\leq \frac{1}{2}M_jM_iT$. Thus, with Assumption A\ref{ass:a1} we have for the compact set $\bar{\mathcal{U}}^\mathcal{M}_D$ that there exists an $L\in(0,\infty)$ such that $|b_0(\tau,x)-b_0(\tau,z)+\sum_{\substack{i=1\\j=i+1}}^m( [b_i,b_j](\tau,x) - [b_i,b_j](\tau,z))\nu_{ji}(\tau)|\leq L|x(\tau)-z(\tau)|$ and therefore
\begin{equation}
\begin{split}
|x(t)-z(t)| \leq& \int_{t_0}^{t}L|x(\tau)-z(\tau)|d\tau+ \frac{k}{\sqrt{\omega}}
\end{split}
\end{equation}
with $x(t),z(t)\in\bar{\mathcal{U}}^\mathcal{M}_D$, $t\in[t_0,t_0+t_D]$ and $\omega\in(\omega_0^*,\infty)$. Using the Lemma of Gronwall-Bellman we obtain
\begin{equation}
\label{eq:abstand}
\begin{split}
|x(t)-z(t)| \leq& \frac{k}{\sqrt{\omega}}e^{L(t-t_0)}, t\in[t_0,t_0+t_D].
\end{split}
\end{equation}
Choose now  $\omega_0 = \max\{ \frac{4k^2e^{2Lt_f}}{D^2}, \omega^*_0\}$, which is \emph{independent} of $t_0\in\mathbb{R}$ and $x_0\in\mathcal{K}$. 
Now suppose that $t_D<t_f$, but since for \emph{every} $\omega\in(\omega_0,\infty)$, \emph{every} $t_0\in\mathbb{R}$ and \emph{every} $x_0\in\mathcal{K}$ we have with \eqref{eq:abstand} that $|x(t)-z(t)|<D$, $t\in[t_0,t_0+{t}_D]$, thus ${t}_D$ can not be the time, when $x(t)$ leaves $\mathcal{O}(t)$ which contradicts $t_D<t_f$. 
Furthermore, since $\omega_0$ is independent of $t_0,x_0$, the estimate holds for \emph{every} $t_0\in\mathbb{R}$ and \emph{every} $x_0\in\mathcal{K}$. Thus, we conclude that for every bounded set $\mathcal{K}\subseteq \mathcal{B}$, for every $D\in(0,\infty)$ and every $t_f\in(0,\infty)$ there exists an $\omega_0\in(0,\infty)$ such that for every $\omega\in(\omega_0,\infty)$, for every $t_0\in\mathbb{R}$ and for every $x_0\in\mathcal{K}$ there exist solutions $x$ and $z$ through $x(t_0)=z(t_0)=x_0$ which satisfy $|x(t)-z(t)|< D$, $t\in[t_0,t_0+t_f]$.

It remains to show that there exist $k\in[0,\infty)$ and $\omega^*_0\in(0,\infty)$ such that for every $\omega\in(\omega^*_0,\infty)$, every $t_0\in\mathbb{R}$ and every $x_0\in\mathcal{K}$ we have $\sum_{i=1}^5|R_i|\leq \frac{k}{\sqrt{\omega}}$, $t\in[t_0,t_0+t_D]$. 
Following the same lines as in \cite{Moreau:2003fk} the expressions $|R_i|, i=1,\ldots,5$ decay uniformly to zero with $\omega\to\infty$ on compact sets. 
Due to space limitations, this is shown only for $R_5$. The procedure is similar for $R_1$ to $R_4$. 

Note that for every $x_0\in\mathcal{K}$ we have that $x(t)\in\bar{\mathcal{U}}^\mathcal{M}_D$, $t\in[t_0,t_0+t_D]$. Due to Assumption A\ref{ass:a1}, the vector fields $b_i$, $i=1,\ldots,m$ are twice continuously differentiable and thus we can perform a partial integration which yields for $R_5$
\begin{align} 
\nonumber R_5=&\sum_{\substack{i=1\\j=i+1}}^m[b_i,b_j](t,x) \int_{t_0}^{t}V_{ji}( \tau,\omega\tau)d\tau\\
\nonumber&- \int_{t_0}^{t}\biggl[\biggl(\frac{\partial[b_i,b_j](\tau,x)}{\partial x}\dot{x}\\
\label{lemeq:11}
&\;\;\;\;\;\;+\!\frac{\partial[b_i,b_j](\tau,x)}{\partial \tau}\biggr)\!\!\int_{t_0}^\tau\!\!\! V_{ji}(\theta,\omega\theta)d\theta\biggr] d\tau.
\end{align}
Substituting $\dot{x}(\tau)=b_0(\tau,x(\tau))+\sum_{i=1}^{m}$ $b_{i}(\tau,x(\tau))\sqrt{\omega}$ $u_{i}(\tau,\omega \tau)$ yields
\begin{align}
\nonumber &R_5=\sum_{\substack{i=1\\j=i+1}}^m[b_i,b_j](t,x) \int_{t_0}^{t}V_{ji}( \tau,\omega\tau)d\tau\\
\nonumber&\!-\!\!\!\int_{t_0}^{t}\biggl[\biggl(\frac{\partial[b_i,b_j](\tau,x)}{\partial x}\biggl(b_0(\tau,x)+\!\!\sum_{i=1}^{m}b_{i}(\tau,x)\sqrt{\omega}u_{i}(\tau,\omega \tau)\!\biggr)\\
&\;\;\;+\frac{\partial[b_i,b_j](\tau,x)}{\partial \tau}\biggr)\int_{t_0}^\tau V_{ji}(\theta,\omega\theta)d\theta\biggr] d\tau.
\end{align}

Due to Assumptions A\ref{ass:a1}, A\ref{ass:a2} and A\ref{ass:a3} there exist for $\bar{\mathcal{U}}^\mathcal{M}_D$ constants $C_1, \ldots, C_4 \in[0,\infty)$ such that $|[b_i,b_j](t,x)|\leq C_1$, $|\frac{\partial[b_i,b_j](\tau,x)}{\partial x}|\leq C_2$, $|b_0(\tau,x)+\frac{\partial[b_i,b_j](\tau,x)}{\partial \tau}|\leq C_3$ and $|\sum_{i=1}^{m}b_{i}(\tau,x) u_{i}(\tau,\omega \tau)|\leq C_4$ for every $t, \tau \in\mathbb{R}$ and every $x\in\bar{\mathcal{U}}^\mathcal{M}_D$. This yields 
\begin{align}
\nonumber |R_5|\leq&\;\sum_{\substack{i=1\\j=i+1}}^m C_1\biggl|\int_{t_0}^{t}V_{ji}(\tau,\omega\tau)d\tau\biggr|\\
&+\!\!\int_{t_0}^{t}\!\!\!C_2(C_3+\sqrt{\omega}C_4)\biggl|\int_{t_0}^\tau\!\!\! V_{ji}(\theta,\omega\theta)d\theta \biggr|d\tau.
\end{align}
Furthermore, the functions $u_i$, $i=1,\ldots,m$ satisfy the assumptions of Lemma \ref{lem:average3} and thus, there exist $k^{ji}_1,k^{ji}_2,k^{ji}_3,k^{ji}_4,k^{ji}_5\in[0,\infty)$ such that $|\int_{t_0}^t V_{ji}(\tau,\omega\tau)d\tau|\leq k^{ji}_1\frac{(t-t_0)^2}{\omega}+k^{ji}_2\frac{t-t_0}{\omega} +k^{ji}_3\frac{1}{\omega}+k^{ji}_4\frac{1}{\omega^2}+k^{ji}_5\frac{1}{\omega^3}$ and also $\int_{t_0}^{t}|\int_{t_0}^\tau V_{ji}(\theta,\omega\theta)d\theta|d\tau\leq k^{ji}_1\frac{(t-t_0)^3}{3\omega}+k^{ji}_2\frac{(t-t_0)^2}{2\omega} +k^{ji}_3\frac{(t-t_0)}{\omega}+k^{ji}_4\frac{(t-t_0)}{\omega^2}+k^{ji}_5\frac{(t-t_0)}{\omega^3}$. 
From these estimates it becomes clear that there exist $k_{0,5}\in[0,\infty)$ and $\omega_{0,5}\in(0,\infty)$ such that for every $\omega\in(\omega_{0,5},\infty)$, $t_0\in\mathbb{R}$ and every $x_0\in\mathcal{K}$ we have $|R_5|\leq \frac{k_{0,5}}{\sqrt{\omega}}$, $t\in[t_0,t_0+t_D]$.

Estimates for $R_1$ and $R_2$ follow immediately from Assumptions A\ref{ass:a1} to A\ref{ass:a4} and Lemma \ref{lem:average1}. For the expressions $R_3$ and $R_4$ a partial integration and Lemma \ref{lem:average1} yields a similar result. Thus, there exist $k_{0,i},\omega_0^i$, such that $|R_i|\leq\frac{k_{0,i}}{\sqrt{\omega}}$, $\omega\in(\omega_{0,i},\infty)$, $i=1,\ldots,5$ respectively. 
Summarizing, there exist $k=5\max_i\{k_{0,i}\}$ and $\omega^*_0=\max_i\{\omega_{0,i}\}$ such that for all $\omega\in(\omega^*_0,\infty)$, every $t_0\in\mathbb{R}$ and every $x_0\in\mathcal{K}$ we have 
\begin{equation}
\label{eq:upperboundRi}
\sum_{i=1}^5|R_i|\leq \frac{k}{\sqrt{\omega}}, t\in[t_0,t_0+t_D].
\end{equation}

\section{Proof of Theorem \ref{thm:liesystem_loc}}
\label{pf:liesystem_loc}
The proof follows the same argumentation as in \cite{871771} but
extends it to the stability of a compact set.

\textbf{Practical uniform stability} We show now that $\mathcal{S}$ is practically uniformly stable for \eqref{eq:input_affine_system}, see Definition 1. First, since the set $\mathcal{S}$ is locally uniformly asymptotically stable for \eqref{eq:liebracket_system} there exists a $\delta_1\in(0,\infty)$ such that $\mathcal{S}$ is $\delta_1$-uniformly attractive for \eqref{eq:liebracket_system}. Take an arbitrary $\epsilon\in(0,\infty)$ and let $B_{1} \in (0,\epsilon)$.  Since $\mathcal{S}$ is uniformly stable for \eqref{eq:liebracket_system}, there exists a $\delta \in (0,\delta_1)$ 
such that for all $t_{0} \in \mathbb{R}$
\begin{equation}
\label{eq:1}
\begin{split}
z(t_0) \in \mathcal{U}_{\delta}^\mathcal{S} \Rightarrow z(t)\in \mathcal{U}_{B_1}^\mathcal{S}, \; t \in [t_{0},\infty). \\
\end{split}
\end{equation}
Second observe that, since the set $\mathcal{S}$ is $\delta_1$-uniformly attractive for \eqref{eq:liebracket_system} 
and $\delta\in(0,\delta_1)$ we have that for every $B_{2} \in (0,\delta)$ there exists a time $t_f \in (0, \infty)$ such that for all $t_{0}\in \mathbb{R}$
\begin{equation}
\label{eq:2} 
\begin{split}
z(t_0) \in \mathcal{U}_{\delta}^\mathcal{S} \Rightarrow z(t)\in \mathcal{U}_{B_2}^\mathcal{S},\; t \in [t_{0}+t_f, \infty).
\end{split}
\end{equation}
Let $D= \min\{\epsilon-B_{1},\delta-B_{2}
\}$, $\mathcal{B}=\mathcal{K}={\mathcal{U}}_\delta^\mathcal{S}$ and $t_f$ determined above. Because of \eqref{eq:1} the set $\mathcal{B}$ satisfies \eqref{eq:defB}. Due to Theorem 1, there exists an $\omega_{0}\in(0,\infty)$ such that for all $t_0\in\mathbb{R}$ and for all $\omega\in(\omega_{0},\infty)$ and all $x(t_0)\in \mathcal{K}$ we have that $|x(t)-z(t)|<D$, $t\in[t_0,t_0+t_f]$. 
This together with \eqref{eq:1} and \eqref{eq:2} yields for all $\omega\in (\omega_{0},\infty)$
\begin{equation}
\label{eq:4}
\begin{split}
x(t_0) &\in \mathcal{U}_\delta^\mathcal{S} \Rightarrow
 x(t)\in \mathcal{U}^\mathcal{S}_\epsilon, \\
 &t \in [t_{0}, t_{0}+t_f] \text{ and } x(t_{0}+t_f) \in \mathcal{U}^S_\delta.
\end{split}
\end{equation}
Since $x(t_{0}+t_f) \in \mathcal{U}^\mathcal{S}_\delta$ a repeated application of the procedure with another solution $z(t)$ of \eqref{eq:liebracket_system}  through $x(t_0+t_f)$ and the same choice of $D$, $\mathcal{K}$ and $t_f$ as above yields for all $t_{0} \in \mathbb{R}$ and for all $\omega\in (\omega_{0},\infty)$
\begin{equation}
\begin{split}
x(t_0) \in \mathcal{U}_\delta^\mathcal{S} \Rightarrow x(t)\in \mathcal{U}^\mathcal{S}_\epsilon, t \in [t_{0}, \infty).
\end{split}
\end{equation}

\textbf{Practical uniform attractivity}
We show now that there exists a $\delta\in(0,\infty)$ such that $\mathcal{S}$ is $\delta$-practically uniformly attractive for \eqref{eq:input_affine_system}, see Definition 2. Since the set $\mathcal{S}$ is locally uniformly asymptotically stable for \eqref{eq:liebracket_system} there exists a $\delta_1\in(0,\infty)$ such that $\mathcal{S}$ is $\delta_1$-uniformly attractive for \eqref{eq:liebracket_system}. Furthermore, by uniform stability there exists a ${\delta}_2\in(0,\infty)$ such that for all $t_0\in\mathbb{R}$ we have that
\begin{equation}
\label{eq:new1}
\begin{split}
z(t_0) \in \mathcal{U}_{{\delta}_2}^\mathcal{S} \Rightarrow z(t)\in \mathcal{U}_{\delta_1}^\mathcal{S}, \; t \in [t_{0},\infty).
\end{split}
\end{equation}
Choose some $\epsilon \in (0, \infty)$. 
By practical uniform stability proven above, there exist $B_3 \in (0, \infty)$ and $\omega_{0,1} \in (0,\infty)$ such that for all $t_{0} \in \mathbb{R}$ and for all $\omega \in (\omega_{0,1},\infty)$
\begin{equation}
\label{eq:6}
\begin{split}
x(t_0) \in \mathcal{U}^\mathcal{S}_{B_3} \Rightarrow x(t)\in \mathcal{U}^\mathcal{S}_\epsilon,  t \in [t_{0}, \infty).
\end{split}
\end{equation}
Let $B_4 \in (0, B_3)$ and $\delta \in (0,{\delta}_2)$. Note that $\delta<\delta_2\leq\delta_1$. Since the set $\mathcal{S}$ is $\delta_1$-uniformly attractive for \eqref{eq:liebracket_system}, there exists a $t_f \in (0, \infty)$ such that for all $t_{0} \in \mathbb{R}$
\begin{equation}
\begin{split}
\label{eq:7}
z(t_0) \in \mathcal{U}^\mathcal{S}_{\delta} \Rightarrow z(t)\in \mathcal{U}^\mathcal{S}_{B_4}, t \in [t_{0}+t_f,\infty).
\end{split}
\end{equation}
Let $D = B_3-B_{4}$, $\mathcal{B}=\mathcal{K}={\mathcal{U}}_\delta^\mathcal{S}$ and $t_f$ determined above. Because of \eqref{eq:new1} the set $\mathcal{B}$ satisfies \eqref{eq:defB}. Due to Theorem 1, there exists an $\omega_{0,2} \in  (0,\infty)$ such that for all $t_{0} \in \mathbb{R}$ and for all $\omega \in (\omega_{0,2},\infty)$ and all $x(t_0)\in \mathcal{K}$ we have that $|x(t) - z(t)| < D, \; t \in [t_{0},t_{0}+t_f]$. 
This estimate together with \eqref{eq:7} yield for all $t_{0} \in \mathbb{R}$ and for all $\omega \in (\omega_{0,2},\infty)$
\begin{equation}
\begin{split}
x(t_0) \in{\mathcal{U}}_\delta^\mathcal{S} \Rightarrow x(t_{0} + t_f) \in \mathcal{U}^\mathcal{S}_{B_3}.
\end{split}
\end{equation}
With \eqref{eq:6}, this leads for all $t_{0} \in \mathbb{R}$ and for all $\omega \in (\omega_{0,2},\infty)$ where $\omega_0 = \max\{\omega_{0,1},\omega_{0,2}\}$ to
\begin{equation}
\begin{split}
x(t_0)\in \mathcal{U}^\mathcal{S}_{\delta} \Rightarrow x(t)\in \mathcal{U}^\mathcal{S}_\epsilon, t \in [t_{0} + t_f, \infty).
\end{split}
\end{equation}
This is the last property we had to prove.

\end{appendix}

\bibliography{bibliography}

\end{document}